\documentclass{amsart}
\usepackage{graphicx,amssymb,epsfig,amsmath}
\usepackage{lipsum}
\usepackage{enumerate}
\usepackage[hidelinks]{hyperref}
\usepackage[all]{xy}

\usepackage{relsize}
\usepackage{xcolor}
\usepackage{amscd,amssymb,hyperref}
\usepackage[all]{xy}

\def\la#1{\hbox to #1pc{\leftarrowfill}}
\def\ra#1{\hbox to #1pc{\rightarrowfill}}

\theoremstyle{plain}
\newtheorem{prop}[subsection]{Proposition}
\newtheorem{thm}[subsection]{Theorem}
\newtheorem{lem}[subsection]{Lemma}
\newtheorem{rem}[subsection]{Remark}
\newtheorem{cor}[subsection]{Corollary}
\newtheorem{remark}[subsection]{Remark}
\newtheorem{defn}[subsection]{Definition\rm}

\newcommand{\bpr}{\begin{proposition}}
\newcommand{\epr}{\end{proposition}}
\newcommand{\bco}{\begin{corollary}}
\newcommand{\eco}{\end{corollary}}
\newcommand{\ble}{\begin{lem}}
\newcommand{\ele}{\end{lem}}
\newcommand{\bde}{\begin{definition}\rm}
\newcommand{\ede}{\end{definition}\rm}
\newcommand{\bre}{\begin{remark}\rm}
\newcommand{\ere}{\end{remark}}
\newcommand{\bex}{\begin{example}\rm}
\newcommand{\eex}{\end{example}}
\numberwithin{equation}{section}

\usepackage{tikz}

\newcommand{\map}{\hbox{Map}}
\newcommand{\bmap}{\hbox{Map}^*}

\def\hofib{\operatorname{hofib}}

\newcommand{\R}{{\mathbb R}}

\newcommand{\N}{{\mathbb N}}

\newcommand{\lra}{\longrightarrow}

\def\bbr{\mathbb R}
\def\lrar{\longrightarrow}

\title{Geodesic Models for Loop Spaces on Riemannian Manifolds}

\author[A.~Bahri]{Abbas~Bahri}

\author[F. R.~Cohen]{Frederick R.~Cohen}

\author[M.~Ghazel]{Moncef~Ghazel}
  \address{Facult\'e des Sciences de Tunis\\ Universit\'e Tunis, El Manar (Tunisia)}\email{moncef.ghazel@fst.utm.tn}
  
\author[S.~Kallel]{Sadok~Kallel}
  \address{American University of Sharjah (UAE) and Laboratoire Painlev\'e (France)}\email{sadok.kallel@univ-lille.fr}

\subjclass{Primary: 55R99, Secondary: 58D99}

\begin{document}

\begin{abstract}
A topological group, defined in terms of ``composable minimal geodesics'' is constructed and shown to be homotopy equivalent to the pointed loop space of a path-connected, simply connected Riemannian manifold \(M\) of positive convexity radius. This construction is analogous to J.~Milnor's free group model \cite{Milnor} for the pointed loop space of a connected simplicial complex. Related models built from composable minimal geodesics are also obtained for the free loop space of \(M\) and for the space of maps from any finite two-dimensional CW-complex into \(M\).
\end{abstract}
\maketitle

\section{Introduction}

The purpose of this article is to investigate the space of loops (both based and unbased) on a Riemannian manifold\footnote{Always Hausdorff, second countable and finite dimensional.}. This is of course a very old and classical problem that dates back to  Morse, Bott and Samelson  \cite{bott, Milnor, oancea}. In his remarkable book \cite{Milnor2}, Milnor  introduces spaces of piecewise or broken geodesics as finite dimensional approximations to spaces of smooth loops with bounded energy. More recently, these piecewise geodesics started appearing as Reeb chords on cotangent bundles. In the work of the first author \cite{Abbas}, such paths are formed by pieces of the Reeb orbits and pieces of the orbits of a transverse vector field (in the kernel of the contact form), the idea being to understand how the index of the Poincar\'e map breaks after each marked point of the broken path.  
Going beyond pure mathematics, piecewise geodesic paths and loops are increasingly used in connection with nowadays `machine learning', in the attempt to model biological and engineering systems evolving over time  along the geodesics of a ``learned latent Riemannian manifold'' \cite{burge}.

Small models\footnote{A space or a model is ``small'' if it is $\sigma$-compact (i.e. a countable union of compact spaces).} for loop spaces have been of great use in both algebraic topology and geometry, with a sample of applications in \cite{Abbas, smale, ali, gk, hess, Milnor2}. 
This paper provides a number of homotopy equivalent  models, or ``approximations'', for the loop space $\Omega M$ of a Riemannian manifold $M$ of positive convexity radius $\operatorname{conv} (M)>0$; the topology of the space of loops being the compact-open topology. One set of models is to consider the {subspace} of broken geodesic loops, and its even smaller subspace of \textit{uniform} broken geodesic loops, and verify that these are indeed approximations for $\Omega M$ (Theorem \ref{pgapprox}). The other set of models are combinatorial in nature and consist of finite configurations of ordered points in $M$ (the \textit{markings}) close enough to be related by unique minimal geodesic segments,  such that identical neighbors are identified, and in the case of the group model, identical second neighbors are identified. The collection of such classes of configurations can be assembled into a  colimit of finite-dimensional models that is then shown to be homotopy equivalent to the loop space. The end result is the construction of several $H$-space models of $\Omega M$ (Theorem \ref{pgapprox}, Theorem \ref{happrox}) and a group model (Theorem \ref{reducedapprox}). The precise meanings of an \textit{H-space model} (or $H$-model) and of a \textit{group model} are given in \S\ref{models}. 

 Throughout, all spaces are assumed to lie in $kTop$, the category of compactly generated spaces (see Appendix \ref{hspace}). All loops in this paper, except in Appendices \ref{ordinary} and \ref{holonomyappendix}, are \textit{Moore loops}. 
Theorem \ref{main} summarizes the main results of the paper. Before stating it, we briefly and informally describe the spaces  appearing in the theorem, referring the reader to the body of the paper for precise definitions.
A `broken geodesic' path is made out of a chain of minimal geodesic segments. The points of the path where consecutive segments join are the `markings'.  We write $\Omega^{bg}(M)\subset \Omega M$ the subspace of broken geodesic Moore loops, with the compact-open topology, and write $\Omega^{ubg}(M)$ its subspace of uniform broken geodesics. Uniformity means unit speed across all segments of the piecewise path or loop.

\begin{thm}\label{main}
Let $(M,x_0)$ be a based simply connected Riemannian manifold of positive convexity radius $\operatorname{conv}(M)>0$. Then:
\begin{itemize}
\item $\Omega^{bg}(M)$ is an $H$-space model for $\Omega M$. 
\item $\Omega^{ubg}(M)$ is a small $H$-model for $\Omega M$. 
\item The sequences of marked points produce a small $H$-model\footnote{This is a monoid object in \(k\mathbf{Top}\); equivalently, its multiplication is strictly associative and admits a strict two-sided unit.} $H_\infty(M)$ of $\Omega M$. It has a topological group quotient $G_\infty(M)$ which is a group object in $kTop$ and a group model for $\Omega M$.
\item All models fit into the following zigzag of  homotopy equivalences and monoid maps
$$\xymatrix{
G_\infty(M)&H_{\infty}(M)\ar[r]\ar[l]&\Omega^{ubg} (M)\ar@{^(->}[r]&\Omega^{bg}(M)\ar@{^(->}[r]&\Omega M}
$$
\end{itemize}
The isomorphism types of $H_\infty(M)$ and $G_\infty(M)$, as a topological monoid and a topological group, respectively, are invariants of the based Riemannian manifold $(M,x_0)$ up to basepoint-preserving Riemannian isometry.
\end{thm}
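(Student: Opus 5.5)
The plan is to assemble the theorem from the three results announced in the introduction, Theorem \ref{pgapprox}, Theorem \ref{happrox} and Theorem \ref{reducedapprox}, and to supply the common geometric input once. That input is the following: for $\epsilon<\operatorname{conv}(M)$, any two points at distance $<\epsilon$ are joined by a unique minimal geodesic, and this geodesic depends continuously on its endpoints.

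\textbf{Step 1: the two inclusions on the right.}
I would show that $\Omega^{ubg}(M)\hookrightarrow\Omega^{bg}(M)\hookrightarrow\Omega M$ are homotopy equivalences by a Milnor-type approximation.
\begin{itemize}
\item Work on the compact sublevel sets $K$ of $\Omega M$, say loops of bounded length parameter in a compact family. Uniform continuity gives a subdivision whose consecutive points are $\epsilon$-close.
\item Replace each small arc by the minimal geodesic between its endpoints. Then reparametrize to unit speed, which uses the Moore length freely.
\item The straight-line homotopy through the unique minimal geodesics, from $\gamma(t)$ to the geodesic point, gives a deformation of $K$ into $\Omega^{ubg}$. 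This deformation preserves $\Omega^{ubg}$ and $\Omega^{bg}$ up to homotopy.
\end{itemize}
Since all these spaces are in $kTop$ and homotopy groups are detected on compacta, Whitehead's theorem applies. It needs the CW-homotopy type of $\Omega M$, which holds by Milnor. So we get weak equivalences, and hence homotopy equivalences.

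The $H$-structure is Moore concatenation, which is strictly associative. Concatenation of uniform broken geodesics is again uniform broken geodesic, so the inclusions are monoid maps. Smallness of $\Omega^{ubg}$ follows by writing it as a union over $n$ and over total length $\le n$ of compact images of finite configuration spaces.

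\textbf{Step 2: the map $H_\infty(M)\to\Omega^{ubg}(M)$.}
Let $H_n$ be the space of sequences $(x_0,x_1,\dots,x_n,x_0)$ with consecutive points $\epsilon$-close, modulo deletion of repeated neighbours, and let $H_\infty$ be the colimit. Product is juxtaposition at $x_0$; it is strictly associative with unit the empty sequence.

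The map to $\Omega^{ubg}$ joins consecutive markings by unit-speed minimal geodesics. It is continuous because of the continuity of minimal geodesics in their endpoints. It respects the identifications, since a degenerate segment has length zero, and it is a monoid map.

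For an inverse up to homotopy I would use the subdivision of Step 1. On each compact piece, choose markings at times $k\delta$, where $\delta$ is fixed small relative to $\epsilon$ and the speed bound. Then show that the two composites are homotopic to the identity by sliding the marking points along the geodesics; inserting a point on a segment is homotopic to the identity in $H_\infty$. Smallness of $H_\infty$ is clear since each filtration stage is a quotient of a closed subset of $M^{n}$, and $M$ is $\sigma$-compact.

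\textbf{Step 3: the group quotient $G_\infty(M)$.}
Further identify $(\dots,a,b,a,\dots)\sim(\dots,a,\dots)$, i.e.\ cancellation of backtracking. Inverses are reversed sequences, so $G_\infty$ is a group, and the product remains continuous in $kTop$. The main obstacle is showing that $H_\infty\to G_\infty$ is a homotopy equivalence. I would imitate Milnor's $FK$ argument: the quotient only kills backtrackings, which are canonically null-homotopic via the contracting homotopy of $b$ back to $a$ along the minimal geodesic. The plan is to filter by word length and show each relative stage is a relative homeomorphism up to contractible identifications. Alternatively, one can compare classifying spaces, showing that both $BH_\infty$ and $BG_\infty$ are equivalent to $M$ using simple connectivity and a nerve of the $\epsilon$-ball cover. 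The delicate point is checking that reduced words vary continuously, i.e.\ that the quotient topology is well behaved.

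\textbf{Invariance.}
A basepoint-preserving isometry preserves distances, convexity radius and minimal geodesics. It therefore induces isomorphisms of $H_n$ and $G_n$ compatible with all the identifications and products.
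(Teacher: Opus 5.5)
Your treatment of the monoid structures, smallness and isometry invariance is fine. The homotopy-theoretic core, however, has genuine gaps, and the decisive one is in Step 3.

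\textbf{Step 3.} That $H_\infty\to G_\infty$ is a weak equivalence is the hardest part of the theorem, and neither of your sketches is an argument.
\begin{itemize}
\item Each backtrack $(\dots,a,b,a,\dots)$ is null-homotopic individually. But the quotient map is not evidently a fibration or quasifibration, so this says nothing about families.
\item ``Relative homeomorphism up to contractible identifications'' is never specified.
\item Proving $BG_\infty\simeq M$ amounts to producing a contractible free $G_\infty$-space over $M$, which is exactly what is missing.
\end{itemize}
The idea you need is the one Milnor uses for this very relation: the marked path spaces $PH_\infty$ and $PG_\infty$. These are tuples $(x_0,\dots,x_k)$ with $d_M(x_{i-1},x_i)\le\delta$ and free endpoint, modulo the respective relations, with endpoint projection to $M$. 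Over a convex ball of diameter $<\delta$ one lifts, resp.\ trivializes, by appending one geodesic segment; for $G_\infty$ this is $(g,x)\mapsto g\cdot e_p\cdot[p,x]$, with $e_p$ a fixed marked path to the centre $p$. So both projections are fibrations with fibres $H_\infty$, $G_\infty$, and the quotient is a map over $M$.

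$PH_\infty$ contracts by sliding markings back along the unit-speed broken geodesic, but that contraction does not respect backtracking. Contractibility of $PG_\infty$ is the real work:
\begin{itemize}
\item In $\tilde P_k$, the degenerate locus $\tilde D_k=\bigcup_i R_i\cup\bigcup_j Q_j$ (where $x_{i-1}=x_i$, resp.\ $x_{j-1}=x_{j+1}$) is a union of strong deformation retracts of $\tilde P_k$. Each retraction moves $x_i$ to $x_{i-1}$, resp.\ $x_{j+1}$ to $x_{j-1}$, along a geodesic and drags the tail using Lemma \ref{pushlaongpaths}.
\item The intersections are contractible and all contain the constant configuration, so $\tilde D_k$ is contractible by the nerve lemma.
\item Since $\tilde P_k\setminus\tilde D_k\cong PG_k\setminus PG_{k-1}$ and the pairs are NDR, induction makes each $PG_k$, hence $PG_\infty$, contractible.
\end{itemize}
Only then do the two long exact sequences give $H_\infty\simeq G_\infty$.

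\textbf{Steps 1 and 2.} These also have a hole. The pointwise-geodesic homotopy from $\gamma$ to the broken geodesic through the points $\gamma(t_i)$ passes through curves that are not broken geodesics. Moreover, the corners of loops in a compact family in $\Omega^{bg}(M)$ do not vary continuously, so they cannot be added to the subdivision.

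Your approximation therefore gives surjectivity of $\pi_*(\Omega^{ubg}(M))\to\pi_*(\Omega M)$. Injectivity needs a homotopy \emph{inside} $\Omega^{ubg}(M)$ from a family to its corner-cut approximation. That is precisely your unproved phrase ``preserves $\Omega^{ubg}$ and $\Omega^{bg}$ up to homotopy''. The same issue recurs for the composite $\Omega^{ubg}(M)\to H_\infty\to\Omega^{ubg}(M)$ in Step 2. This is probably repairable, e.g.\ by a telescope of ever finer inscribed subdivisions, but it is real work.

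The paper sidesteps it entirely. $P^{bg}(M)$ and $P^{ubg}(M)$ are contractible by path shrinking. Endpoint evaluation has a lifting function over convex balls: append the minimal geodesic from $\gamma(r)$ to $\lambda(t)$. Comparison with $PM\to M$ then gives the weak equivalences on fibres immediately.

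\textbf{Upgrading to homotopy equivalences.} Whitehead's theorem needs the \emph{sources} $\Omega^{bg}(M)$, $\Omega^{ubg}(M)$, $H_\infty$, $G_\infty$ to lie in hCW as well, not just $\Omega M$; a weakly contractible space need not be contractible. The paper gets this from Sch\"on's theorem on fibres of Hurewicz fibrations whose base and total space are in hCW. That tool is available only because every model is realized as the fibre of a fibration over $M$ with contractible total space.
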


The overall approach and inspiration of this work has been to follow the footsteps of Milnor
\cite{Milnor} whose model is modified to go from simplicial segments to geodesic segments. We give a partial exposition of those
methods, albeit in a  different and extended context.  Once the models are constructed, showing that the maps are weak-equivalences proceeds in the usual way and requires that we exhibit these spaces as fibers of suitable path-loop fibrations/quasifibrations. Perhaps the greatest hurdle has been to show that the path space $PG_\infty$ of the corresponding group model $G_\infty$ is contractible  (see \S\ref{contractibilityproof}). The weak homotopy equivalences upgrade to genuine homotopy equivalences because all spaces in sight are in hCW (Theorem \ref{pgapprox} and Theorem \ref{happrox}).

As an application of the existence of \(G_\infty\) with \(BG_\infty\simeq M\), we obtain a homotopy model for the free loop space \(LM\), and more generally a model for the mapping space \(\operatorname{Map}(X,M)\), where \(X\) is any fixed finite two-dimensional CW complex. The following result, which we state here as a corollary of our constructions, encompasses all of these cases. This is Theorem \ref{XtoBG} in the text.

\begin{cor}\label{XtoBG}
Let \(M\) be a simply connected Riemannian manifold of positive
convexity radius, and let \(G=G_\infty\) be the group model constructed
above. Let \(X\) be a finite connected CW complex
obtained as the cofiber of a map
$C\colon \bigvee_{j=1}^{m} S^1
   \longrightarrow \bigvee_{i=1}^{n} S^1, m,n\geq 0$.
Writing \(F_m\) and \(F_n\) for the fundamental groups of these wedges,
let $$R\colon
\operatorname{Hom}(F_n,G)\cong G^n
\longrightarrow
\operatorname{Hom}(F_m,G)\cong G^m
$$
be precomposition with the induced map
\(C_\#\colon F_m\to F_n\).
Then there is a homotopy equivalence
\[
\operatorname{Map}(X,M)
\simeq
PG_\infty\times_G\hofib (R)
\]
where \(G\) acts on \(\hofib (R)\) by conjugation (see \S\ref{surfaces}).
\end{cor}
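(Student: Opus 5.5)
We will deduce the corollary from the main theorem, using two facts: $G=G_\infty(M)$ is a topological group with $BG\simeq M$, and for a group $G$ with contractible total space $PG_\infty$ on which $G$ acts freely, one has $\operatorname{Map}(X,BG)\simeq PG_\infty\times_G \operatorname{Map}_*(X,BG)$. The contractibility of $PG_\infty$ is the fact established in \S\ref{contractibilityproof}. The last identification is the classical one for the free mapping space: the evaluation fibration $\operatorname{Map}_*(X,BG)\to\operatorname{Map}(X,BG)\to BG$ is the bundle associated to the universal $G$-bundle $PG_\infty\to BG$. Here $G\simeq\Omega BG$ acts on the fibre by conjugation, since $X$ is a co-$H$-free space built from circles, or more invariantly through the holonomy action.

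Concretely, we first replace $M$ by $BG$, with $B G = PG_\infty/G$, and use a based homotopy equivalence $M\simeq BG$. This gives equivalences $\operatorname{Map}(X,M)\simeq\operatorname{Map}(X,BG)$ compatible with evaluation at the basepoint. Next we apply $\operatorname{Map}_*(-,BG)$ to the cofibration $\bigvee_m S^1\xrightarrow{C}\bigvee_n S^1\to X$. This yields a homotopy fibration
\[
\operatorname{Map}_*(X,BG)\to\operatorname{Map}_*(\textstyle\bigvee_n S^1,BG)\to\operatorname{Map}_*(\textstyle\bigvee_m S^1,BG),
\]
so $\operatorname{Map}_*(X,BG)\simeq\hofib\big((\Omega BG)^n\to(\Omega BG)^m\big)$. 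Using $G\simeq\Omega BG$ as monoids (the zigzag of monoid maps in Theorem \ref{main}), the map $(\Omega BG)^n\to(\Omega BG)^m$ induced by $C$ becomes, up to homotopy, the word map $R$. This is because each component of $C$ is a loop in $\bigvee_n S^1$, determined up to homotopy by a word in $F_n$, and precomposition with it evaluates that word in the group $G$. Hence $\operatorname{Map}_*(X,BG)\simeq\hofib(R)$.

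It remains to make the identification equivariant. $R$ is $G$-equivariant for diagonal conjugation on $G^n$ and $G^m$, because evaluating words commutes with conjugation. We take $\hofib(R)$ to be the standard path-space model over the basepoint $(1,\dots,1)$, which is conjugation invariant, so $G$ acts on it. We then need to check that under the equivalence $\operatorname{Map}(X,BG)\simeq PG_\infty\times_G\operatorname{Map}_*(X,BG)$, the holonomy action of $G$ on the fibre corresponds to conjugation on $\hofib(R)$. We plan to do this for a point $x\in BG$ by lifting to $p\in PG_\infty$ and transporting a loop at $x$ to a $G$-valued holonomy using the principal bundle. Changing the lift $p$ by $g$ conjugates the holonomies.

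This equivariance is the main obstacle. The equivalence $G\simeq\Omega BG$ is only a zigzag, so we must either construct the equivalence $PG_\infty\times_G\operatorname{Hom}(F_n,G)\simeq\operatorname{Map}(\bigvee_n S^1,BG)$ directly by holonomy, or use naturality of the Borel construction along the monoid maps in Theorem \ref{main}. Our first attempt will be the direct route. Each $G$-tuple together with a point of $PG_\infty$ defines a map of $\bigvee_n S^1$ into $BG$: we join $p$ to $p g_i$ by a path in the contractible $PG_\infty$, chosen continuously via a contraction, and project to $BG$. We then compare the two fibrations over $BG$ fibrewise and apply the five lemma on homotopy groups. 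Since all spaces are in hCW, the resulting weak equivalence is a homotopy equivalence. The same argument applies to the $m$-wedge, and passing to homotopy fibres gives the stated equivalence.
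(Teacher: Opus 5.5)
Your nonequivariant step, $\operatorname{Map}_*(X,M)\simeq\hofib(R)$ via the cofibre sequence and the monoid zigzag, is sound and parallels the paper's maps $q$ and $\eta$. The gap is in the unbased part, which carries the real content of the statement.

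First, the ``classical'' identification $\operatorname{Map}(X,BG)\simeq PG_\infty\times_G\operatorname{Map}_*(X,BG)$ cannot be quoted. $G$ does not act strictly on $\operatorname{Map}_*(X,BG)$; it acts only through holonomy, up to coherent homotopy. Producing a $G$-space model of the fibre with the right holonomy is exactly what has to be proved.

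Second, the concrete map you propose is not defined on the Borel construction. Take a contraction $c$ of $PG_\infty$, with $G$ acting on the left as in the paper. Your $i$-th loop is $\pi\bigl(c_\bullet(p)*\overline{c_\bullet(g_ip)}\bigr)$. The relation $(p,\mathbf g)\sim(hp,h\mathbf g h^{-1})$ turns it into $\pi\bigl(c_\bullet(hp)*\overline{c_\bullet(hg_ip)}\bigr)$. Taking $g_i=e$ shows that these agree only if each $\pi\circ c_t$ is constant on $G$-orbits. That would mean $c$ covers a contraction of $M=PG_\infty/G$, which fails unless $M$ is contractible.

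The holonomy direction you also mention can be made to work, but it needs two ingredients the proposal does not supply.

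(a) You need a $G$-equivariant, regular lifting function $\Lambda$ for $\pi\colon PG_\infty\to M$, that is, $\Lambda(hp,\gamma)=h\Lambda(p,\gamma)$. One can be arranged here. Over small convex balls the local lifts $e\mapsto e\cdot[\pi(e),\lambda(t)]$ commute with left multiplication and lift constant paths to constant paths, and Hurewicz's gluing uses only data on $M$. Then define $\Lambda(p,u_i)(1)=\mathrm{hol}_p(u_i)\,p$ and set
$\Psi_n(u)=[p,\mathrm{hol}_p(u_1),\dots,\mathrm{hol}_p(u_n)]$.
This is well defined on $\operatorname{Map}(\bigvee_nS^1,M)$ because $\mathrm{hol}_{hp}=h\,\mathrm{hol}_p\,h^{-1}$. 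It restricts to $\psi^n$ on the fibre and sends constant maps to the section $[p,e]$.

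(b) $\operatorname{Map}(X,M)$ and $PG_\infty\times_G\hofib(R)$ are the fibrewise homotopy fibres over $M$ of $C^*$ and of $1\times_G R$. So ``passing to homotopy fibres'' requires a \emph{specified}, $G$-equivariant fibrewise homotopy $(1\times_G R)\circ\Psi_n\simeq\Psi_m\circ C^*$. Concretely, you need homotopies $r_j(\mathrm{hol}_p(u))\simeq\mathrm{hol}_p(r_j(u))$ chosen continuously and equivariantly in $p$. Homotopy commutativity of the square alone induces no map of homotopy fibres.

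This is precisely where the paper invokes the $A_\infty$-structure of $\psi$ and Stasheff's classification of fibrations by strong homotopy holonomy actions, comparing $h_{ev}$, $h_\varphi$ and $\bar a$ through $q$, $\eta$, $\psi$. With $\Lambda$ in hand the homotopy can be built directly. For instance, put $\tilde\gamma=\Lambda(p,\alpha)*g_\alpha\Lambda(p,\beta)$. Then $s\mapsto\Lambda\bigl(\tilde\gamma(s),(\alpha*\beta)|_{[s,1]}\bigr)(1)$ joins $\Lambda(p,\alpha*\beta)(1)$ to $g_\alpha g_\beta\,p$ inside the fibre.

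With (a) and (b) your route would be a more elementary alternative to the paper's. As written, however, ``the same argument applies to the $m$-wedge, and passing to homotopy fibres gives the stated equivalence'' skips the step that carries the content of the corollary.
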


For the benefit of the ``user'', we formulate in \S\ref{collection} a streamlined criterion for when a collection of loops in a based space $X$ can be a weak/homotopy approximation for $\Omega X$ (Proposition \ref{distinguished}),  and in Appendix A, we list many relevant  topological properties of the spaces of piecewise geodesic loops which can possibly be of interest to the machine learning community.

We briefly explain the genesis of this paper. The seeds of the article go back to conversations between Abbas and Fred in 2002. They proposed the group model \(G_\infty\) of \S\ref{groupmodel} and posted a preliminary version of their work on arXiv \cite{bc}. Following their work on thin loop spaces \cite{gk}, Fred Cohen invited one of the younger authors to complete and expand the paper jointly. Less than a year into this collaboration, Fred passed away. Abbas and Fred are no longer with us. The remaining authors decided to complete \cite{bc} for this dedicated special volume and to expand it into its present form. They are solely responsible for any shortcomings and feel deeply privileged to be co-signing this work with the late authors.

\vskip 5pt
{\sc Human-AI statement}: ChatGPT 5.6 was used to proofread the paper, correct minor glitches, and improve the presentation. In particular, it drew our attention to reference [27], which allowed us to  shorten significantly our argument for passing from weak equivalences to genuine homotopy equivalences in the statement of our main result.

\vskip 5pt
{\sc Acknowledgments}: The first two authors thank Ryan Budney and Mike Gage for helpful discussions. The last two authors likewise thank Ali Maalaoui. The first author is grateful for the hospitality of the Institute for Advanced Study, where part of this work was carried out. He was partially supported by National Science
Foundation and the Institute for Advanced Study (in 2006).  Sadok Kallel was supported by Grant FRG23-E-S77 AS1631 of the American University of Sharjah, UAE. 

\tableofcontents
\section{Group and H-Space Models for Loop Spaces}\label{models}

All spaces considered below are in $kTop$ (see Appendix \ref{hspace}). For any space $X$, let $X^{\mathbb{R}_{+}}$ be the space of continuous maps from $\mathbb{R}_{+}$ to $X$ with the compact-open topology.
Although most of the results we state in this paper are valid for ordinary loop spaces, we opt to work with Moore loops to avoid unnecessary technicalities. Appendix A collects useful results relevant to ordinary loops. Henceforth in this entire paper, aside from Appendix A, all paths and loops are in the sense of Moore. The basepoint is always refered to as $x_0$.

A (Moore) path in $X$  consists of a pair $(\gamma,r)$, where $r\in \mathbb{R}_{+}$ and $\gamma:[0,r] \lra X $ is a continuous map, then extended to $\bbr^+$ by setting $\gamma (t)=\gamma (r), t\geq r$. Writing a Moore path $\gamma$ as defined on $[0,r]$ means taking its restriction on that interval. The space of Moore paths in $X$, denoted by $\operatorname{Path}(X)$, is the set of all such pairs $(\gamma,r)$ 
topologized as a subspace of $X^{\mathbb{R}_{+}}\times \mathbb{R}_{+}$.

As customary, for a based space $X$, $PX\subset \operatorname{Path}(X)$ will denote the space of all \textit{based paths}; that is those starting at the basepoint $x_0\in X$. The projection $p: PX\rightarrow X$, $(\gamma,r) \rightarrow \gamma(r)$ is a fibration. The loop space $\Omega X$ of $X$ is the fiber of $p$ over $x_0$. 

Given two paths $(\gamma_1,r_1) $  and $(\gamma_2,r_2)$ in $X$ such that $\gamma_1(r_1)=\gamma_2(0)$, their concatenation, denoted by $(\gamma_1,r_1)*(\gamma_2,r_2) $, is the path $(\gamma_1 *\gamma_2, r_1+r_2) $ in $X$, where
$\gamma_1 *\gamma_2:[0,r_1+r_2]\rightarrow X$ is given by 
\begin{equation}\label{productpath}
\gamma_1*\gamma_2(t) =
\begin{cases}
\gamma_1(t),& 0\leq t\leq {r_1}\\
\gamma_2( t-r_1),& r_1\leq t\leq r_1+ r_2
\end{cases}
\end{equation}
Concatenation of paths induces a strictly associative $H$-space structure on $\Omega X$ with a strict unit. It also induces a left action of $\Omega X$ on $PX$ in the standard way.

\begin{defn}\rm
An $H$-space $G$ is an \textit{$H$-space model} (or simply $H$-model) of the loop space of a based connected space $X$ if there is a zigzag of $H$-spaces and $H$-maps, that are weak-homotopy equivalences, relating $G$ and $\Omega X$
$$\xymatrix{G&W_1\ar[l]\ar[r]&\cdots&W_k\ar[r]\ar[l]&\Omega X}
$$
In the simplest case, an \(H\)-model for \(\Omega X\) is a monoid object \(G\) in \(k\mathbf{Top}\), together with a morphism
of monoid objects  $G\rightarrow\Omega X$ that is weak equivalence. Similarly, it could be a map the other way around $\Omega X\rightarrow G$ with the same properties (contrast with \cite{neisendorfer}, Section 6.8). 

Similarly, a topological group $G$ is a \textit{group model} of $\Omega X$ if the same zigzag holds with the $W_i$'s being monoids and the maps $A_\infty$-weak equivalences. Because we choose the $W_i$'s to be strictly associative, and so is $\Omega X$, the higher homotopy coherences in the $A_\infty$ maps are only there to correct the lack of strict multiplicativity of the maps.

Note finally that ``being related by a zigzag'' is an equivalence relation.
\end{defn}

We will indicate below two basic criteria for producing loop space models. The most basic criterion comes from ``path-loop type'' fibrations, or fibrations with contractible total space and an $H$-space acting on it.  As in \cite{ganea}, we say that $F\rightarrow E \rightarrow B$ is an $h$-principal fibration if there are maps $\mu$ and $\nu$ such that the diagram
$$\xymatrix{F\times F\ar[r]^{1\times i}\ar[d]^\mu&F\times E\ar[r]^{0\times p}\ar[d]^\nu&{*}\times B\ar[d]\\
F\ar[r]^i&E\ar[r]^p&B}
$$
strictly commutes, $p$ is a fibration and $\mu$ is the $H$-space structure of $F$.
Any principal $G$-bundle in the differential topological sense is a principal fibration in this sense. The path-loop fibration $$\xymatrix{\Omega X\ar[r]& PX\ar[r]^{ev}& X}\ ,\ ev ((\gamma,r)) = \gamma (r)$$ is h-principal. The action of $\Omega X\times PX\rightarrow PX$ sends $((f,s),(\gamma,r))$ to the loop product $(f,s)*(\gamma,r)$     ($f$ applied first, then $\gamma$).

Given a Hurewicz fibration $p : E{\lrar} B$ with fiber $F=p^{-1}(x_0)$, its holonomy
action is a map
\begin{equation}\label{holonomy} h: \Omega B\times F\lrar F
\end{equation}
which to $((\gamma,r), f)$ associates the point $\tilde\gamma (r)$, where $(\tilde\gamma,r)$ is a lift of the loop $(\gamma,r)$ that starts at $f\in F$ \cite{hilton, stasheff}. By the homotopy lifting property of the fibration, there is a continuous choice of such lifts, and a different choice of lifts yields a homotopic map $h'$, so holonomy (or transport) is uniquely defined, up to homotopy, and in fact it classifies fibrations in an appropriate way \cite{stasheff}. In \cite{may}, May rectifies this holonomy action to a strict action $\Omega B\times F'\rightarrow F'$ on a homotopy equivalent model $F'$ of $F$, and obtains a more straightforward classification result in terms of an associated model for the action.

If $h$ is the holonomy of a fibration $F\rightarrow E\rightarrow B$, $e\in F$ a fixed basepoint, we call \textit{boundary map} the map
$\phi : \Omega B\rightarrow F$ which is the restriction $\phi ((\gamma,r)) = h((\gamma,r), e)$.

\begin{lem}\label{hmap} Let $E\rightarrow B$ be a fibration with based fiber $F$. If the fibration is h-principal, then the boundary map $\phi : \Omega B\rightarrow F$ is an $H$-map.
\end{lem}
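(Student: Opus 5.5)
The plan is to show that $\phi(\alpha*\beta)\simeq \mu(\phi(\alpha),\phi(\beta))$ as maps $\Omega B\times\Omega B\to F$, up to a suitable ordering convention. The two inputs are the h-principal structure maps $\nu\colon F\times E\to E$ and $\mu\colon F\times F\to F$, together with the freedom to choose the lifting function defining the holonomy. Fix a lifting function $\Lambda$ for $p$: to a point $e'\in E$ and a Moore path $(\gamma,r)$ in $B$ starting at $p(e')$, it assigns a lift $\Lambda(e',\gamma)$ starting at $e'$. Then $h((\gamma,r),f)=\Lambda(f,\gamma)(r)$. Since holonomy is well defined up to homotopy independently of the choice of lifts, we may compute $\phi$ with any convenient lifting function.

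The key observation is that $\nu$ transports lifts. Let $\tilde\beta=\Lambda(e,\beta)$ be the lift of $\beta$ from the basepoint $e$. For fixed $f\in F$, the path $t\mapsto \nu(f,\tilde\beta(t))$ lies in $E$. The right-hand square of the diagram gives
\[
p(\nu(f,\tilde\beta(t)))=p(\tilde\beta(t))=\beta(t),
\]
so this path is again a lift of $\beta$. The left-hand square gives its starting point:
\[
\nu(f,i(e))=i(\mu(f,e)).
\]
Assume first that $e$ is a strict unit for $\mu$, or replace $\mu$ by a homotopic multiplication with this property. Then $\nu(f,\tilde\beta(\cdot))$ is a lift of $\beta$ starting at $f$, and its endpoint is $\nu(f,\phi(\beta))=\mu(f,\phi(\beta))$. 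By uniqueness of holonomy up to homotopy, this yields a homotopy
\[
h(\beta,f)\simeq \mu(f,\phi(\beta)),
\]
and the homotopy is natural in $(\beta,f)$. To see naturality, compare the two lifting functions $\Lambda(f,\beta)$ and $\nu(f,\Lambda(e,\beta))$ restricted to $F\times\Omega B$. Both solve the same lifting problem, so the standard argument for fibrations (lifting of a homotopy of paths relative to endpoints in $B$) provides a fiberwise homotopy between their endpoints.

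Next I would use that holonomy is compatible with concatenation. The concatenation of the lift of $\alpha$ from $e$ with the lift of $\beta$ from its endpoint is a lift of $\alpha*\beta$ from $e$. Hence
\[
\phi(\alpha*\beta)\simeq h(\beta,h(\alpha,e))=h(\beta,\phi(\alpha))\simeq \mu(\phi(\alpha),\phi(\beta)).
\]
These homotopies are continuous in $(\alpha,\beta)$ because we substitute continuously into the homotopy constructed above. This gives $\phi\circ(\text{loop product})\simeq \mu\circ(\phi\times\phi)$, so $\phi$ is an $H$-map. Preservation of the unit up to homotopy is immediate, since the constant loop lifts to the constant path at $e$.

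The main obstacle is the unit issue: $\nu(f,i(e))=i(\mu(f,e))$ equals $f$ only if $e$ is a strict right unit. Otherwise I would compose with a homotopy $\mu(-,e)\simeq \mathrm{id}$ and use the homotopy lifting property to adjust the starting point of the transported lift. Concretely, prepend to $\nu(f,\tilde\beta)$ the fiber path from $f$ to $\mu(f,e)$; this is a lift of $c*\beta$ with $c$ constant, and $c*\beta$ is homotopic to $\beta$ rel endpoints. The remaining care is in keeping all homotopies continuous in parameters. This is guaranteed by working with a single continuous lifting function on the path space and the compact-open topology in $kTop$.
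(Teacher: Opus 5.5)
Your proof is correct and follows the paper's route. The paper cites Ganea's Lemma 2.1 for the comparison of $h$ with $\mu$ and $\phi$, cites the homotopy transitivity of holonomy, and splices the two diagrams exactly as you do; you instead prove both ingredients directly, by transporting the lift of $\beta$ along $\nu(f,-)$ and by concatenating lifts, and you handle a mere homotopy unit correctly with the fiber-path fix (not by replacing $\mu$, which would break the strict square $\nu\circ(1\times i)=i\circ\mu$). Your orderings $h(\beta,f)\simeq\mu(f,\phi(\beta))$ and $h(\alpha*\beta,f)\simeq h(\beta,h(\alpha,f))$ are the ones forced by the left action $\nu\colon F\times E\to E$ and left-to-right concatenation, whereas the paper writes both diagrams with the factors reversed, and the two reversals cancel in the final composite.
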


\begin{proof}
    This is a direct consequence of Lemma 2.1 of \cite{ganea} and of homotopy transitivity.  Ganea shows that when the fibration is h-principal, the following homotopy commutes
$$\xymatrix{
\Omega B\times F\ar[dr]^h\ar[rr]^{\phi\times 1}&&F\times F\ar[dl]^\mu\\
&F
}
$$
On the other hand, homotopy transitivity of $h$  is the statement that the following diagram homotopy commutes \cite{hilton}
$$\xymatrix{
\Omega B\times \Omega B\times F\ar[d]^{*\times 1}\ar[r]^{\ \ 1\times h}&\Omega B\times F\ar[d]^h\\
\Omega B\times F\ar[r]^h&F}$$
where $*$ is loop multiplication.
We can use this second diagram to expand the first to the left as follows
$$\xymatrix{
\Omega B\times\Omega B\ar[r]^{1\times \phi}\ar[d]^{*}&\Omega B\times F\ar[dr]^h\ar[rr]^{\phi\times 1}&&F\times F\ar[dl]^\mu\\
\Omega B\ar[rr]^\phi&&F
}
$$
The larger commuting diagram is precisely the claim that $\phi$ is an $H$ map.
\end{proof}


\begin{cor}\label{recognition1} An $H$-space $G$ is an $H$-model for a connected loop space $\Omega B$ if there is an $h$-principal fibration $G\rightarrow E\rightarrow B$, with $B$ simply connected and 
$E$ a contractible space. If $G$ is a group, and the fibration is a principal $G$-bundle, then $G$ is a group model for $\Omega B$.
\end{cor}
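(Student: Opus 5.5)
We prove Corollary \ref{recognition1} by comparing the given fibration with the path-loop fibration through its boundary map. Let $G\to E\xrightarrow{p} B$ be the $h$-principal fibration, with $E$ contractible and $B$ simply connected (in particular path-connected). Fix the basepoint $e\in G=p^{-1}(x_0)$ to be the unit of $G$, and let $\phi:\Omega B\to G$ be the boundary map, $\phi(\gamma,r)=h((\gamma,r),e)$.

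\textbf{$\phi$ is an $H$-map.} By Lemma \ref{hmap}, since the fibration is $h$-principal, $\phi$ is an $H$-map.

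\textbf{$\phi$ is a weak equivalence.} Consider the long exact sequence of $p$ in homotopy. Since $E$ is contractible, the connecting map $\partial:\pi_{k}(B)\to\pi_{k-1}(G)$ is an isomorphism for all $k\geq 1$.
\begin{itemize}
\item In degree $k-1=0$ this says $\pi_0(G)\cong\pi_1(B)=0$, so $G$ is path-connected.
\item This is the step where simple connectivity of $B$ is used; it also matches the connectedness of $\Omega B$.
\end{itemize}
The key point is that, under the adjunction isomorphism $\pi_{k-1}(\Omega B)\cong\pi_k(B)$, the map $\pi_{k-1}(\phi)$ is the connecting homomorphism $\partial$. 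Indeed, $\partial$ of a class $[\alpha]$, with $\alpha:(D^k,S^{k-1})\to(B,x_0)$, is computed by lifting the adjoint family of loops in $B$ to paths in $E$ starting at $e$ and taking endpoints. This is exactly $h(-,e)$ applied to the family, using a continuous choice of lifts, which the homotopy lifting property supplies. Hence $\phi_*$ is an isomorphism on all $\pi_j$, $j\geq 0$, at the chosen basepoints. Since $\Omega B$ and $G$ are $H$-spaces, all components are homotopy equivalent and isomorphisms at one basepoint suffice. So $\phi$ is a weak equivalence and an $H$-map, which gives the zigzag (of length one) required in the definition of $H$-model.

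\textbf{Group case.} If $G$ is a topological group and $p$ is a principal $G$-bundle, the $H$-map $\phi$ must be promoted to an $A_\infty$ map, or replaced by a zigzag of monoid maps. I would use the standard zigzag through the monoid of "Moore paths in $E$ covering loops". Let
\[
W=\{(\gamma,r,g)\;:\;(\gamma,r)\in \operatorname{Path}(E),\ \gamma(0)=e,\ \gamma(r)=g\,e\},
\]
or more symmetrically the space of pairs (Moore path $\tilde\gamma$ in $E$ from $e$, $g\in G$) with $\tilde\gamma(r)=\tilde\gamma(0)g$. This space carries the product $(\tilde\gamma_1,g_1)(\tilde\gamma_2,g_2)=(\tilde\gamma_1*(\tilde\gamma_2 g_1),\,g_1g_2)$ (adjusted to right actions). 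The projections $W\to\Omega B$, $\tilde\gamma\mapsto p\tilde\gamma$, and $W\to G$ are both strict monoid maps.
\begin{itemize}
\item $W\to G$ is a weak equivalence, since its fibers are spaces of paths in the contractible $E$ between fixed endpoints.
\item $W\to\Omega B$ is a weak equivalence, since it is a fibration (path lifting in a bundle) with fiber $\Omega E\simeq *$, by unique determination of $g$.
\end{itemize}

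The main obstacle is precisely this group-model upgrade: checking that $W$ is a strictly associative monoid with strict unit and that both projections are multiplicative and weak equivalences. The equivariance of the principal action is what makes the twisted concatenation associative, so this is where I would spend the care.
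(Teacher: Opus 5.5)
Your first part (the $H$-map property from Lemma \ref{hmap}, and the weak equivalence from identifying $\pi_*(\phi)$ with the connecting map of the long exact sequence) is the paper's argument.

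\textbf{The group case takes a different route.} The paper keeps the single boundary map $\partial:\Omega B\to G$ and argues that it is $A_\infty$. It factors Stasheff's $A_\infty$ holonomy $\Theta:\Omega B\to\mathrm{hAut}(F)$ through the submonoid of $G$-equivariant self-maps of the torsor $F$, which is isomorphic to $G$. You instead strictify, producing a zigzag $G\leftarrow W\to\Omega B$ of \emph{strict} monoid maps through the monoid of Moore paths in $E$ from $e$ ending in $F$, with twisted concatenation. Your route needs no coherence theory and fits the paper's definition of a group model (a zigzag through monoids) exactly. The paper's route gives a direct map $\Omega B\to G$, but it relies on Stasheff's theorem. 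It also implicitly requires choosing lifts equivariantly, so that the higher homotopies, and not only $\Theta$, land in $G$.

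\textbf{Three corrections.}

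First, use a left action, as the paper does for $G_\infty$ on $PG_\infty$. The product is then $(\tilde\gamma_1*(g_1\tilde\gamma_2),\,g_1g_2)$. With a right action, $\tilde\gamma_2g_1$ ends at $eg_2g_1$, so you would need to compose with inversion to get a homomorphism.

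Second, and more substantively, the fiber of $W\to\Omega B$ over $(\gamma,r)$ is not $\Omega E$, and $g$ is not determined by the loop. The fiber is the space of all lifts of $\gamma$ starting at $e$. Fixing one lift $\tilde\gamma_0$, every other lift is $t\mapsto a(t)\tilde\gamma_0(t)$ for a unique path $a:[0,r]\to G$ with $a(0)=1$; this uses freeness and continuity of the division map $E\times_BE\to G$. So the fiber is contractible and your conclusion stands. This map is a weak equivalence for any principal bundle, whether or not $E$ is contractible.

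Third, contractibility of $E$ is what you need for $W\to G$. Say explicitly that this map is a fibration: it is the pullback of the endpoint fibration of Moore paths from $e$ along $G\cong F\hookrightarrow E$. Its fiber is the space of paths from $e$ to $ge$, which is $\simeq\Omega E\simeq *$. Without the fibration property, contractible fibers alone do not give a weak equivalence.
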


\begin{proof} It suffices to observe that the boundary map for $G\rightarrow E\rightarrow B$ is an $H$-map (Lemma \ref{hmap}) and is a weak equivalence since $E$ is contractible. Assume now that $G\rightarrow E\rightarrow B$ is a principal $G$-bundle. The boundary map $\partial : \Omega B\rightarrow G$ is generally not strict, but it is $A_\infty$. The way to see this indeed is to consider the adjoint map to \eqref{holonomy} which is a map $\Theta : \Omega B\rightarrow \hbox{hAut} (F)$, where $\hbox{hAut}(F)$ is the monoid of self-homotopy equivalences of the fiber, and then identify its image with the submonoid of $G$-equivariant self-maps of the $G$-torsor $F$. This submonoid is isomorphic to $G$, and so the map $\Theta$ factors through $G$, producing precisely the map $\partial$. Since Stasheff shows explicitly that $\Theta$ is $A_\infty$ \cite{stasheff}, the map $\partial$ is also $A_\infty$. This map is also a weak equivalence if $E$ is contractible.
\end{proof}


\subsection{Families of Loops}\label{collection}

A second more geometric method to construct models for loop spaces of appropriate based spaces is to restrict attention to loops having specific geometric properties (e.g., smooth or piecewise smooth, piecewise linear, Legendrian, piecewise geodesic as in this paper, etc), and show that this subspace of loops is enough to exhaust the weak homotopy type of the loop space. This idea is classical going back to work of Morse on minimal geodesics and work of Smale on regular and Legendrian curves (eg. \cite{ali}). 
It has also been implemented for example in (\cite{gk}, Proposition 2.3) for the family of ``thin loops''. We formalize this approach below.
  
Let $X$ be a based space, and $P^{\mathcal C}X\subset PX$ a subspace of based (Moore\footnote{Of course the criterion works as well for ordinary (i.e. non Moore) loops, but we stick to our notation throughout.}) paths starting at the basepoint $x_0$. We can refer to an element of $P^{\mathcal C}X$ as a $\mathcal C$-path. The restriction of the evaluation fibration for $PX$ gives a map 
\begin{equation}\label{evaluation} ev : P^{\mathcal C}X\rightarrow X\ \ ,\ \ ev((\gamma, r))= \gamma (r)
\end{equation} We define a \textit{distinguished $\mathcal C$-cover} of $X$ to be an open cover $\{U_\alpha\}_{\alpha\in I}$ such that the restrictions $ev^{-1}(U_\alpha )\rightarrow U_\alpha$ have \textit{lifting functions} for all $\alpha\in I$. To recall, a lifting function for $U$ in the cover assigns to every Moore path $(\gamma,r)\in ev^{-1}(U)$ and to every ordinary path $\lambda$ in $U$ with $\lambda (0) = \gamma (r)$, 
a lifted ``path of paths'' in the total space starting at $(\gamma ,r)$,  depending continuously on both variables $(\gamma,r)$ and $\lambda$.

\begin{rem}\rm 
A simpler condition would have been to demand that $ev^{-1}(U)\rightarrow U$ be a fibration, since maps with the lifting criterion are (Hurewicz) fibrations. We choose, however, to specify how this fibration comes about, i.e. by continuously ``lifting paths to paths of paths'', since in applications, this is the most natural, and also most explicit way to obtain the fibration property when working with specific families of paths and loops (eg. \cite{gk}). This idea is used here in Theorem \ref{pgapprox} and Lemma \ref{lem:free loop fibration}.
\end{rem}

We first   recall that a map $\pi : E\rightarrow B$ is \textit{locally a fibration} if it is a fibration over every open set of an open cover of $B$. All our fibrations are Hurewicz fibrations. The following criterion is commonly attributed to Hurewicz and Dold.

\begin{thm}\label{Dold} (\cite{spanier}, chap.2, Theorem 13).
If $B$ is a paracompact Hausdorff space, a continuous map $\pi:E
\to\ B$ is a fibration if and only if the map $\pi$ is locally a
fibration.
\end{thm}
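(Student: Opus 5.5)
Since the statement is quoted from \cite{spanier}, we only indicate how we would reprove it. We would work throughout with \emph{lifting functions}. Write $\Omega_\pi=\{(e,\omega)\in E\times B^I : \pi(e)=\omega(0)\}$. A map $\pi$ is a Hurewicz fibration if and only if there is a continuous $\Lambda:\Omega_\pi\to E^I$ such that $\Lambda(e,\omega)(0)=e$ and $\pi\circ\Lambda(e,\omega)=\omega$. The direction ``fibration $\Rightarrow$ locally a fibration'' is immediate: a lifting function for $E\to B$ restricts to one for $\pi^{-1}(U)\to U$, for any open $U$. So the work is to assemble local lifting functions $\Lambda_\alpha$ over an open cover $\{U_\alpha\}$ into a global $\Lambda$.

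First, we use paracompactness and the Hausdorff property to choose a locally finite partition of unity $\{\varphi_\alpha\}$ subordinate to the cover; after refining, we may index it by the cover itself. Next, for each finite string $\underline{\alpha}=(\alpha_1,\dots,\alpha_k)$ of indices, consider the open set of paths in $B^I$ that cross $U_{\alpha_1},\dots,U_{\alpha_k}$ in order along some subdivision. On such a path we lift piece by piece. Start at $e$, apply $\Lambda_{\alpha_1}$ on the first subinterval (reparametrised), then apply $\Lambda_{\alpha_2}$ starting at the endpoint obtained, and so on. This gives a lift over each such open set, but the subdivision points are not yet continuous in $\omega$.

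The main obstacle is making the subdivision, and hence the choice of local lifting function, vary continuously with $\omega$; lifts over overlaps do not agree, so they cannot simply be glued. We would first try Hurewicz's uniformization trick, as in Dold's treatment of numerable covers. Using the functions $\varphi_\alpha$, define continuous ``switching times'' on paths such as
\[
\tau_\alpha(\omega)=\sup\{t : \varphi_\alpha(\omega(s))>0 \text{ for all } s\le t\},
\]
suitably smoothed. Then, rather than gluing lifts, we pass to a \emph{weak} lifting property: the lift is allowed to stay constant while the path parameter waits in short transition intervals. During those pauses we move from $\Lambda_\alpha$ to $\Lambda_\beta$ by a homotopy that stays within a fibre, which is possible over $U_\alpha\cap U_\beta$.

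This produces a map with the weak covering homotopy property, i.e.\ a Dold fibration. To recover the genuine Hurewicz property, we would use that the local pieces are honest fibrations. The waiting intervals can then be shrunk to length zero continuously by reparametrising each lifted path back to $\omega$ itself. This is exactly where local finiteness of the partition of unity is used: only finitely many switches occur along each compact path, uniformly in a neighbourhood of that path in $B^I$.
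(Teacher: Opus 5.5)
The paper gives no proof of this statement. It is quoted from Spanier and used as a black box, so your sketch has to stand on its own. The direction ``fibration $\Rightarrow$ locally a fibration'' is fine. The converse, however, has a genuine gap exactly where you locate the main obstacle.

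Your switching times are not continuous. The function $\tau_\alpha(\omega)=\sup\{t:\varphi_\alpha(\omega(s))>0\ \forall s\le t\}$ is only lower semicontinuous on $B^I$: if $\omega$ touches $\varphi_\alpha^{-1}(0)$ at one instant $t_0$ and then re-enters the support, nearby paths that miss the zero set have $\tau_\alpha$ far beyond $t_0$. Also, one time per index cannot record a chain such as $U_1,U_2,U_1$ that a path may require. So ``suitably smoothed'' assumes what has to be proved. Even with continuous breakpoints, pieces whose length tends to $0$ cause trouble: the local $\Lambda_\alpha$ need not be regular, so the reparametrised lift of a very short piece need not be short.

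The pause device does not repair this; it is self-defeating. On a pause the base path is stationary. If the lift moves in the fibre there (to pass from the $\Lambda_\alpha$-lift to the $\Lambda_\beta$-lift), then collapsing the pause to length zero creates a jump, and there is no continuous ``reparametrisation back to $\omega$''. If instead the lift is constant on pauses, collapsing is harmless, but nothing has been gained over the discontinuous switch. Local finiteness only bounds the number of jumps; it does not remove them.

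Nor can the passage from the weak to the genuine covering homotopy property be waved through. The projection $\{(x,y)\in\R\times[0,1]: x\le 0\ \text{or}\ y=0\}\to\R$, $(x,y)\mapsto x$, has the weak property (it is fibre homotopy equivalent to $\mathrm{id}_\R$) but is not a Hurewicz fibration. So the local Hurewicz hypothesis must enter explicitly, and your last step does not say how.

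A workable start is to build continuity on the path space directly, for example
\[
v_{\underline\alpha}(\omega)=\prod_{i=1}^{n}\,\min_{t\in[(i-1)/n,\,i/n]}\varphi_{\alpha_i}(\omega(t)),
\]
which is continuous for the compact-open topology. For fixed $n$ these functions form a locally finite family, and over all $n$ their positivity sets cover $B^I$. On each positivity set, concatenating the $\Lambda_{\alpha_i}$ along the \emph{fixed} subdivision $i/n$ gives an honest lifting function.

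What remains is a procedure for assembling lifting functions that are defined only over the members of a numerable cover of the path space. This is the real content of Hurewicz's uniformization theorem (as in Spanier) and of Dold's version for numerable covers. Lifts cannot be averaged, and your proposal does not supply such a procedure. Either supply it or cite the result, as the paper does.
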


\begin{prop}\label{distinguished} Let $X$ be a path-connected paracompact Hausdorff based space, and $P^{\mathcal C}X$ a contractible subspace of based paths in $X$. Define $\Omega^{\mathcal C}X = ev^{-1}(x_0)$. If $X$ has a  distinguished $\mathcal C$-cover, then the inclusion  
$\Omega^{\mathcal C}X\hookrightarrow \Omega X$ is a weak homotopy equivalence. If $X$ has the homotopy type of a CW complex, then this inclusion is a homotopy equivalence. 
\end{prop}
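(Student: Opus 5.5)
The plan is to show that $ev : P^{\mathcal C}X\to X$ is a Hurewicz fibration, compare it with the path-loop fibration $\Omega X\to PX\to X$, and read off the equivalence from the two long exact sequences.

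\emph{Step 1: $ev$ is a fibration.} By hypothesis, for each $U_\alpha$ in the distinguished $\mathcal C$-cover, the restriction $ev^{-1}(U_\alpha)\to U_\alpha$ has a lifting function. A map admitting a lifting function is a Hurewicz fibration, so $ev$ is locally a fibration with respect to $\{U_\alpha\}$. Since $X$ is paracompact Hausdorff, Theorem \ref{Dold} shows that $ev : P^{\mathcal C}X\to X$ is a fibration. Its fiber over $x_0$ is $\Omega^{\mathcal C}X$.

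\emph{Step 2: map of fibrations.} The inclusion $P^{\mathcal C}X\hookrightarrow PX$ commutes with the evaluation maps to $X$. Hence it restricts on fibers to the inclusion $\Omega^{\mathcal C}X\hookrightarrow\Omega X$, and we get a map of fibration sequences over the identity of $X$. Both total spaces are contractible: $P^{\mathcal C}X$ by hypothesis, and $PX$ by the standard contraction of Moore paths, shrinking $(\gamma,r)$ to $(\gamma|_{[0,sr]},sr)$.

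Take the long exact homotopy sequences at every basepoint of $\Omega^{\mathcal C}X$; a point of the fiber can serve as basepoint since the total spaces are contractible. In both sequences the connecting map $\pi_{k+1}(X)\to\pi_k(\text{fiber})$ is a bijection, including on $\pi_0$, where it is a bijection of pointed sets. Naturality of the connecting map with respect to the map of fibrations, together with the identity on $\pi_{k+1}(X)$, then shows that $\pi_k(\Omega^{\mathcal C}X)\to\pi_k(\Omega X)$ is a bijection for all $k\ge 0$ and all basepoints. So the inclusion is a weak homotopy equivalence.

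\emph{Step 3: upgrade to a homotopy equivalence.} Suppose $X$ has the homotopy type of a CW complex. Then $\Omega X$ does too, by Milnor's theorem. For $\Omega^{\mathcal C}X$ we use that it is the fiber of a Hurewicz fibration whose total space is contractible, hence has CW homotopy type, and whose base has CW homotopy type. By Stasheff's theorem, the fiber of a fibration with base and total space of CW homotopy type has the homotopy type of a CW complex. Whitehead's theorem then promotes the weak equivalence to a homotopy equivalence.

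I expect the main obstacle to be this last step, specifically justifying that $\Omega^{\mathcal C}X$ has CW homotopy type. Stasheff's result is the cleanest route, but its precise hypotheses, such as Hurewicz versus Serre fibrations and the $\pi_0$ issues, need checking. If it fails, an alternative is to compare the fiber with the homotopy fiber of $ev$: the fiber of a Hurewicz fibration is homotopy equivalent to the homotopy fiber, which here is $\Omega X\times P^{\mathcal C}X\simeq\Omega X$. This also gives the homotopy equivalence directly.
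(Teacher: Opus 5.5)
Your proposal is correct and follows the paper's proof step for step: lifting functions on the cover together with Theorem \ref{Dold} make $ev$ a fibration, comparing it with the path--loop fibration over the same base (both total spaces contractible) gives the weak equivalence, and once the fiber is known to be in hCW, Whitehead's theorem finishes. The one correction is the citation in Step 3: the fact that the fiber of a Hurewicz fibration with base and total space in hCW is itself in hCW is Proposition 3 of Sch\"on \cite{schon}, which is exactly what the paper uses (the classical Stasheff result goes the other way, from base and fiber to total space), so your fallback is unnecessary --- and as written it would only give an abstract equivalence $\Omega^{\mathcal C}X\simeq\Omega X$, not that the inclusion itself is one.
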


\begin{proof}
The point is that under the hypotheses, the evaluation map $ev: P^{\mathcal C}X\rightarrow X$
is a fibration. 
To see this, let $U$ be an element of the cover, and $ev_{|U} :ev^{-1}(U)  \longrightarrow  U$ the restriction of $ev$ to $U$. Since $ev$ has a continuous lifting function on $U$, it is a Hurewicz fibration over that restriction (\cite{spanier}, chap.2). Since this is valid for all $U$ in the cover, it follows that $ev$ is a fibration by Theorem \ref{Dold}. 
The fibration \eqref{evaluation} embeds into the path loop fibration for $X$, with both base spaces being the same and both total spaces being contractible. By the long exact sequence in homotopy groups, $\Omega^{\mathcal C} X\rightarrow \Omega X$ is a weak equivalence. Finally, when $X$ is in hCW, and since $P^{\mathcal C}X$ is contractible, so also in hCW, it follows by Proposition 3 of \cite{schon} that the fiber $\Omega^{\mathcal C}(X)$ is also in hCW. Since a weak equivalence between spaces in hCW is a homotopy equivalence, the inclusion $\xymatrix{\Omega^{\mathcal C}(X)\ar[r]^-{\simeq}&\Omega X}$ is a homotopy equivalence, and the second claim follows.
\end{proof}

\section{Geometric Preliminaries}\label{prelims}


We introduce the main players in this paper and summarize their geometric properties. As indicated earlier, $M$ always stands for a path-connected simply connected Riemannian manifold, with a fixed Riemannian metric tensor $g$. There is a canonical metric $d_M$, coming from $g$,  obtained by stipulating that the distance between two points $d_M(p,q)$ is the greatest lower bound for the arc-lengths of piecewise smooth paths joining the points \cite{Milnor2}.  We also assume that $M$ is based, with $x_0\in M$ the chosen basepoint.

A path $(\gamma,r)$ in $M$ is  a \textit{geodesic} if the map  $\gamma:[0,r] \rightarrow M$  is $C^{\infty}$ with acceleration vector field that is identically zero. The velocity vector field must be constant and parallel along $\gamma$. 
This definition of geodesic uses the Levi-Civita connection on $M$ but luckily, there is a metric notion of geodesic, easier to work with, and which agrees with the former one. It is embodied in the following result (\cite{Milnor2}, Corollary 10.7): \textit{any smooth map $\gamma:[0,r] \rightarrow M$ that is length minimizer and has constant speed (both conditions together) must  satisfy the geodesic equation $\nabla_{\dot\gamma}\dot\gamma = 0$.}
We then define a \textit{minimal geodesic path} $(\gamma,r)$ to be any $C^\infty$-path that realizes the shortest distance between its endpoints and has constant speed $\|\dot\gamma\| $.
Once we fix the endpoints, we obtain the following definition (to be compared to
\cite{Milnor2}, Definition 1,\S10).

\begin{defn}\label{segment}\rm (Geodesic segment).  
A \textit{geodesic segment} between two distinct points $p,q$ is a minimal geodesic path $(\gamma,r)$ in $M$, such that $\gamma (0)=p$ and $\gamma (r)=q$. In other words, it is a {smooth} path between $p$ and $q$ such that
\[
r\|\dot\gamma(t)\|={d_M(p,q)}, \qquad \forall\, t\in[0,r].
\]
Note that when $r=1$, the speed is exactly the distance between the points, and when $r=d_M(p,q)$, the speed is $1$. In the latter case (unit speed), we say that the geodesic path is \textit{uniform}, in which case the segment between $p,q$ is parameterized by arclength on $[0,d_M(p,q)]\rightarrow M$. Clearly, the constant path at $p$ is the degenerate geodesic segment from \(p\) to \(p\).
\end{defn}


\bre\label{refining} A  very useful property of
minimizing geodesics is that \emph{every subsegment of a minimizing geodesic is
itself minimizing}. Subdividing a geodesic segment at an interior marked point produces two
geodesic segments.
If the original segment is \emph{uniform} in the terminology of
Definition \ref{segment} (unit speed), then the two resulting segments are also
uniform. 
Note that if
$x=\gamma(s)$, $0<s<r$,
is an interior marked point, then
\[
d_M(p,q)=d_M(p,x)+d_M(x,q),
\]
because \(x\) lies on the minimizing geodesic segment.
\ere

\begin{defn}\rm A ball $B$ in a Riemannian manifold is \textit{geodesically convex} if for any two points $ p,q\in B$, there is a  unique minimal geodesic segment joining $p$ to $q$, whose image lies in the ball. Moreover, and importantly, 
these geodesic segments depend differentiably on the endpoints (\cite{Milnor2}, \S10). 
\end{defn}

The \textit{convexity radius} $\operatorname{conv}(p)$ at a point $p$ is the largest $r$ such that the \textit{open ball} of radius $\rho$ centered at $p$; $B(p,\rho)$,  is geodesically convex, for all $\rho  <r$.  We will also refer to $B(p,\rho )$, $\rho < \operatorname{conv} (p)$, as being a \textit{geodesic ball} (so necessarily convex). We set
\[
\operatorname{conv}(M)
:= \inf_{p\in M}\operatorname{conv}(p) = 
\inf_{p\in M}
\sup\left\{ r>0 \;\middle|\;
B(p,r)\ \text{is  geodesically convex}
\right\},
\]
A manifold has \textit{positive convexity radius} if $\operatorname{conv}(M)>0$. This is always the case if $M$ is compact (see \cite{docarmo}, Proposition 4.2) or has bounded geometry. In the compact case, by (\cite{berger}, chapter 6, Proposition 95) 
$$0< \operatorname{conv}(M) \leq {\operatorname{inj}(M)\over 2}$$
where $\operatorname{inj}(M)$ is the injectivity radius of $M$. 
Note that for any $0<r< \operatorname{inj} (M)$,
$B(p,r)$ is diffeomorphic to an open ball. Note also that if $q\in B(p, \operatorname{inj}(M))$, then there is a unique minimizing geodesic segment from $p$ to $q$, whereas, when $q\in B(p, \operatorname{conv}(p))\subset B(p, \operatorname{inj}(M))$, there is a unique geodesic between \textit{any} $q$ and $q'$ in that smaller ball.  This is the meaning of geodesic convexity (also dubbed strong convexity in some books). 

We will set
\begin{equation}\label{delta}
\delta_M = \min \left\{1, {\operatorname{conv}(M)\over 4}\right\}
\end{equation}
For simplicity and where no confusion, we will write $\delta=\delta_M$.

Consider the uniform metric $d_\infty$ on the space of paths $\operatorname{Path}(M)$. This, we recall, is defined as follows:  given Moore paths $(\gamma_i,r_i) \in \operatorname{Path}(M)$, $i=0,1$, 
 $$
\begin{array}{ccc}
\tilde{\gamma_i}:\R_+ \rightarrow M & \tilde{\gamma}_i(t)=
\left\{
\begin{array}{cc}
\gamma_i(t) &t\leq r_i   \\
\gamma_i(r_i) &r_i\leq t 
\end{array}
\right. \end{array}
$$
we define
\begin{equation}\label{dinfinity}
d_\infty\big((\gamma_0,r_0),(\gamma_1,r_1)\big)
=
\sup_{t\in \R_+} d_M(\Tilde{\gamma_0}(t),\Tilde{\gamma_1}(t))+|r_0 -r_1|.
\end{equation}
The topology induced by this metric is called the uniform convergence topology.   In Section \ref{models}, we defined the Moore loop space \(\Omega X\) as a subspace of
$
C(\mathbb{R}_+,X)\times \mathbb{R}_+,
$ with the compact-open topology on the loop space factor. For metrizable \(X\), this subspace topology agrees with the topology induced by the metric \(d_\infty\) defined above (we refer to \cite[Proposition 5.3]{hu} whose proof for the case of ordinary loops adapts to this situation).

Two paths $(\gamma_0,r_0)$ and $(\gamma_1,r_1)$ of $M$
satisfying    $\gamma_0(0)=\gamma_1(0)=a$ and $\gamma_0(r_0)=\gamma_1(r_1)=b$,    are said to be homotopic relative to their endpoints if they lie in the same path component of the subspace 
\[
Path_{(a,b)}(M)=\{(\gamma,r)\in Path(M) \mid \gamma(0)=a \text{ and } \gamma(r)=b\} 
\]
 of $Path(M)$.

\begin{lem} \label{homotopicmaps}
Any two 
paths $(\gamma_0,r_0),  (\gamma_1,r_1)$ in $Path(M)$ such that $d_\infty((\gamma_0,r_0),(\gamma_1,r_1)\big) < \operatorname{conv} (M)$,  $\gamma_0(0)=\gamma_1(0)=a$ and $\gamma_0(r_0)=\gamma_1(r_1)=b$, are homotopic relative to their endpoints.
\end{lem}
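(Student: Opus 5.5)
Since $d_\infty((\gamma_0,r_0),(\gamma_1,r_1))<\operatorname{conv}(M)$, for every $t\in\R_+$ we have $d_M(\tilde\gamma_0(t),\tilde\gamma_1(t))<\operatorname{conv}(M)$. The plan is a straight-line homotopy along minimal geodesics. For each $t$, join $\tilde\gamma_0(t)$ to $\tilde\gamma_1(t)$ by the unique minimal geodesic segment $\sigma_t:[0,1]\to M$ (constant speed, parametrized on $[0,1]$), and interpolate the lengths linearly by $r_s=(1-s)r_0+sr_1$. Set
\[
H(s,t)=\sigma_t(s),\qquad s\in[0,1],\ t\in\R_+ .
\]
The required homotopy rel endpoints is then $s\mapsto (H(s,\cdot)|_{[0,r_s]},r_s)$. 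The basic geometric input is that $\tilde\gamma_1(t)$ lies in the open ball $B(\tilde\gamma_0(t),\rho)$ for some $\rho<\operatorname{conv}(M)\le\operatorname{conv}(\tilde\gamma_0(t))$. That ball is geodesically convex, so $\sigma_t$ exists, is unique, and lies in the ball.

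\textbf{Continuity.} This is where I expect the main work to be. I will use the fact recalled in the definition of geodesic convexity: geodesic segments in a convex ball depend differentiably on their endpoints. To get continuity at $(s_*,t_*)$, choose $\rho$ with $d_\infty<\rho<\operatorname{conv}(M)$ and $\epsilon>0$ small. For $t$ near $t_*$, continuity of the $\tilde\gamma_i$ puts both $\tilde\gamma_0(t)$ and $\tilde\gamma_1(t)$ in the single ball $B(\tilde\gamma_0(t_*),\rho+\epsilon)$, which is still convex provided $\rho+\epsilon<\operatorname{conv}(M)$. Inside this one ball the minimal segment is a smooth function of its endpoints (via the inverse exponential map on the ball). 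By uniqueness it agrees with $\sigma_t$, since the minimal geodesic between two points at distance $<\operatorname{conv}$ is unique. Hence $(s,t)\mapsto\sigma_t(s)$ is jointly continuous. Continuity of $s\mapsto H(s,\cdot)$ in the compact-open (equivalently $d_\infty$) topology then follows from joint continuity. For uniformity in $t$, note that $\tilde\gamma_i$ is constant for $t\ge\max(r_0,r_1)$, so only the compact interval $[0,\max(r_0,r_1)]$ matters, and joint continuity on a compact set gives uniform continuity.

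\textbf{Endpoints.} At $t=0$ both paths start at $a$, so $\sigma_0$ is constant at $a$. The subtle point is the endpoint $H(s,r_s)$, which must equal $b$. This is not automatic, because for $t$ between $r_0$ and $r_1$ one path has already stopped at $b$ while the other has not. The fix is to first reparametrize both paths to the common domain $[0,1]$, setting $\hat\gamma_i(u)=\gamma_i(ur_i)$, and to apply the construction above to $\hat\gamma_0,\hat\gamma_1$. Then $\hat\gamma_0(1)=\hat\gamma_1(1)=b$, so $\sigma_1$ is constant at $b$, and the Moore path at time $s$ is $(u\mapsto\sigma_{u}(s)$ rescaled to $[0,r_s]$, $r_s)$.

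This requires $d_M(\hat\gamma_0(u),\hat\gamma_1(u))<\operatorname{conv}(M)$ for all $u$, which does not obviously follow from the $d_\infty$ bound. I will handle this in three moves:
\begin{enumerate}
\item First apply the unrescaled homotopy on $[0,\min(r_0,r_1)]$.
\item Observe that on $[\min(r_0,r_1),\max(r_0,r_1)]$ one path sits at $b$, so the other stays within $d_\infty$ of $b$, inside the convex ball $B(b,\rho)$.
\item Contract that tail to $b$ radially along geodesics from $b$, with its length shrinking.
\end{enumerate}
Concatenating these homotopies, which fix endpoints, gives the claim. The remaining checks are routine.
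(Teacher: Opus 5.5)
Your proposal is correct and is essentially the paper's argument. The paper extends the shorter path by the constant $b$ (this does not increase $d_\infty$), homotopes $(\gamma_0,r_0)$ rel endpoints to that extension, and then applies the pointwise geodesic homotopy on the common interval $[0,\max(r_0,r_1)]$. Your moves (1) and (3) together are exactly that homotopy, because on the tail the minimal geodesic from the constant value $b$ to $\gamma_1(t)$ is your radial contraction.

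The one point to tighten is that neither move (1) nor move (3) fixes the junction point $\gamma_1(\min(r_0,r_1))$ on its own. They must be run simultaneously in $s$ (juxtaposed in $t$), not one after the other. This works precisely because both move that point to $b$ along the same minimal geodesic.
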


\begin{proof} The proof is slightly easier for ordinary loops. For Moore loops, we need to adjust according to the parameter $r$. The steps are spelled out : 
\begin{enumerate}
    \item Step 1: $r_0=r_1=r$. For every $t\in \R_+$, $ \gamma_0(t)$ and $ \gamma_1(t)$ are within the convexity radius of $M$, and so are joined by a unique minimal geodesic  $(\alpha_t,1)$. Define $$h:[0,1] \rightarrow Path_{(a,b)}(M)$$   by $h(s) = (h_s,r)$, where $h_s:[0,r] \rightarrow M$ is given by $h_s(t)= \alpha_t(s) $. Then $h$ is continuous, $h(0)= (\gamma_0,r_0)$ and $h(1)= (\gamma_1,r_1)$. It follows that 
    $(\gamma_0,r_0)$ and $(\gamma_1,r_1)$ are homotopic relative to the endpoints.
    \item Step 2: general case. The relation "homotopic relative to the endpoints" is an equivalence relation, therefore we may assume that $r_0 \leq r_1$. Let $(\beta_0  ,r_1)$ be the path in $M$ given by 
    \[
     \beta_0(t)=
\left\{
\begin{array}{cc}
\gamma_0(t) &t\leq r_0   \\
\gamma_0(r_0) &r_0\leq t \leq r_1
\end{array}
\right.
\]
Then clearly, $(\gamma_0,r_0)$ and $(\beta_0,r_1)$ are homotopic relative to the endpoints. Moreover  
$\tilde{\beta_0}=\tilde{\gamma_0}$, therefore
$$ d_\infty\big((\beta_0,r_1),(\gamma_1,r_1)\big) \leq d_\infty\big((\gamma_0,r_0),(\gamma_1,r_1)\big)< \operatorname{conv}(M).$$
By Step 1, $(\beta_0,r_1)$ and $(\gamma_1,r_1)$ are homotopic relative to the endpoints. It follows that 
 $(\gamma_0,r_0)$ and  $(\gamma_1,r_1)$ are homotopic relative to the endpoints.
\end{enumerate}
\end{proof}


\section{The Broken Geodesic Models}\label{piecewisemodels}

From now on, we use the following notations and subspace inclusions
$$\Omega M\hookrightarrow PM\hookrightarrow \operatorname{Path}(M) \subset C^0(\mathbb{R}_{+},M)\times \mathbb{R}_{+}.$$
where $\operatorname{Path}(M)$ and $PM$ are, respectively, the space of all paths and the space of all based paths, as previously defined. 

We start with a definition. 

\begin{defn}\label{broken}\rm (Broken Geodesic).
 A \textit{broken geodesic path}  in $M$ is a path
$(\gamma,r) $ in $M$ such that for a finite partition $t_0=0\leq t_1\leq \cdots \leq t_n=r$, and all $1\leq i\leq n$, the composite of the affine reparameterization $[0,t_i-t_{i-1}]\rightarrow [t_{i-1}, t_i]$ with the restriction of $\gamma$ 
$$\gamma_i : [0, t_i-t_{i-1}]\rightarrow [t_{i-1},t_i]\rightarrow M$$
defines a geodesic segment $(\gamma_i, t_i-t_{i-1})$ from $\gamma (t_{i-1})$ to $\gamma (t_i)$. We write $x_i := \gamma (t_i)$ in $M$ and call these images the ``marked points'' of the broken path.  Note that in the subdivision, $t_i$ and $t_{i+1}$ can be the same and so marked points can coincide. Note also that under the product \eqref{productpath}, we can succinctly represent the broken geodesic as a concatenation of segments
$$(\gamma, r) = (\gamma_1, r_1)*\cdots *(\gamma_{n},r_{n})$$
where $r_i=t_i-t_{i-1}$.
\end{defn}

\begin{defn}\label{marking}\rm (Marking) We put an equivalence relation on the set of tuples in $M$ according to
$$(x_0,\ldots, x_n)\sim (x_0,\ldots, \hat x_i,\ldots, x_n)\ \ \hbox{if}\ x_i=x_{i+1}\ ,\ 0\leq i < n$$
 We recall that the standard notation $\hat x_i$  means removal of the $i$-entry. An equivalence class is written ${\bf  m} =[x_0,\ldots, x_n]$, and a \textit{marking} is a representative with  $x_i\neq x_{i+1}$ for $0\leq i < n$. 
Any broken geodesic with subdivision $0=t_0\leq t_1\leq \cdots \leq t_n=r$ as in Definition \ref{broken} defines a marking
$[\gamma (t_0),\ldots, \gamma (t_n)]$. All broken geodesics come with a marking but this is not unique, it depends on the subdivision of $[0,r]$, and so does not determine the broken geodesic as a parameterized path in $M$. The model $H_\infty$ that we will introduce shortly is meant to remedy to this shortcoming precisely \S\ref{marked}.
\end{defn}

Let $M$ be a Riemannian manifold with its canonical metric $d_M$. We define the space of broken geodesic loops
\begin{equation}\label{omegapg}
\Omega^{bg}(M) = \{(\gamma,r)\in\Omega M\ |\ \gamma\ \hbox{is a broken geodesic, $\gamma (0)=\gamma (r)= x_0$}\}
\end{equation}
This is a proper subspace of $\Omega M$, the space of all based loops at $x_0$, with the compact-open topology. The usual loop product in $\Omega M$ restricts to a product on $\Omega^{bg} (M)$.
The subspace $\Omega^{bg} (M)$  depends on the Riemannian metric tensor $g$ we choose on $M$. It turns out however that its weak homotopy type does not depend on that metric (Theorem \ref{pgapprox}).  

\subsection{Uniform paths and the small model}\label{sigmacompact} A model is said to be \textit{small} if it is $\sigma$-compact.
A \textit{uniform} broken geodesic is a broken geodesic
$(\gamma,r)$ with marking ${\bf  m} = [z_0,\ldots, z_n]$, $\gamma (t_i)=z_i$, such that 
$$||\dot\gamma (t)|| = 1\ \ \ \forall\ t\in (0,r)\setminus \{t_1,\ldots, t_{n-1}\}.
$$

If $\gamma$ is a uniform broken geodesic, then $L(\gamma )=r$ and 
$t_{i+1}-t_i  = {d_M(z_{i+1},z_i)}$.
We will write $\Omega^{ubg} (M)\subset \Omega^{bg} (M)$ the subspace of all uniform broken geodesics. Both $\Omega^{bg} (M)$ and $\Omega^{ubg} (M)$ inherit the standard $H-$space structure defined on $\Omega M.$

The following is the main result of this section, and gives our  first set of $H$-space models.

\begin{thm}\label{pgapprox} The inclusions
   $\Omega^{ubg}(M)\hookrightarrow  \Omega^{bg}(M)\hookrightarrow \Omega M$ are homotopy equivalences. 
\end{thm}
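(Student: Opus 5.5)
We plan to apply Proposition \ref{distinguished} twice, once for each family. Let $P^{bg}M\subset PM$ be the based broken geodesic paths, and $P^{ubg}M\subset P^{bg}M$ the uniform ones. Their fibers over $x_0$ under $ev$ are $\Omega^{bg}(M)$ and $\Omega^{ubg}(M)$. A manifold is paracompact Hausdorff and has the homotopy type of a CW complex. Hence it suffices to prove two things for each family: that the path space is contractible, and that $M$ admits a distinguished cover for the family. The composite inclusions into $\Omega M$ are then homotopy equivalences, and by two-out-of-three so is $\Omega^{ubg}(M)\hookrightarrow\Omega^{bg}(M)$.

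\emph{Distinguished cover.} Take the cover by geodesic balls $U_p=B(p,\delta)$, with $\delta$ as in \eqref{delta}. Let $(\gamma,r)$ be a path ending in $U_p$, and let $\lambda$ be a path in $U_p$ with $\lambda(0)=\gamma(r)$. The lift at time $s$ is
\[
(\gamma,r)*(\sigma_s,\ell_s),
\]
where $\sigma_s$ is the unique minimal geodesic from $\gamma(r)$ to $\lambda(s)$ inside the convex ball. In the uniform case we parametrize $\sigma_s$ by arclength, so $\ell_s=d_M(\gamma(r),\lambda(s))$. In the non-uniform case we may also use $\ell_s=d_M(\gamma(r),\lambda(s))$, with the constant path when $\ell_s=0$.

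Appending one geodesic segment keeps us inside the family, and in the uniform case it keeps unit speed. Continuity in $((\gamma,r),\lambda)$ follows from two facts. First, in a convex ball geodesics depend smoothly on their endpoints. Second, Moore concatenation is continuous for the metric $d_\infty$, which induces the compact-open topology. The degenerate segments of length tending to $0$ need a separate check. With arclength parametrization, $d_\infty$ to the constant path is bounded by about $2\ell_s$, so this is fine. The lift starts at $(\gamma,r)$ because $\sigma_0$ is constant of length $0$.

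\emph{Contractibility.} This is the main obstacle. The naive contraction of $PM$ retracts a path along itself: $H_u(\gamma,r)=(\gamma|_{[0,ur]},ur)$. This does preserve broken geodesics, because of Remark \ref{refining}: truncating a broken geodesic at any time cuts one segment into a subsegment, which is still minimal. It also preserves uniformity, since truncation does not change speeds. At $u=0$ the path becomes the constant path at $x_0$ of length $0$.

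The delicate point is continuity of $H$ on $P^{bg}M$ with the subspace topology. The formula is continuous on all of $PM$, in the $d_\infty$ metric, jointly in $u$ and $(\gamma,r)$, by uniform continuity of each $\gamma$ on compact intervals. Restricting it gives the required contraction of the subspace. So the contraction is simply the restriction of the standard one, and one only has to note that the family is closed under truncation.

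Finally, we apply Proposition \ref{distinguished} to both families. The homotopy equivalence upgrade uses the fact that $M$ is in hCW, as stated there.
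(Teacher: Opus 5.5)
Your proposal is correct and follows the paper's proof. You apply Proposition \ref{distinguished} to both $P^{bg}(M)$ and $P^{ubg}(M)$, with the cover by convex geodesic balls and the lifting function that appends the minimal geodesic from $\gamma(r)$ to $\lambda(s)$ parametrized over $[0,d_M(\gamma(r),\lambda(s))]$, together with the standard shrinking contraction of the path space and the hCW upgrade. Your explicit choice of the truncation $(\gamma,r)\mapsto(\gamma|_{[0,ur]},ur)$ visibly preserves unit speed in the uniform case, a point the paper passes over with ``the exact same argument applies''.
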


\begin{proof}
Define $P^{bg}(M)$ the space of all broken geodesics starting at $x_0$, ending anywhere. This is clearly a contractible subspace of $PM$ since the standard ``path shrinking'' contraction of $PM$ is a linear map on segments, and so restricts to $P^{bg}(M)$.
Any cover of $M$ by geodesically convex balls is distinguished for this collection of paths (see \S\ref{collection}). Indeed, over every geodesically convex ball $U$, the evaluation $ev^{-1}(U)\rightarrow U$ has an evident lifting function which consists of assigning to every $(\gamma,r)\in ev^{-1}(U)$, and every path $\lambda$ in the base starting at $\gamma (r)$, the following ``path of paths'' in $ev^{-1}(U)$  $$(\gamma_t,r+d_t)\ \ ,\ \ \gamma_0=\gamma\ \ ,\ \  d_t=d_M(\gamma (r),\lambda (t))$$
where $\gamma_t = \gamma *\alpha_t$, and $\alpha_t$ is the unique minimal geodesic from $\gamma (r)$ to $\lambda (t)$, parameterized over $[0,d_M(\gamma (r),\lambda (t))]$.
The claim is now immediate by Proposition \ref{distinguished}.
The exact same argument applies to the endpoint projection $P^{ubg}(M)\rightarrow M$, where $P^{ubg}(M)$ is the subspace of $P^{bg}(M)$ of uniform based paths. 

To upgrade the weak homotopy equivalences we just obtained to homotopy equivalences, it
suffices to show, by Whitehead's theorem, that all spaces have the
homotopy type of CW-complexes. This is however an immediate consequence of Proposition~3 of \cite{schon}. It states that if
$
F\longrightarrow E\longrightarrow B
$
is a Hurewicz fibration and \(E,B\in\mathrm{hCW}\), then
\(F\in\mathrm{hCW}\). Each space in the above diagram is the fiber of
a Hurewicz fibration over \(M\) whose total space is the corresponding
based path space:
$P^{ubg}(M),
P^{bg}(M)$ and $
PM$ respectively. All these total spaces are contractible and hence belong
to \(\mathrm{hCW}\). Since the manifold \(M\) also has the homotopy
type of a CW-complex, Proposition~3 of \cite{schon} implies that all
the fibers belong to \(\mathrm{hCW}\).
\end{proof}


\section{Combinatorial Approximations: the Monoid Model}\label{marked}

As mentioned in the introduction, our combinatorial models consist of tuples of points in $M^n$, for all $n$, related by identifications. 
The idea is to ``reconstruct'' the space of loops from the markings of closed geodesics, up to homotopy. 

We first define the following subspace of tuples, with $x_0\in M$ a fixed basepoint
\begin{equation}\label{markedloops}  
\tilde G_k = \{(x_0, x_1, \ldots x_{k-1},x_0)\in M^{k+1}\ |\ d_M(x_i, x_{i+1}) \leq \delta,\ 0\leq i\leq k-1 , d(x_{k-1},x_0)\leq \delta \}
\end{equation}
As in Definition \ref{marking}, we can consider the quotient construction
\label{Hn}
\begin{equation}\label{Hn}
H_n=\Big(\coprod_{0\le k\le n}\widetilde G_k\Big)\big/\sim
 \end{equation} 
where $\sim$ is the equivalence relation generated by the relation below
\begin{equation}\label{relation1}(x_0, \ldots, x_i, \cdots, x_k=x_0) \sim (x_0,\ldots,
\hat x_i, \cdots, x_k=x_0)\ \ \ \ \ \quad\text{whenever }x_i=x_{i+1}
\end{equation}  
An element in $H_n$ is a marking ${\bf m} = [x_0,x_{1}, \ldots , x_{k-1},x_0]$, and any such marking gives rise to a well-defined broken geodesic loop in $M$ based at $x_0$ by taking the concatenation of the unique geodesic segments $\sigma_i$'s between consecutive entries $x_i,x_{i+1}$, parameterized by arclength over $[0,d_M(x_i,x_{i+1})]$ (i.e. uniform), as done earlier. 
Observe that two tuples that differ by a repeated entry give rise to the exact same broken geodesic. We therefore obtain a well-defined map
\begin{equation}\label{imageonto}
\phi_n: H_n\lrar\Omega^{ubg}_\delta (M,n)\hookrightarrow \Omega M\ \ ,\ \ \phi_n(\bf m) = \gamma_{\bf m}
\end{equation}
where, by definition,
$\Omega^{ubg}_\delta (M,n)$
is the image of $\phi_n$ in
$\Omega^{ubg}(M)$, and this consists of all uniform broken geodesics with at most $n$ marked point.
The map $\phi_n$ is continuous by an adequate version of Lemma \ref{continuous}, where $H_n$ has the quotient topology of the identification space, and the target $\Omega M$ has the compact open topology. The map onto $\Omega^{ubg}_\delta(M,n)$ is surjective but not injective.

There are inclusions
$H_n\hookrightarrow H_{n+1}$, and juxtaposing tuples yields maps $\mu_{n,m}$ that render the following diagram strictly commutative.
\begin{equation}\label{Hmap}
\xymatrix{
H_n\times H_m\ar[d]^{\phi_n\times\phi_m}\ar[r]^{\mu_{n,m}}& H_{n+m}\ar[d]^{\phi_{n+m}}\\
\Omega^{ubg}_\delta(M,n)\times 
\Omega^{ubg}_\delta(M,m)\ar[r]^{\ \ \ \ \ \ *}&
\Omega^{ubg}_\delta(M,n+m)
}
\end{equation}
This turns the direct limit $H_\infty := \operatorname{colim} H_n$, with its colimit topology, into a topological monoid (see Appendix B where the continuity of the multiplication is established). Note that there are inclusions
$\Omega^{ubg}_\delta(M,n)\subset\Omega^{ubg}_\delta(M,n+1)$ obtained by introducing a new marked point as an interior point of any segment.

\begin{lem}
    There is a homeomorphism 
    $\Omega^{ubg}(M)=\bigcup_{n\geq 1}\Omega^{ubg}_\delta(M,n)$, where the union is endowed with the subspace topology of $\Omega M$.
\end{lem}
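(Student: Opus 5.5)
The statement is essentially a set-theoretic equality of subsets of $\Omega M$, and once it is established the homeomorphism is the identity map. Both sides carry the subspace topology of $\Omega M$: on the left by definition of $\Omega^{ubg}(M)$, on the right by the stated convention. So the plan is to prove the two inclusions
\[
\bigcup_{n\geq 1}\Omega^{ubg}_\delta(M,n)\subset \Omega^{ubg}(M)\quad\text{and}\quad \Omega^{ubg}(M)\subset \bigcup_{n\geq 1}\Omega^{ubg}_\delta(M,n).
\]

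\textbf{First inclusion.} This is immediate from the construction. Every $\phi_n({\bf m})$ is a concatenation of minimal geodesic segments between consecutive entries. Each such segment is parameterized by arclength, so the speed is $1$ away from the marked points. The loop starts and ends at $x_0$, hence lies in $\Omega^{ubg}(M)$.

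\textbf{Reverse inclusion.} Here the work is a refinement argument, using Remark \ref{refining}. Let $(\gamma,r)\in\Omega^{ubg}(M)$ with marking $[z_0,\ldots,z_k]$ and subdivision $0=t_0<\cdots<t_k=r$. The segments have lengths $t_i-t_{i-1}=d_M(z_{i-1},z_i)$, and these lengths may exceed $\delta$. Subdivide each segment $[t_{i-1},t_i]$ into $N_i=\lceil (t_i-t_{i-1})/\delta\rceil$ equal pieces, inserting new marked points $\gamma(s)$ at the interior division times.

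By Remark \ref{refining}, each subpiece is again a minimizing, unit-speed geodesic segment. Its length is at most $\delta$, so consecutive new points satisfy $d_M\le\delta$. Since $\delta<\operatorname{conv}(M)$, each such pair of points lies in a convex ball. The minimal geodesic joining them is therefore unique, and it must coincide with the restriction of $\gamma$.

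The refined tuple $(x_0,\ldots,x_0)$ therefore defines an element ${\bf m}$ of $\widetilde G_K$ with $K=\sum N_i$, after removing repeats. Then $\phi_K({\bf m})$ concatenates the unique arclength geodesics between consecutive points, and these are exactly the pieces of $\gamma$. Hence $\phi_K({\bf m})=\gamma$, which proves $\gamma\in\Omega^{ubg}_\delta(M,K)$.

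\textbf{Main obstacle.} The only point needing care is the uniqueness step: the $\delta$-closeness must force the chosen geodesic to coincide with the original subsegment. Note also that the parameterization of a uniform path is determined by its marked points, so no reparameterization ambiguity arises.

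Finally, the union is increasing, via the inclusions $\Omega^{ubg}_\delta(M,n)\subset\Omega^{ubg}_\delta(M,n+1)$. Since no topology other than the subspace one is imposed, the equality of sets is a homeomorphism. This is the point to flag: the statement does not claim the colimit topology, which might differ from the subspace topology.
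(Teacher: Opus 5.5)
Your proof is correct and follows the same route as the paper. The paper's one-line proof is your reverse inclusion: refine an arbitrary uniform broken geodesic loop at interior points (Remark \ref{refining}) until consecutive marked points are within $\delta$, and take the identity map as the homeomorphism since both sides carry the subspace topology. You spell out details the paper leaves implicit: the choice $N_i=\lceil (t_i-t_{i-1})/\delta\rceil$, the uniqueness of short minimal geodesics that forces $\phi_K(\mathbf m)=\gamma$, and the easy forward inclusion.
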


\begin{proof}
    This is an obvious consequence of the fact that any broken geodesic loop in $\Omega M$ can be refined to a broken geodesic with successive marked points a distance at most $\delta$ of each other by Remark \ref{refining}.
\end{proof}

The maps $\phi_n$ induce a map $\phi'_\infty: H_\infty\rightarrow \Omega^{ubg}_c(M)\rightarrow\Omega^{ubg}(M)$,  where $\Omega^{ubg}_c(M)=\operatorname{colim}_n \Omega^{ubg}_\delta(M,n)$ in the colimit topology. The second composite is the identity map which is continuous in this direction. This is a map of $H$-spaces and we write
\begin{equation}\label{phinfinity}
 \phi_\infty : H_\infty\lrar \Omega^{ubg}(M)\hookrightarrow \Omega M \end{equation}
the map into the full loop space.
The following is the main result of this section. 

\begin{thm}\label{happrox}
 The map $\phi_\infty : H_\infty\rightarrow \Omega M$ is a homotopy equivalence of $H$-spaces.
In particular, $H_\infty$ is an $H$-space model for $\Omega M$ (\S\ref{models}). 
\end{thm}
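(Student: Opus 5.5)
Since $\phi_\infty$ is a monoid map (diagram \eqref{Hmap} commutes strictly), it suffices to prove that $\phi_\infty$ is a weak homotopy equivalence and that $H_\infty$ is in hCW. We follow the strategy of Theorem \ref{pgapprox}: realize $H_\infty$ as the fiber of a ``path--loop'' type map over $M$ with contractible total space. We also compare it with the corresponding fibration for $P^{ubg}(M)$.

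\textbf{Step 1: the path model.} Define $PH_n$ as the quotient of $\coprod_{k\le n}\{(x_0,x_1,\ldots,x_k)\in M^{k+1} : d_M(x_i,x_{i+1})\le\delta\}$ by the relation \eqref{relation1}, with the last entry free. Set $PH_\infty=\operatorname{colim}PH_n$. Endpoint evaluation $e([x_0,\ldots,x_k])=x_k$ is well defined and continuous, and its fiber over $x_0$ is $H_\infty$. Concatenation of uniform geodesic segments gives $\Phi:PH_\infty\to P^{ubg}(M)$ over $M$, restricting to $\phi_\infty$ on fibers.

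\textbf{Step 2: contractibility of $PH_\infty$.} We contract each $PH_n$ into $PH_{n+1}$ compatibly by retracting the last marked point. The homotopy moves $x_k$ along the unique geodesic to $x_{k-1}$; this is legitimate because $d_M(x_{k-1},x_k)\le\delta<\operatorname{conv}(M)$, and at the end of the move the entry is removed by \eqref{relation1}. Iterating this gives a contraction. To make it continuous on the colimit, we use a time-parametrized contraction in the spirit of Milnor's argument: on the infinite telescope, stage $j$ acts during $[1/2^{j+1},1/2^{j}]$. Compatibility with the identifications holds because when $x_k=x_{k-1}$ the move is stationary.

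\textbf{Step 3: quasifibration.} Show that $e:PH_\infty\to M$ is a quasifibration, or at least that it induces a long exact sequence compatible with that of $P^{ubg}(M)\to M$. Over a convex ball $U$ of radius $<\delta$ there is a local trivialization-type deformation. One sends $([x_0,\ldots,x_k],y)$, with $x_k\in U$ and $y\in U$, to $[x_0,\ldots,x_k,y]$. This gives, for $y'\in U$, maps $e^{-1}(y)\to e^{-1}(y')$ that are homotopy equivalences, with homotopy inverse given by appending $y$. The composite $[\ldots,x_k,y',y]$ is homotopic to $[\ldots,x_k]$ by sliding $y'$ to $y$ along the geodesic, which again uses convexity. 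Appending a point raises the filtration by one, so one works with the filtration $PH_n$ and applies the Dold--Thom criterion for quasifibrations over the colimit. This is the main obstacle. The plausible route is to show that each $e^{-1}(U)\to U$ is fiber homotopy equivalent to $U\times e^{-1}(y)$ in the colimit, and then use the Dold--Thom gluing lemma for a cover of $M$ by convex balls, which is paracompact.

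\textbf{Step 4: conclusion.} With $PH_\infty$ and $P^{ubg}(M)$ both contractible, the five lemma applied to the map of long exact sequences over the identity of $M$ shows that $\phi_\infty$ is a weak equivalence. Composing with Theorem \ref{pgapprox} gives the weak equivalence to $\Omega M$. For the upgrade to a homotopy equivalence, $H_\infty$ is a colimit of compact spaces $H_n$, each a quotient of semialgebraic-like subsets of $M^{k+1}$. These $H_n$ are ENRs, or at least are CW after triangulating the relation, and the inclusions $H_n\subset H_{n+1}$ are cofibrations, so $H_\infty$ is in hCW. Whitehead's theorem then finishes the proof.
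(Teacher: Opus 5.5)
There are genuine gaps in Steps 2 and 3, and as a result the hCW part of Step 4 is also unsupported.

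\textbf{Step 2.} The contraction as you describe it does not descend to $PH_n$. Any schedule keyed to the \emph{index} of a mark fails, because deleting an interior repeated entry via \eqref{relation1} shifts the indices, and hence the retraction times, of all later entries. For example, $(x_0,x_0,y)\sim(x_0,y)$. Suppose stage $j$ retracts $x_j$ onto $x_{j-1}$ during $[1/2^{j+1},1/2^j]$. Then at $t=1/4$ the first representative has already become $[x_0]$, while the second is still $[x_0,y]$. A ``last point first'' schedule fails in the same way on $(x_0,y,y)\sim(x_0,y)$. Your remark that the move is stationary when $x_k=x_{k-1}$ only covers a repeat in the current last position. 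The paper avoids this by making the contraction depend only on the arclength-parametrized broken geodesic. With $s_i=\sum_{j\le i}d_M(x_{j-1},x_j)$, it sets $H_t({\bf m})=[\gamma_{\bf m}((1-t)s_0),\ldots,\gamma_{\bf m}((1-t)s_k)]$. This works for three reasons:
\begin{itemize}
\item Consecutive distances are at most $(1-t)\,d_M(x_{i-1},x_i)\le\delta$.
\item Deleting a repeated mark changes neither $\gamma_{\bf m}$ nor the positions of the other marks, so $H_t$ respects \eqref{relation1}.
\item The number of marks never increases, so $H_t$ contracts each $PH_n$ compatibly and hence contracts $PH_\infty$, with no telescope needed.
\end{itemize}

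\textbf{Steps 3 and 4.} You leave Step 3 open, and the quasifibration and Dold--Thom machinery is unnecessary. The appending map you already wrote down is an honest lifting function. Take $U$ a convex ball of diameter $<\delta$. Send a pair $({\bf m},\lambda)$, with ${\bf m}=[x_0,\ldots,x_k]$ and $\lambda(0)=x_k$, to the path $t\mapsto[x_0,\ldots,x_k,\lambda(t)]$. This path starts at ${\bf m}$, because $[x_0,\ldots,x_k,x_k]={\bf m}$ by \eqref{relation1}. It stays in $e^{-1}(U)$, since $d_M(x_k,\lambda(t))<\delta$. It depends continuously on the data. The rise in filtration is irrelevant because the lift lands in $PH_\infty$. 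Hence each $e^{-1}(U)\to U$ is a Hurewicz fibration, and Theorem \ref{Dold} makes $e$ a Hurewicz fibration; this is Lemma \ref{lem:free loop fibration}. This also repairs Step 4. Your claim that the $H_n$ are compact ENRs or triangulable is not justified, and compactness fails outright for non-compact $M$. Once $e$ is a genuine Hurewicz fibration with contractible total space over the manifold $M$, Sch\"on's result \cite{schon} gives $H_\infty\in$ hCW directly. That is how the paper concludes; a quasifibration alone would not let you invoke it.
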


\begin{proof} Define the following subspace of marked paths
\begin{equation}\label{markedpaths}
\tilde P_k = \{(x_0,x_{1},\ldots, x_k)\in M^{k+1}\ |\ d_M(x_i,x_{i-1})\ \leq\delta, 1\leq i\leq k\}
\end{equation}
where again $x_0=v_0$ is a fixed basepoint,
and the associated ``path-space''
\begin{equation}\label{phn}
PH_n= \bigsqcup_{k=0}^n \tilde P_k/_{\sim}
\end{equation}
where the relation $\sim$ is as defined in \eqref{relation1}. Note that $\tilde G_k\subset \tilde P_k$, and of course $H_n\subset PH_n$.
We will show that $PH_n$ is contractible.
For \({\bf m}=[x_0,\ldots,x_k]\), $k\leq n$, put
\[
d_i=d_M(x_{i-1},x_i),\qquad s_i=\sum_{j=1}^i d_j,\quad s_0=0,
\]and let \(\gamma_{\bf m}:[0,s_k]\to M\) traverse the associated broken geodesic at unit speed. Define
\[
H_t({\bf m})=
\bigl[\gamma_{\bf m}((1-t)s_0),\ldots,\gamma_m((1-t)s_k)\bigr].
\]The distance between consecutive new markings satisfies
\[
d_M\!\left(\gamma_{\bf m}((1-t)s_{i-1}),
            \gamma_{\bf m}((1-t)s_i)\right)
\leq (1-t)(s_i-s_{i-1})
=(1-t)d_i\leq\delta.
\]
The inequality holds because distance is at most the length of the intervening subpath. If two adjacent markings coincide, deleting one removes a zero-length
segment and leaves both the arclength-parametrized path and the
parameters of all remaining markings unchanged. Thus \(H_t\) respects
the relation \(\sim\) and descends to a continuous contraction of $PH_n$.

Next, we consider the projection maps
$PH_n\lrar M$ sending 
$[x_0,\ldots, x_k]\mapsto x_k$, $k\leq n$. These maps are continuous and induce a continuous map
$\pi : PH_\infty\rightarrow  M$. 
We claim in the next Lemma  that this projection is a fibration. Since it is, and since it maps to the path fibration over $M$, and since $PH_\infty$ is contractible in its colimit topology, the map between fibers $\phi_\infty$ is a weak homotopy equivalence as claimed. To go from weak equivalence to a genuine equivalence, we again resort to \cite{schon} to show that $H_\infty\in$ hCW. Indeed, since the base and total space of $\pi: PH_\infty\rightarrow M$ are in hCW, then so is the fiber $H_\infty$.
\end{proof}

\begin{lem}\label{lem:free loop fibration}
The projection $\pi : PH_\infty \to\ M$ is locally a fibration and
a surjection. Thus the projection $\pi$ is a fibration by the Dold-Hurewicz theorem (Theorem \ref{Dold}).
\end{lem}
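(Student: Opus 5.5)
Surjectivity is immediate. $M$ is path-connected, so any $x\in M$ is joined to $x_0$ by a path. Cover that path by finitely many balls of radius $\delta$ and pick consecutive points along it at distance at most $\delta$. This gives a tuple $(x_0,\ldots,x_k=x)\in\tilde P_k$, and its class in $PH_k\subset PH_\infty$ maps to $x$.

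For the local fibration property, I will use the cover of $M$ by the open balls $U_p=B(p,\delta/2)$, $p\in M$. These lie well inside the convexity radius, since $\delta\le \operatorname{conv}(M)/4$. Over each $U=U_p$ I will write down an explicit lifting function, in the spirit of the proof of Theorem \ref{pgapprox}, and then invoke the fact that a map admitting a continuous lifting function is a Hurewicz fibration (\cite{spanier}, chap.~2). Let ${\bf m}=[x_0,\ldots,x_k]\in\pi^{-1}(U)$ and let $\lambda:[0,1]\to U$ be a path with $\lambda(0)=x_k$. The naive lift $t\mapsto[x_0,\ldots,x_k,\lambda(t)]$ is not acceptable, because at $t=0$ it gives $[x_0,\ldots,x_k,x_k]=[x_0,\ldots,x_k]$ only after the relation \eqref{relation1}. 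Since $d_M(x_k,\lambda(t))<\delta$, every tuple $(x_0,\ldots,x_k,\lambda(t))$ lies in $\tilde P_{k+1}$. So I define
\[
\Lambda({\bf m},\lambda)(t)=[x_0,\ldots,x_k,\lambda(t)]\in PH_{k+1}\subset PH_\infty .
\]
At $t=0$ the relation \eqref{relation1} collapses the repeated last entry, so $\Lambda({\bf m},\lambda)(0)={\bf m}$. The projection of the lift is $\lambda(t)$, as required.

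The main obstacle is continuity of $\Lambda$ as a map $\pi^{-1}(U)\times U^I\times I\to PH_\infty$, where $U^I$ denotes paths in $U$ and $PH_\infty$ carries the colimit topology of the quotients $PH_n$. First I must check that $\Lambda$ is well defined on classes. If a representative of ${\bf m}$ has a repeated interior entry, deleting it commutes with appending $\lambda(t)$. The only subtle case is a representative whose last entries repeat, such as $(\ldots,x_k,x_k)$. This is again harmless, because the appended tuple reduces by \eqref{relation1} to the same class. Next, on each $\tilde P_k\times U^I\times I$ the map is the composite of a continuous map into $\tilde P_{k+1}$ with the quotient map. To descend to $PH_\infty$ I need $\pi^{-1}(U)\times U^I\times I$ to carry the quotient/colimit topology induced from $\coprod_k(\tilde P_k\cap\pi^{-1}(U))\times U^I\times I$. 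For this I will use that we work in $kTop$, where products with the locally compact space $I$ preserve quotients and colimits, and use $k$-ification for the factor $U^I$ (Appendix B contains the analogous argument for continuity of the multiplication). Each $PH_n$ is a quotient of a compact Hausdorff space, since $M$ is replaced locally by closed balls, so the colimit is a sequential colimit of closed inclusions. Hence compact subsets of $PH_\infty$ lie in some $PH_n$, and compact-open/$k$-topologies behave well. This is the step I would verify most carefully. An alternative is to lift only over the finite stages, $\pi_n:PH_n\to M$ lifting into $PH_{n+1}$, and conclude in the colimit.

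Finally, $M$ is metrizable, hence paracompact Hausdorff. Since $\pi$ is a fibration over each $U_p$ of an open cover, Theorem \ref{Dold} shows that $\pi$ is a fibration.
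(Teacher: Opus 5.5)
Your proof is correct and follows the paper's argument: the same subdivision argument for surjectivity, the same lift $t\mapsto[x_0,\ldots,x_k,\lambda(t)]$ over small balls, and Theorem \ref{Dold}. Your extra care about continuity in the quotient/colimit topology is warranted, since the paper only appeals to continuous dependence of geodesics, but it is easier than you expect. Factor the adjoint through $({\bf m},\lambda,t)\mapsto({\bf m},\lambda(t))$ and use that $q\times\mathrm{id}_U$ is a quotient map because $U$ is locally compact. No compactness of the $PH_n$ is then needed, which is just as well, since that claim requires $M$ to be complete.
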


\begin{proof}
Choose an open cover $\mathcal U$ of $M$ by geodesically convex
sets of diameter strictly less than $\delta$.
We first verify surjectivity. Given $y\in M$, choose a path
$\alpha\colon I\to M$ from $x_0$ to $y$ and a subdivision
$
0=t_0<t_1<\cdots<t_k=1
$
such that
$
d_M\bigl(\alpha(t_{i-1}),\alpha(t_i)\bigr)<\delta$, $(1\leq i\leq k)$.
Then
$$
[v_0,\alpha(t_1),\ldots,\alpha (t_k)=y]\in PH_\infty
$$
and this element projects to $y$.

Next we show that
$\pi^{-1}(U)\rightarrow U$ is a Hurewicz fibration, for any $U$ chosen in this open cover.
The proof follows that of Proposition \ref{distinguished} and relies on showing the existence of slices or local lifting functions (see \S\ref{collection}).
Let 
$m=[x_0,x_1,\dots,x_k=u]\in \pi^{-1}(U)$, and $\lambda:I\to U,\ \lambda(0)=u.$
Since \(U\) is geodesically convex, for each \(t\in I\) there is a unique geodesic segment from \(u\) to \(\lambda (t)\) lying in \(U\). Define 
$$
\widetilde\lambda_m(t):=[v_0,x_1,\dots,x_k=u,\lambda(t)].
$$
This is a path in \(\pi^{-1}(U)\) lifting \(\lambda\) and starting at \(m\). Note that $d(u,\lambda (t))<\delta$ for all $t$ since  diam$(U)<\delta$. The assignment $
(m,\lambda)\mapsto \widetilde\lambda_m$
is continuous by the continuous dependence of geodesic segments on endpoints. Hence \(\pi^{-1}(U)\to U\) admits a lifting function, so it is a fibration. Since this holds for every open set of the cover, the map $\pi$ being obviously surjective, it must be a fibration by Theorem \ref{Dold}.
\end{proof}

Finally, we check the following assertion made in Theorem \ref{main}.

\begin{lem}\label{sigmacompact}
\(\Omega^{ubg}(M)\) is $\sigma$-compact
\end{lem}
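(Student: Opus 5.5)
The plan is to write $\Omega^{ubg}(M)$ as a countable union of compact subsets, using the lemma just proved: $\Omega^{ubg}(M)=\bigcup_{n\geq 1}\Omega^{ubg}_\delta(M,n)$ as a subspace of $\Omega M$. Since $\Omega^{ubg}_\delta(M,n)$ is the image under the continuous map $\phi_n$ of (\ref{imageonto}), it suffices to show that $H_n$ is $\sigma$-compact. A continuous image of a compact set is compact, so a countable compact exhaustion of $H_n$ pushes forward to one of $\Omega^{ubg}_\delta(M,n)$. Taking the union over $n$ then gives a countable family of compact subsets covering $\Omega^{ubg}(M)$.

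For $\sigma$-compactness of $H_n$, I will use that $H_n$ is a quotient of the finite disjoint union $\coprod_{k\leq n}\tilde G_k$, so it is enough to show each $\tilde G_k$ is $\sigma$-compact. The set $\tilde G_k\subset M^{k+1}$ is cut out by the non-strict inequalities $d_M(x_i,x_{i+1})\leq\delta$ together with the conditions that the first and last entries equal $x_0$. Since $d_M$ is continuous, $\tilde G_k$ is closed in $M^{k+1}$. I can even avoid needing the whole of $M$ to be exhausted: every entry satisfies $d_M(x_0,x_i)\leq i\delta\leq k\delta$, so $\tilde G_k$ is a closed subset of $\overline{B}(x_0,k\delta)^{k+1}$.

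The main point, and the only real obstacle, is compactness of closed metric balls $\overline{B}(x_0,R)$ in $M$. If $M$ is compact this is immediate. In general I will argue as follows.
\begin{itemize}
\item Positive convexity radius gives a uniform $\epsilon$ (say $\epsilon=\delta$) such that every closed ball of radius less than $\epsilon$ is compact, since it lies inside a geodesic ball identified with a ball in the tangent space.
\item A uniform lower bound on the radius of compact balls implies completeness of $(M,d_M)$: a Cauchy sequence eventually lies in one such compact ball.
\item Hopf--Rinow then gives compactness of every closed bounded set.
\end{itemize}
If one prefers to avoid this, there is a fallback. Every manifold (second countable, locally compact) is $\sigma$-compact, hence so is $M^{k+1}$ and any closed subset of it. This already makes $\tilde G_k$ $\sigma$-compact, which is all that is needed.

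Finally, with $\tilde G_k=\bigcup_j K_{k,j}$ a union of compacts, the images of $K_{k,j}$ in $H_n$ are compact. Composing with $\phi_n$, the sets $\phi_n(K_{k,j})$ are compact in $\Omega M$ and cover $\Omega^{ubg}_\delta(M,n)$, for $k\leq n$ and $j\in\mathbb N$. Taking all $n$ yields a countable compact cover of $\Omega^{ubg}(M)$. The one thing to double-check is continuity of $\phi_n$ into the compact-open topology, which is asserted earlier in the paper (via continuous dependence of geodesic segments on endpoints in convex balls, since $\delta<\operatorname{conv}(M)$).
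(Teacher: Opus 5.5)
Your fallback argument is correct and is essentially the paper's own proof. You decompose $\Omega^{ubg}(M)=\bigcup_n\Omega^{ubg}_\delta(M,n)$, note that $\tilde G_k$ is closed in the $\sigma$-compact space $M^{k+1}$, and push $\sigma$-compactness forward through the quotient map to $H_n$ and then through the continuous map $\phi_n$. You should make this the main argument and drop the Hopf--Rinow route, which contains a false step.

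The false step is the claim that positive convexity radius gives a uniform $\epsilon$ for which all closed $\epsilon$-balls are compact, and hence completeness. This fails under the paper's definition of convexity. Take the open unit disk $D\subset\mathbb{R}^2$ with the flat metric:
\begin{itemize}
\item $d_D$ is the Euclidean distance, and straight segments are the unique minimal geodesics.
\item Every metric ball $\{q\in D : |q-c|<\rho\}$ is an intersection of two convex sets, hence geodesically convex. So $\operatorname{conv}(D)=\infty$, and $D$ is also simply connected, so it meets all the hypotheses.
\item Yet for every $\epsilon>0$, the closed ball $\{q\in D : |q-p|\le\epsilon\}$ is non-compact as soon as $1-|p|<\epsilon$, and $D$ is incomplete.
\end{itemize}
The faulty justification is that a convex ball ``is identified with a ball in the tangent space.'' Convexity only guarantees that each point of $B(p,\rho)$ is reached from $p$ by a minimal geodesic. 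It does not guarantee that $\exp_p$ is defined on the whole closed tangent ball.

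Your closed-ball argument does work when $M$ is complete (e.g. compact). In that case it gives the sharper conclusion that each $\Omega^{ubg}_\delta(M,n)$ is compact. Under the stated hypotheses, though, only the $\sigma$-compactness of $M^{k+1}$ is available, which is exactly what the paper uses.
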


\begin{proof}
Write 
$\displaystyle 
\Omega^{ubg}(M)=\bigcup_{n\ge 1}\Omega^{ubg}_\delta(M,n)$ as introduced in \eqref{imageonto}. If $M$ is compact, then each \(\Omega^{ubg}_\delta(M,n)\) is compact
 by Corollary \ref{compact},
and \(\Omega^{ubg}_\delta(M)\) is $\sigma$-compact by construction.
In general, any finite dimensional manifold \(M\) is \(\sigma\)-compact, 
then so is every finite product \(M^{n+1}\). Since
\(\widetilde G_n\) in \eqref{markedloops}  is a closed subspace of \(M^{n+1}\), it is also \(\sigma\)-compact. Its quotient under
the identification of repeated consecutive points is therefore again \(\sigma\)-compact.
As a final consequence,  \(\Omega^{ubg}(M)\) is a countable union of \(\sigma\)-compact spaces, and hence is
itself \(\sigma\)-compact.
\end{proof}


\section{Combinatorial Approximations: the Group Model}\label{groupmodel}

We will now refine our $H_\infty$ model and replace it by a group model $G_\infty$. The proof of this will be much more demanding since the contractibility of the corresponding path space is much less straightforward.

The starting point is the family of spaces of ``marked loops'' $\tilde G_k$ defined in \eqref{markedloops}, so we can set
\begin{equation}\label{Gn}
 G_n := \bigsqcup_{k=0}^n \tilde G_k/_{\approx} 
 \end{equation}
where $\approx$ is the equivalence relation generated by the relation $\sim$ in \eqref{relation1}, corresponding to the removal of $x_i$ when $x_i=x_{i+1}$, and of the ``Milnor relation''
\begin{equation}\label{relation2}(x_0, \ldots, x_i, \cdots, x_k) \sim_M (x_0,\ldots,
\hat x_i, \cdots, x_k)\ \ \ \ \ \quad\text{whenever }x_{i-1}=x_{i+1}
\end{equation}  
The relation $\sim_M$ is  the one defined in \cite{Milnor} for $M$ a connected simplicial complex. We set $G_\infty = \operatorname{colim}_nG_n$.

As for the $H$-space model, we consider a space of ``marked paths'' in this combinatorial setup by setting
$$PG_n= \bigsqcup_{k=0}^n \tilde P_k/_{\approx}$$
where $\tilde P_k$ is as defined in \eqref{markedpaths}, with $n=\infty$ allowed and $PG_\infty$ topologized as a direct limit. The coordinate projection maps 
\begin{equation}\label{pik}\pi_k: \tilde P_k
\to\ M \ \ ,\ \ \pi_k(x_0, \cdots,
x_k)=x_k
\end{equation}
are continuous and compatible with
the equivalence relation $\approx$, they induce a continuous map 
\begin{equation}\label{eval}
\pi: PG_\infty \longrightarrow \ M\ ,\ 
[x_0,\ldots, x_k]\mapsto x_k
\end{equation}
The following Theorem is the main result of this section (and of \cite{bc}).

\begin{thm}\label{reducedapprox}
Assume that $M$ is a path-connected Riemannian manifold of positive convexity radius, with \(\delta\) satisfying \eqref{delta}. Then
\begin{enumerate}
  \item $G_\infty$ is a topological group, and $\pi$ is the
projection in a principal $G_\infty$-bundle.
  \item The space $PG_\infty$ is contractible.
\end{enumerate}
In particular, $G_\infty$ is a group model for $\Omega M$ and $BG_\infty$ is weakly equivalent to $M$.
\end{thm}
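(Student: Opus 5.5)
Part (1) comes first: the group structure on $G_\infty$, then the principal bundle.

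Elements of $G_\infty$ are reduced words $[x_0,x_1,\ldots,x_{k-1},x_0]$. Multiplication is juxtaposition, $[x_0,\ldots,x_0]\cdot[x_0,\ldots,x_0]$, with the middle repeated $x_0$ collapsed by $\sim$. The unit is $[x_0]$. The inverse is reversal, $[x_0,x_1,\ldots,x_{k-1},x_0]^{-1}=[x_0,x_{k-1},\ldots,x_1,x_0]$. The product of a word with its reversal reduces to $[x_0]$: the word is $[\ldots,x_1,x_0,x_1,\ldots]$, and the Milnor relation \eqref{relation2} deletes the middle $x_0$, which creates a repeat $x_1,x_1$ that $\sim$ collapses, and the cancellation then propagates.

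Continuity of multiplication on the colimit follows as in Appendix B for $H_\infty$. Continuity of inversion is clear, since reversal is a homeomorphism of each $\tilde G_k$ compatible with $\approx$.

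The same juxtaposition gives a free right action $PG_\infty\times G_\infty\to PG_\infty$, $[x_0,\ldots,x_k]\cdot g$. To match the $\pi$-fibers we should instead use the left action $g\cdot[x_0,\ldots,x_k]$. This preserves $\pi$. It is transitive on fibers, because if $\pi(p)=\pi(q)=y$ then $q\cdot p^{-1}$ (append the reversal of $p$) is a loop word.

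For local triviality, use the cover of Lemma \ref{lem:free loop fibration} by convex sets $U$ of diameter $<\delta$, each containing a point $u_U$. Take a section $s_U\colon U\to\pi^{-1}(U)$, $s_U(y)=[w_U,u_U,y]$, where $w_U$ is a fixed word ending at $u_U$. The map $G_\infty\times U\to\pi^{-1}(U)$, $(g,y)\mapsto g\cdot s_U(y)$, is then a homeomorphism. Its inverse is $p\mapsto(p\cdot s_U(\pi p)^{-1},\pi p)$. Continuity of both maps on the colimit topology needs a compactness argument (compact sets lie in some finite stage), which I take from the appendix.

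Part (2), contractibility of $PG_\infty$, is the main obstacle. The contraction $H_t$ used for $PH_n$ does not descend along $\sim_M$. A word $[\ldots,a,b,a,\ldots]$ is identified with $[\ldots,a,\ldots]$, but shrinking along arclength sends the backtracking spike to different points than the reduced path does. I will follow Milnor's strategy for the free-group model instead.

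First, show $PG_\infty$ is weakly contractible. Any map of a sphere $S^q\to PG_\infty$ lands in some $PG_n$. There I use a "last point homotopy": move the final marking $x_k$ along the geodesic to $x_{k-1}$. This respects both relations, because the deformation is applied only to the last entry. Once the last two entries coincide, $\sim$ drops the last entry. Iterating $n$ times contracts $PG_n$ into $PG_{n-1}$ and eventually to $[x_0]$.

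The subtlety is that "the last entry" is well defined on reduced words, but near the strata where reduction happens the length jumps. So I would define the homotopy on $\tilde P_k$ and check it respects $\approx$. Moving $x_k$ toward $x_{k-1}$ keeps $d(x_{k-1},x_k)\le\delta$. If $x_{k-2}=x_k$, then $\sim_M$ identifies the word with the one without $x_{k-1}$. In that case the homotopy acts differently on the two representatives, and it must be patched: move $x_k$ toward $x_{k-1}$ only in a way that is consistent with both. This consistency check is where I expect the real work.

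If the naive version fails, the fallback is to contract $PG_{n}$ into $PG_{n-1}$ only up to homotopy via a filtration-wise argument, and then pass to the colimit. Finally, contractibility follows from weak contractibility plus $PG_\infty\in$ hCW. For that I use \cite{schon} applied to the fibration $\pi$ (by part (1)), with base $M$ and fiber $G_\infty$, or alternatively a CW-structure argument on the colimit.

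The concluding statements follow from Corollary \ref{recognition1}: $\pi$ is a principal $G_\infty$-bundle with contractible total space over the simply connected space $M$. Hence $G_\infty$ is a group model for $\Omega M$, and $BG_\infty\simeq M$ weakly, because $PG_\infty\to M$ is then a universal bundle.
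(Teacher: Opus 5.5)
Your part (1) is essentially the paper's argument: juxtaposition, reversal as inverse, and trivializations $(g,y)\mapsto g\cdot s_U(y)$ with inverse $p\mapsto (p\cdot s_U(\pi p)^{-1},\pi p)$. The ``right action'' $[x_0,\ldots,x_k]\cdot g$ you mention first is not defined unless $x_k=x_0$, but you discard it.

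The genuine gap is in part (2). Your last point homotopy does not descend to $PG_n$, and more care in the consistency check will not fix it, because it already fails for the repetition relation $\sim$. The tuples $(x_0,\ldots,x_{k-1},x_{k-1})$ and $(x_0,\ldots,x_{k-1})$ represent the same class. On the first, your homotopy is stationary, since the geodesic from $x_{k-1}$ to itself is constant. On the second, it moves $x_{k-1}$ toward $x_{k-2}$, so one class is sent to two different classes at intermediate times. Likewise $(\ldots,a,b,a)\approx(\ldots,a)$ under $\sim_M$, and the two representatives move along different trajectories. The underlying obstruction is that the reduced length of a word, and hence its last entry, jumps on the degenerate strata. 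So no homotopy acting on the last coordinate, or truncating along arclength as you observed for $H_t$, is compatible with $\approx$. The ``filtration-wise fallback'' is not an argument.

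Your final step also fails. The cited result of \cite{schon} gives $\mathrm{hCW}$ for the fiber from the total space and base. The direction you need (fiber and base $\Rightarrow$ total space) requires $G_\infty\in\mathrm{hCW}$ beforehand. The paper only obtains that as a consequence of the contractibility of $PG_\infty$, so the argument is circular.

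The missing idea is to avoid defining any contraction on the quotient and instead use the cell-like structure of the filtration, as Milnor does.

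\begin{enumerate}
\item $PG_k$ is obtained from $PG_{k-1}$ by attaching $\tilde P_k$ along its degenerate locus $\tilde D_k=\bigcup_i R_i\cup\bigcup_j Q_j$. Here $R_i=\{x_{i-1}=x_i\}$, $Q_j=\{x_{j-1}=x_{j+1}\}$, and $\tilde P_k\setminus \tilde D_k\cong PG_k\setminus PG_{k-1}$.
\item Each $R_i$ (resp.\ $Q_j$) is a strong deformation retract of $\tilde P_k$. Move $x_i$ to $x_{i-1}$ (resp.\ $x_{j+1}$ to $x_{j-1}$) along the geodesic, and drag the later markings along using Lemma \ref{pushlaongpaths} so that the constraints $d_M(x_{r-1},x_r)\le\delta$ persist.
\item Step 2 gives the NDR and cofibration properties (Lemmas \ref{ndrpair} and \ref{quotient-ndr}).
\item All finite intersections of the $R_i$ and $Q_j$ contain the constant tuple and are contractible, so $\tilde D_k$ is contractible by Lemma \ref{nerve}.
\item Since $\tilde P_k$, $\tilde D_k$ and, inductively, $PG_{k-1}$ are contractible, so is $PG_k$ (Lemma \ref{fourthcontractible}).
\item $PG_\infty$ is then contractible as a colimit of contractible spaces along cofibrations (Lemma \ref{nested}).
\end{enumerate}

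This route gives genuine contractibility directly, so the $\mathrm{hCW}$ issue never arises.
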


As in the case of \cite{Milnor}, the most subtle point is to prove (2). Part (1) is covered by Lemma \ref{principalGbundle}, while part (2) is covered by Proposition \ref{contractibility2} in a separate section.

For each $j,k\geq 0$, consider the pairing
$$\mu: \tilde G_k\times \tilde P_j \lrar\ \tilde P_{j+k}$$ given by concatenation as follows
\begin{equation}\label{concat2}
\mu((x_0, x_1,\ldots, x_{k-1}, x_0), (y_{0}=x_0,y_1\cdots, y_j)) = (x_0,\cdots,
x_{k-1},x_{0},y_1,\cdots, y_j)
\end{equation}
Observe that the endpoint of the path does not change under this composition by an element in $\tilde G_k$. The maps $\mu$ induce an action after passing to colims.

\begin{lem} \label{lem:continuity}
Concatenation induces a continuous map $\mu: G_\infty\times  PG_\infty
\to\ PG_\infty$ and a commutative
diagram.
$$\xymatrix{G_\infty\times G_\infty\ar[d]\ar[r]^{\mu_|}&G_\infty\ar[d]\\
 G_\infty\times PG_\infty\ar[r]^\mu&PG_\infty}$$
The restriction $\mu_|$ makes $G_\infty$ into a topological group
with identity given by $[x_0]$, while the map $\mu$ gives an action of
$G_\infty$ on $PG_\infty$. The orbit space of the action $PG_\infty/
G_\infty$ is homeomorphic to $M$.
\end{lem}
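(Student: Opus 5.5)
The plan is to work at the level of tuples and then pass to quotients and colimits, following Milnor's argument for the free group model of a simplicial complex \cite{Milnor}. First, concatenation \eqref{concat2} is compatible with $\approx$. A generating move $\sim$ or $\sim_M$ applied inside either factor remains the same generating move in the concatenated tuple. This holds because the junction entry $x_0$ is retained, and the move in the second factor involves only consecutive indices of $y$, which stay consecutive after concatenation. So each $\mu:\tilde G_k\times\tilde P_j\to\tilde P_{j+k}$ descends to $G_k\times PG_j\to PG_{j+k}$, and restricting to $\tilde G_j\subset\tilde P_j$ gives the multiplication on $G_\infty$. The square in the statement commutes on the nose, since it commutes already on tuples. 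Associativity holds because juxtaposition of tuples is associative, and $[x_0]$ is a two-sided unit by deleting the repeated $x_0$ via \eqref{relation1}.

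For inverses I take $\iota[x_0,x_1,\ldots,x_{k-1},x_0]=[x_0,x_{k-1},\ldots,x_1,x_0]$. This is well defined, because reversal preserves the distance condition in \eqref{markedloops} and sends each $\sim$ or $\sim_M$ move to a move of the same type. The product $[x_0,\ldots,x_{k-1},x_0,x_{k-1},\ldots,x_1,x_0]$ collapses to $[x_0]$ by repeated Milnor moves: the middle $x_0$ has equal neighbours $x_{k-1}$, so it is deleted; this leaves $x_{k-1},x_{k-1}$ adjacent, which merge under $\sim$; the next layer then has equal neighbours, and the process continues inward. Reversal is clearly continuous on each $\tilde G_k$, so $\iota$ is continuous on each $G_n$ and therefore on the colimit. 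The same reduction shows that $\mu$ is an action: $[x_0]$ acts trivially and $(gh)p=g(hp)$.

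The orbit space is identified using $\pi$ from \eqref{eval}, which is continuous and surjective by the argument in Lemma \ref{lem:free loop fibration}. It is constant on orbits because concatenation does not change the last entry. It is injective on orbits: if $p=[x_0,\ldots,x_k]$ and $q=[x_0,y_1,\ldots,y_j]$ have $x_k=y_j$, then $g=[x_0,y_1,\ldots,y_j=x_k,x_{k-1},\ldots,x_1,x_0]\in G_\infty$ satisfies $g\cdot p=q$ by the same cancellation as above. So $\bar\pi:PG_\infty/G_\infty\to M$ is a continuous bijection. To show it is a homeomorphism, I will build local sections. Take the convex cover $\mathcal U$ of Lemma \ref{lem:free loop fibration} and, for $U\in\mathcal U$, fix $p_U=[x_0,\ldots,u]$ with $u\in U$. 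Define $s_U(y)=[x_0,\ldots,u,y]$; it is continuous because $d(u,y)<\delta$, and $\pi\circ s_U=\mathrm{id}$. Then $\bar\pi^{-1}|_U$ equals the projection composed with $s_U$, which is continuous. This gives the homeomorphism, and the maps $(g,y)\mapsto g\cdot s_U(y)$ supply local trivialisations for Lemma \ref{principalGbundle}.

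I expect the main obstacle to be \emph{continuity of $\mu$ on the colimit}. In $k\mathbf{Top}$ we have $\operatorname{colim}(G_n\times PG_m)\cong G_\infty\times PG_\infty$, because the filtrations consist of closed subspaces of compactly generated spaces; this is the product compatibility of Appendix B. With that identification, continuity reduces to continuity on each $G_n\times PG_m$. That in turn reduces to the fact that a product of quotient maps of tuple spaces is a quotient map in $k\mathbf{Top}$. The quotient map is $\coprod\tilde G_k\times\coprod\tilde P_j\to G_n\times PG_m$, and here I would invoke local compactness of the $\tilde P_j$, which are closed subsets of $M^{j+1}$. Continuity of inversion is handled the same way. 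The topological-group structure also needs the multiplication on $G_\infty\times G_\infty$, and this is covered by the same argument.
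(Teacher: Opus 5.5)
Your proposal is correct and follows essentially the paper's route: continuity of $\mu$ and $\mu_|$ via the $k\mathbf{Top}$ product--colimit argument of Appendix B, identification of the orbits with the fibers of $\pi$, and the homeomorphism $PG_\infty/G_\infty\cong M$ via the local sections $y\mapsto[x_0,\ldots,u,y]$ that underlie the trivializations of Lemma \ref{principalGbundle}. You also make explicit several steps the paper leaves implicit: compatibility of concatenation with $\approx$, inverses by reversal and Milnor cancellation, and transitivity on fibers. Finally, your appeal to local compactness is unnecessary, since $\times_k$ already preserves quotient maps in $k\mathbf{Top}$.
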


\begin{proof} The argument for the continuity of $\mu$, and its restriction $\mu_|$, is analogous to the argument outlined in Appendix B, so it is skipped. 
On the other hand, the endpoint projection $PG_\infty\rightarrow M$ is constant on orbits, which are precisely the fibers, so it induces a quotient homeomorphism. This gives a continuous bijection $PG_\infty/G_\infty\rightarrow M$ which is a homeomorphism by Lemma \ref{principalGbundle} next.
\end{proof}

As in \eqref{productpath}, we compose paths from left to right. In other words, if $\gamma_1$ and $\gamma_2$ are two composable geodesic paths, meaning the endpoint of $\gamma_1$ is the starting point of $\gamma_2$, we write $\gamma_1\gamma_2$ their composition or product. Conjugation of a loop $g$ by a path $\gamma$ is written
$g^\gamma := \gamma g\gamma^{-1}$.

\begin{lem}\label{principalGbundle} (Theorem \ref{reducedapprox}, (1)).
The projection $\pi: PG_\infty \to\ M$ is the projection in a
principal $G_\infty$-bundle.
\end{lem}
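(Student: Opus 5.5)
To prove that $\pi\colon PG_\infty\to M$ is a principal $G_\infty$-bundle, I will cover $M$ by geodesically convex open sets $U$ of diameter $<\delta$, as in the proof of Lemma \ref{lem:free loop fibration}, and build over each such $U$ a continuous local section. I then convert it into a $G_\infty$-equivariant trivialization $\pi^{-1}(U)\cong G_\infty\times U$. Freeness of the action, the fact that the fibers are exactly the orbits, and the homeomorphism $PG_\infty/G_\infty\cong M$ left open in Lemma \ref{lem:continuity} will all follow from these trivializations.

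\textbf{Local sections.} For each $U$ I fix a center $u_U\in U$ and a marked path ${\bf p}_U=[x_0,\ldots,x_j=u_U]\in PG_\infty$. Such a path exists by the surjectivity argument of Lemma \ref{lem:free loop fibration}. The section is
\[
s_U(y)=[x_0,\ldots,u_U,y],\qquad y\in U.
\]
This is legitimate because $d_M(u_U,y)<\delta$, and it is continuous into $\tilde P_{j+1}$ and hence into $PG_\infty$.

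\textbf{Trivialization.} The trivialization is $\Phi_U\colon G_\infty\times U\to\pi^{-1}(U)$, $\Phi_U(g,y)=\mu(g,s_U(y))$. It is continuous by Lemma \ref{lem:continuity} and equivariant by construction. For the inverse, given ${\bf m}=[x_0,\ldots,x_k]$ with $x_k=y\in U$, I set
\[
g({\bf m})=[x_0,\ldots,x_k,u_U,x_{j-1},\ldots,x_1,x_0].
\]
This is the marked path ${\bf m}$ followed by $s_U(y)^{-1}$. All consecutive distances are at most $\delta$, so $g({\bf m})\in\tilde G$. Then $\Psi_U({\bf m})=(g({\bf m}),\pi({\bf m}))$. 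The relations $\sim$ and $\sim_M$ act only on interior entries, and the appended tail is fixed, so $\Psi_U$ descends to the quotient. Both compositions are identities in $G_\infty$: concatenating $g({\bf m})$ with $s_U(y)$ produces the palindromic tail $y,u_U,x_{j-1},\ldots,x_0,x_1,\ldots,u_U,y$, which collapses by repeated application of the Milnor relation \eqref{relation2} (and $\sim$ when $y=u_U$). The same mechanism gives $g\,g^{-1}=[x_0]$ in $G_\infty$, which is exactly why $G_\infty$ is a group while $H_\infty$ is only a monoid.

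\textbf{Main obstacle.} The step I expect to be hardest is continuity of $\Psi_U$ with respect to the colimit and quotient topologies. Each $\tilde P_k\cap\pi^{-1}(U)\to\tilde G_{k+j+1}$ is clearly continuous, but I must check that the quotient by $\approx$ and the colimit over $k$ commute with these restrictions and with products by $U$. I would handle this as in Appendix B, using that $M$ is locally compact Hausdorff so that products with $U$ preserve quotient maps in $kTop$, and that $\pi^{-1}(U)$ is open, so it carries the colimit of the open subsets $\pi^{-1}(U)\cap PG_n$. With local triviality established, $\pi$ is an open map, hence a quotient map, and this gives the homeomorphism $PG_\infty/G_\infty\cong M$.
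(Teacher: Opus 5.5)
Your proposal is correct and follows essentially the same route as the paper. Your trivialization $\Phi_U(g,y)=g\cdot s_U(y)$ and inverse ${\bf m}\mapsto({\bf m}\cdot s_U(y)^{-1},\pi({\bf m}))$ are exactly the paper's $\phi_p(g,x)=g\cdot e_p\cdot[p,x]$ and $\theta_p(e)=(e\cdot[\pi(e),p]\cdot e_p^{-1},\pi(e))$. The paper additionally writes out the transition functions $g_{p,q}(x)=e_p\cdot[p,x,q]\cdot e_q^{-1}$ and their cocycle identities, where you instead get principality from equivariance of the trivializations; your palindromic Milnor-cancellation check and your treatment of continuity in the quotient/colimit topology fill in details the paper only asserts.
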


\begin{proof}
Fix a point $p$ in $M$ and a geodesic ball $U_p$ of diameter strictly less that $\delta$, centered at $p$. Let $e_p$ denote a chosen basepoint in $\pi^{-1}(p)$ which we write as
$$e_p = [x_0, x_1,\ldots, x_{k-1},p].$$ 
For $g= [x_0,x_1,
\cdots, y_{n-1},x_0]\in G_\infty$, and any $x\in U_p$, the
element $ g\cdot e_p \cdot [p,x]$ lies in $\pi^{-1}(U_p)$ since $d_M(p,x)<\delta$.
This defines a ``trivialization'' map 
$$\phi_p: G_\infty\times U_p  \to\ \pi^{-1}(U_p)\ \ ,\ \ \phi_p(g,x)= g\cdot e_p \cdot [p,x].$$ 
satisfying
$\pi\circ \phi_p(g,x) = x$. We show it is a homeomorphism by defining a
map in the other direction which composes with $\phi_p$ to the identity.

Let $e$ be any point in $\pi^{-1}(U_p)$. It is represented by $e = [x_0, y_1,\ldots, y_k]$
with $\pi(e)= y_k$ in $U_p$. There is a unique minimal
geodesic from $p$ to $y_k$, since both are in $U_p$. We can
consider their product 
$$ e\cdot [\pi(e),p]\cdot e_p^{-1} = 
[x_0,y_1, \cdots, y_{k-1},y_k] \cdot [y_k,p]\cdot [p,x_{k-1}, \cdots, x_1,x_0].$$ 
This is an element of $G_\infty$.
We can then define 
$\theta :\pi^{-1}(U_p) \to\ G_\infty $ by the formula
$\theta (e) = e\cdot [\pi(e),p] \cdot e_p^{-1} $. By the previous
paragraph, this function is continuous. There is an associated
continuous function $$\theta_p :\pi^{-1}(U_p) \to\ 
G_\infty\times U_p  \ \ \ ,\ \ \ \theta_p(e) =  (\theta (e), \pi(e)).$$ 
This is a map over $U_p$, and
it is straightforward to check that $\theta_p$ is an inverse to $\phi_p$.
It now remains to see that the locally trivial projection
$\pi: PG_\infty \to M$ is that of a principal bundle, and this lies
in the transition functions.

Let $U_p$ and $U_q$ be two convex neighborhoods of $p$ and $q$ respectively,
such that $U_p \cap U_q$ is non-empty, and let $x\in U_p
\cap U_q$. The element $[p,x,q]$ is defined. Define the
function
$$g_{p,q}: U_p \cap U_q \to\ G_\infty$$ by the formula
$$g_{p,q}(x) = e_p\cdot [p,x,q]\cdot e_q^{-1} $$
The following formula holds for the transition map
$$(\theta_p\circ\varphi_q)(g,x)
=\bigl(g\,g_{q,p}(x),x\bigr)
=\bigl(g\,g_{p,q}(x)^{-1},x\bigr)$$
and the cocycle conditions are satisfied: $g_{p,p}=id$, $g_{p,q}(x) = g_{q,p}^{-1}(x)$, and
on triple intersections $g_{p,q}(x)g_{q,r}(x) = g_{p,r}(x)$.
These are the principal bundle conditions, hence the proof is complete.
\end{proof}

\begin{rem}\rm  (Functoriality) Let $M$ and $N$ be two based Riemannian manifolds. Every basepoint-preserving Riemannian isometry
$f: (M,x_0)\longrightarrow (N,y_0)$
induces an isomorphism of topological monoids
$H_\infty(M)\cong H_\infty(N)$, 
and an isomorphism of topological groups
$G_\infty(M)\cong G_\infty(N).$
This is true because under an isometry,
$\operatorname{conv}(M)=\operatorname{conv}(N)$, and thus
$\delta_M=\delta_N$,
so the defining distance conditions are preserved. The defining equivalence relations are also preserved, so the map $f_*: G_\infty(M)\rightarrow G_\infty(N)$, defined on equivalence classes by $f_*([x_0,\ldots, x_n])=[f(x_0),\ldots, f(x_n)]$, is a well-defined bijective homomorphism.
\end{rem}


\section{Contractibility of the marked path space $PG_\infty$}\label{contractibilityproof}

We first prove the contractibility of the marked path space $PG_\infty$ and then finish off the proof of Theorem \ref{main}. The argument of the contractibility of $PH_\infty$ in the proof of Theorem \ref{happrox} does not apply here because the contraction respects repeated points \eqref{relation1} but fails to respect the additional backtracking relation \eqref{relation2}.
New constructions are needed
and these are formalization of ideas in \cite{Milnor}. Our context is slightly more general than Milnor's since we do not have to restrict to CW-complexes. Throughout, $M$ denotes a path-connected Riemannian manifold
of positive convexity radius, and $\delta=\delta_M$ is as
defined in \eqref{delta}.

\begin{lem}\label{fourthcontractible}
    Assume we have a surjective quotient map of NDR pairs $(B,A)\rightarrow (Y,X)$, and assume $B\setminus A$ is homeomorphic to $Y\setminus X$. If $A,B$ and $X$ are contractible, then $Y$ must also be contractible.
\end{lem}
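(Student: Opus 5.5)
The plan is to show that $Y$ is the pushout $X\cup_{f} B$ of the quotient map $q\colon (B,A)\to (Y,X)$ along $f=q|_A\colon A\to X$. Then I apply the gluing lemma for NDR pairs (cofibrations), together with the contractibility of $A$, $B$ and $X$.

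The first step is to identify $Y$ with the adjunction space. The hypotheses give a surjective quotient map $q\colon B\to Y$ with $q(A)\subset X$. The bijection $B\setminus A\cong Y\setminus X$ is implemented by $q$, and surjectivity forces $q(A)=X$. So the only identifications made by $q$ occur inside $A$, and they are exactly those of $f=q|_A\colon A\to X$. Hence the canonical continuous bijection $X\cup_f B\to Y$ exists. It is a homeomorphism because $q$ is a quotient map: a set in $Y$ whose preimages in $X$ and in $B$ are both closed has closed preimage under $q$. This needs $X$ to carry the quotient topology from $A$, or at least the subspace topology compatible with the pushout. I will take this as part of the meaning of ``quotient map of pairs'', namely that $q|_A\colon A\to X$ is also a quotient.

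The second step uses the NDR structure. Since $(B,A)$ is an NDR pair, $A\hookrightarrow B$ is a closed cofibration. Cofibrations are stable under cobase change, so $X\hookrightarrow Y=X\cup_f B$ is also a cofibration. I then invoke the standard gluing theorem: in the pushout square
$$\xymatrix{A\ar[r]\ar[d]_f& B\ar[d]\\ X\ar[r]& Y}$$
with $A\to B$ a cofibration, replacing $f$ by a homotopy equivalent map does not change the homotopy type of the pushout. Since $A$ and $X$ are contractible, $f$ is a homotopy equivalence. The gluing lemma then shows that $B\to Y$ is a homotopy equivalence, compared against the trivial pushout along the identity of $A$. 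So $Y\simeq B\simeq *$.

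A more hands-on alternative avoids the gluing lemma. Because $X\hookrightarrow Y$ is a cofibration and $X$ is contractible, the collapse $Y\to Y/X$ is a homotopy equivalence. Likewise $B\to B/A$ is a homotopy equivalence. Moreover $Y/X\cong B/A$ via $q$, again using the bijection on complements and the quotient property. Hence $Y\simeq Y/X\cong B/A\simeq B\simeq *$. I would present this version, since it uses only the fact that collapsing a contractible subcomplex of a cofibration is a homotopy equivalence.

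The main obstacle is the point-set step: verifying $Y/X\cong B/A$, or $Y\cong X\cup_f B$, as spaces rather than merely as sets, and checking that $(Y,X)$ is an NDR pair so that $Y\to Y/X$ is a homotopy equivalence. The hypothesis that both are NDR pairs gives the latter directly. For the former I will use that $q$ is a quotient map and that $B/A\to Y/X$ is a continuous bijection. Its inverse is continuous because $Y\to Y/X$ composed with $q$ equals $B\to B/A\to Y/X$, and all of these maps are quotient maps, working in $kTop$.
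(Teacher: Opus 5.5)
Your chosen argument, $Y\simeq Y/X\cong B/A\simeq B\simeq *$, is exactly the paper's proof. Your check that $q(A)=X$, and that $B/A\to Y/X$ is a homeomorphism because it is a continuous bijection through which the quotient map $B\to Y\to Y/X$ factors, supplies the point-set step the paper only asserts. Incidentally, the extra hypothesis you flag for the pushout route is automatic: $A=q^{-1}(X)$ is closed and saturated, so $q|_A\colon A\to X$ is already a quotient map.
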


\begin{proof}
We have a map of pairs $(B,A)\rightarrow (Y,X)$ which induces a homeomorphism on complements. Because the map is a quotient map, this implies that  
$B/A$ is homeomorphic to $Y/X$. On the other hand, because $(B,A)$ is an NDR pair, and $A$ contractible, it follows that $B/A\simeq B$ which is contractible. So $Y/X$ is contractible. For the same reasons, $Y/X\simeq Y$, so $Y$ is contractible.
\end{proof}

\begin{lem}\label{nested} 
Let
$
X_0\subset X_1\subset X_2\subset \cdots$
be a nested sequence of contractible subspaces, and each inclusion is a cofibration. Then $\hbox{colim}_{n} X_n=\bigcup_{n\ge 0} X_n$ is contractible.
\end{lem}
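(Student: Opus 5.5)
We need to prove that \(\operatorname{colim}_n X_n=\bigcup_n X_n\) is contractible when the \(X_n\) are contractible and each inclusion \(X_n\hookrightarrow X_{n+1}\) is a cofibration. Throughout, the union carries the colimit (weak) topology, which is the topology used for \(PG_\infty\). The plan is to build a contraction of \(X=\bigcup_n X_n\) inductively, one filtration level at a time, and then glue the stages together using the colimit topology.

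\emph{Step 1: a compatible family of homotopies.} Fix a basepoint \(x\in X_0\). We construct homotopies \(H^n\colon X_n\times I\to X_n\) from the identity to the constant map at \(x\), each restricting to \(H^{n-1}\) on \(X_{n-1}\times I\).
\begin{itemize}
\item Since \(X_0\) is contractible, choose a contraction \(H^0\), adjusted if necessary so that it ends at the constant map \(x\).
\item Suppose \(H^{n-1}\) has been built. Define a map on \(X_n\times\{0\}\cup X_{n-1}\times I\) by the identity on the first piece and \(H^{n-1}\) on the second.
\item Because \(X_{n-1}\hookrightarrow X_n\) is a cofibration, the homotopy extension property extends this to a homotopy \(K\colon X_n\times I\to X_n\). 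Its end map \(K_1\colon X_n\to X_n\) sends \(X_{n-1}\) to \(x\).
\end{itemize}

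\emph{Step 2: correcting the end map (the main obstacle).} The map \(K_1\) need not be constant on \(X_n\), so it must be deformed to the constant map rel \(X_{n-1}\). The difficulty is keeping \(X_{n-1}\) fixed during this deformation. Since \(K_1\) collapses \(X_{n-1}\), it factors through \(X_n/X_{n-1}\).
\begin{itemize}
\item Because \(X_n\) is contractible and the pair is NDR, the quotient \(X_n/X_{n-1}\) is contractible.
\item Contractible well-pointed spaces are contractible rel basepoint. Hence \(K_1\) is homotopic rel \(X_{n-1}\) to the constant map.
\item Alternatively, apply the standard lemma that a map \(X_n\to X_n\) that is null on the subcomplex and null-homotopic overall is null rel \(X_{n-1}\), using the cofibration again.
\end{itemize}
Define \(H^n\) by concatenating \(K\) with this rel homotopy and reparametrizing time to \([0,1]\). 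Its restriction to \(X_{n-1}\) is then a reparametrization of \(H^{n-1}\) followed by a constant homotopy.

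\emph{Step 2a: making the restriction exact.} Strictly, a reparametrized \(H^{n-1}\) is not equal to \(H^{n-1}\). There are two ways to fix this, and we take the second.
\begin{itemize}
\item Option (a): apply the homotopy extension property to the pair \((X_n\times I,\ X_n\times\partial I\cup X_{n-1}\times I)\), which is a cofibration, and redo the construction directly relative to \(H^{n-1}\).
\item Option (b): build homotopies on time intervals \([1/2^{n+1},1]\) or similar, so that no reparametrization is needed. Let the \(n\)-th stage act during the time interval \([2^{-(n+1)},2^{-n}]\), so that points of \(X_n\setminus X_{n-1}\) are first pushed into \(X_{n-1}\) up to homotopy. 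This is Milnor's telescoping trick.
\end{itemize}
In option (b), the time reparametrization is harmless because the stages are fixed in advance.

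\emph{Step 3: gluing.} The compatible maps \(H^n\) define a function \(H\colon X\times I\to X\). It is continuous because \(X\times I=\operatorname{colim}(X_n\times I)\). This holds in \(kTop\), since \(I\) is compact Hausdorff and product with a locally compact space preserves colimits. Therefore \(H\) is a contraction of \(X\).

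As a fallback, one can argue more abstractly using Whitehead's theorem. Compact subsets of the colimit of a sequence of cofibrations, which are closed \(T_1\) inclusions, lie in some \(X_n\), so every homotopy group of \(X\) vanishes. Contractibility then follows if \(X\) lies in hCW, though this extra hypothesis is the reason we prefer the direct construction.
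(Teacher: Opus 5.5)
Your proof is correct, but it takes a different route from the paper. The paper settles the lemma in one line using homotopy colimits. For a sequence of cofibrations the colimit is homotopy equivalent to the mapping telescope. By homotopy invariance of telescopes, the levelwise equivalences $X_n\to *$ then give $\operatorname{hocolim}X_n\simeq\operatorname{hocolim}(*)=[0,\infty)\simeq *$.

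You instead unpack this into an explicit contraction. You build it level by level with the homotopy extension property and glue using $X\times I=\operatorname{colim}(X_n\times I)$. The paper's version is shorter but rests on two black boxes: colim $\simeq$ hocolim for cofibration sequences, and homotopy invariance of telescopes. The standard proofs of both are essentially your HEP and telescoping argument. Yours is self-contained, yields an actual contracting homotopy, and needs no hCW hypothesis.

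Three points would tighten it.

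\begin{enumerate}
\item Option (a) is the cleanest choice, and it makes Step 2 unnecessary. Consider the map on $X_n\times\partial I\cup X_{n-1}\times I$ given by $\mathrm{id}$ on $X_n\times 0$, the constant map at $x$ on $X_n\times 1$, and $H^{n-1}$ on $X_{n-1}\times I$. This map takes values in the contractible space $X_n$. The inclusion of this subspace into $X_n\times I$ is a cofibration. Any map from the subspace of a cofibration into a contractible space extends: deform it to a constant map, which extends, and run the HEP backwards. The resulting extension is your $H^n$, and it restricts to $H^{n-1}$ exactly.

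\item Option (b), the one you chose, is only sketched. Saying points are ``pushed into $X_{n-1}$ up to homotopy'' does not suffice. What makes the telescope trick work is that $X_{n-1}\hookrightarrow X_n$ is both a cofibration and a homotopy equivalence, so $X_{n-1}$ is a strong deformation retract of $X_n$. You then run that retraction during $[2^{-(n+1)},2^{-n}]$ and contract $X_0$ during $[1/2,1]$. Each point of $X_N$ stays put until time $2^{-(N+1)}$, so the homotopy is continuous on every $X_N\times I$.

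\item In Step 2, the contractibility of $X_n/X_{n-1}$ uses that $X_{n-1}$ is contractible, not only $X_n$. With only $X_n$ contractible you would get $X_n/X_{n-1}\simeq\Sigma X_{n-1}$.
\end{enumerate}
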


\begin{proof}
The ordinary colimit agrees up to homotopy with the homotopy colimit, and the homotopy colimit is equivalent to a homotopy colimit of a constant diagram at a point, which is just a point.
\end{proof}

\begin{lem}\label{nerve}
    Let $A_1,\ldots, A_n$ be spaces such that all intersections are cofibrant and contractible, including the spaces $A_i$. We assume that $\bigcap A_i\neq\emptyset$. Then $\bigcup A_i$ is contractible.
\end{lem}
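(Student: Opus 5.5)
Induction on $n$ is the natural route, using the gluing lemma for homotopy pushouts. For $n=1$ there is nothing to prove. Assume the statement holds for any family of at most $n-1$ spaces satisfying the hypotheses (all finite intersections cofibrant and contractible, with nonempty total intersection). Put $U=A_1\cup\cdots\cup A_{n-1}$. Then $\bigcup_{i=1}^n A_i = U\cup A_n$, and
\[
U\cap A_n=(A_1\cap A_n)\cup\cdots\cup(A_{n-1}\cap A_n).
\]

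First, $U$ is contractible by the induction hypothesis: every intersection of a subfamily of $A_1,\dots,A_{n-1}$ is an intersection of the original family, hence cofibrant and contractible, and the total intersection contains $\bigcap_{i\le n}A_i\neq\emptyset$. Second, the family $B_i=A_i\cap A_n$, $1\le i\le n-1$, also satisfies the hypotheses. Its intersections $B_{i_1}\cap\cdots\cap B_{i_r}=A_{i_1}\cap\cdots\cap A_{i_r}\cap A_n$ are again intersections of the original family, and $\bigcap B_i=\bigcap A_i\ne\emptyset$. So $U\cap A_n$ is contractible, again by induction.

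Finally, $U\cup A_n$ is the pushout of $U\leftarrow U\cap A_n\rightarrow A_n$. If the inclusion $U\cap A_n\hookrightarrow A_n$ (or into $U$) is a cofibration, this strict pushout is a homotopy pushout. It is therefore homotopy equivalent to the homotopy pushout of $*\leftarrow *\rightarrow *$, which is a point, so $\bigcup A_i$ is contractible. The argument parallels the one used in Lemma \ref{nested}.

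The main obstacle is the cofibration condition in that last step. The hypothesis ``cofibrant'' must be read as: the inclusions among the $A_i$ and their intersections are cofibrations, so that the cubical diagram of intersections is Reedy cofibrant. From this I would show that a union of subcomplexes $(A_1\cap A_n)\cup\cdots\cup(A_{n-1}\cap A_n)\hookrightarrow A_n$ is again a cofibration. I would do this by the same induction, using that if $C\hookrightarrow X$ and $D\hookrightarrow X$ are cofibrations with $C\cap D\hookrightarrow C$ a cofibration, then $C\cup D\hookrightarrow X$ is a cofibration (Lillig's union theorem, valid for closed cofibrations). For this the $A_i$ should be closed, as they are in the applications, where they are closed subspaces of $PG_n$. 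Alternatively, if one prefers to avoid point-set issues, one can invoke the nerve lemma/projection lemma directly: the natural map $\operatorname{hocolim}_{\emptyset\ne S\subseteq\{1,\dots,n\}}\bigcap_{i\in S}A_i\to\bigcup A_i$ is an equivalence for a cofibrant cover. All terms are contractible, so this homotopy colimit is equivalent to the nerve of the poset of nonempty subsets, which is a simplex and hence contractible.
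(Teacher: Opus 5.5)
Your argument is correct, but your main route differs from the paper's. The paper just invokes the homotopy nerve lemma. It says $\bigcup_i A_i$ has the homotopy type of the nerve of the cover, and since $\bigcap_i A_i\neq\emptyset$, every subset of $\{1,\dots,n\}$ spans a simplex. So the nerve is the full $(n-1)$-simplex and is contractible. This is essentially your fallback via $\operatorname{hocolim}_{S}\bigcap_{i\in S}A_i$.

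Your primary argument instead proves the needed special case of the nerve lemma by hand. You induct on $n$, applying the induction both to $A_1,\dots,A_{n-1}$ and to the traces $A_i\cap A_n$. You then apply the gluing theorem to the pushout $U\leftarrow U\cap A_n\rightarrow A_n$.

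What your route buys is that it makes explicit the point-set hypotheses the paper hides in the word ``cofibrant'' and in its unreferenced version of the nerve theorem:
\begin{itemize}
\item the $A_i$ must be closed, so that $U\cup A_n$ with the subspace topology really is the pushout;
\item the inclusions of all intersections must be closed cofibrations, so that Lillig's union theorem makes $\bigcup_{i<n}(A_i\cap A_n)\hookrightarrow A_n$ a cofibration. The paper itself uses that theorem in the proof of Proposition~\ref{contractibility2}.
\end{itemize}
The paper's route is a one-liner, but its correctness rests on whichever nerve theorem is being quoted and on what that theorem requires of the cover.

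Two small remarks. In the application, the sets $R_i$ and $Q_j$ are closed subspaces of $\tilde P_k$, not of $PG_n$. Also, the hypothesis $\bigcap A_i\neq\emptyset$ is automatic once the total intersection is assumed contractible.
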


\begin{proof} By the homotopy nerve lemma, \(\bigcup_i A_i\) has the homotopy type of the nerve \(N(\mathcal A)\).
Since \(\bigcap_i A_i\neq\varnothing\), the full vertex set \(\{1,\dots,n\}\) is a simplex of \(N(\mathcal A)\).
Therefore \(N(\mathcal A)\) is a cone, hence contractible.
\end{proof}

\begin{lem}\label{quotient-ndr} (Pushing NDR data along a quotient map).
Let $A\subset B$ be a closed subspace, and let $q:B\longrightarrow X$
be a surjective quotient map. Assume that the restriction
$q|_{B\setminus A}:B\setminus A \longrightarrow X\setminus q(A)$
is a homeomorphism, and
that $A$ is saturated (i.e. $
q^{-1}(q(A))=A)$.
If $(B,A)$ is an NDR-pair, then $\bigl(X,q(A)\bigr)$ is also an NDR-pair.
\end{lem}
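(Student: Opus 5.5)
We push the NDR data along $q$ and check continuity on $X$ using the quotient property. Recall that $(B,A)$ being an NDR-pair means there are a map $u:B\to[0,1]$ with $A=u^{-1}(0)$ and a homotopy $h:B\times I\to B$ such that
\begin{itemize}
\item $h(b,0)=b$ for all $b$;
\item $h(a,t)=a$ for $a\in A$;
\item $h(b,1)\in A$ whenever $u(b)<1$.
\end{itemize}
Write $X_A=q(A)$. We define
\[
v(x)=u(\tilde x), \qquad k(x,t)=q\bigl(h(\tilde x,t)\bigr),
\]
where $\tilde x$ is any preimage of $x$ under $q$.

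\emph{Well-definedness.} Since $A$ is saturated, a point $x\in X\setminus X_A$ has exactly one preimage, because $q$ is injective on $B\setminus A$. A point $x\in X_A$ has all of its preimages in $A$. There $u\equiv 0$ and $h(a,t)=a$, so $k(x,t)=q(a)=x$ and $v(x)=0$, independently of the choice of $a$. Thus $v$ and $k$ are well defined, and we have the identities $v\circ q=u$ and $k\circ(q\times 1)=q\circ h$.

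\emph{Continuity of $v$.} This is immediate: $q$ is a quotient map and $v\circ q=u$ is continuous.

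\emph{Continuity of $k$.} This is the main obstacle, since $q\times 1_I$ need not be a quotient map in general. We use that $I$ is locally compact Hausdorff, so $q\times 1_I$ is again a quotient map (Whitehead's lemma). In $kTop$ this holds automatically anyway. Hence $k$ is continuous because $k\circ(q\times 1)=q\circ h$ is.

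\emph{The NDR conditions.} It remains to check these for $(v,k)$.
\begin{itemize}
\item $v^{-1}(0)=q(u^{-1}(0))=q(A)=X_A$, using saturation and surjectivity.
\item $k(x,0)=x$ for all $x$.
\item $k(x,t)=x$ for $x\in X_A$, as shown above.
\item If $v(x)<1$, pick $\tilde x$ with $u(\tilde x)=v(x)<1$. Then $h(\tilde x,1)\in A$, so $k(x,1)\in X_A$.
\end{itemize}

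\emph{Closedness of $X_A$.} This follows because $q^{-1}(X_A)=A$ is closed and $q$ is a quotient map.

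Note that the homeomorphism hypothesis on $B\setminus A$ is only used through injectivity there, which gives well-definedness. This completes the plan.
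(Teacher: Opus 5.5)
Your proposal is correct and follows the paper's proof: you push the NDR data $(u,h)$ forward along $q$, defining $v$ by $v\circ q=u$ and defining the homotopy as $q\circ h$ on the unique preimage (and as the identity on $q(A)$), exactly as the paper does. Your argument for continuity of the homotopy, namely that $q\times 1_I$ is a quotient map because $I$ is locally compact Hausdorff, supplies the step the paper only asserts as ``direct to check''.
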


\begin{proof}
Since $(B,A)$ is an NDR-pair, there exist (by definition) maps
\[
u:B\to [0,1],\qquad H:B\times I\to B
\]
such that $A=u^{-1}(0)$ and
\begin{align*}
H(b,0)&=b \qquad \text{for all } b\in B,\\
H(a,t)&=a \qquad \text{for all } a\in A,\ t\in I,\\
u(b)<1&\Longrightarrow H(b,1)\in A.
\end{align*}
We use this to construct NDR-data on $X$. We have the diagram
$$\xymatrix{
A\ar[d]^{q_|}\ar[r]&B\ar[d]^q\ar[dr]^u\\
q(A)\ar[r]&X\ar[r]^v&[0,1]}$$
for some induced map $v$, $v^{-1}(0)=q(A)$. Since $q$ is a quotient map, $v$ is continuous if $u$ is. We need the homotopy $K:X\times I\to X$ with the right properties to show $(X,q(A))$ is NDR. It is direct to check that 
\[
K:X\times I\to X,\qquad K(x,t)= \begin{cases}q\bigl(H(b,t)\bigr),&b\ \hbox{unique preimage of $x$ if $x\in X\setminus q(A)$}\\
x&\hbox{if $x\in q(A)$}
\end{cases}
\]
is the desired homotopy (well-defined and continuous).
\end{proof}

\begin{lem}\label{pushlaongpaths} (Extending homotopies along geodesic segments).
Let $\overline B(x,\delta)$ be a closed geodesic ball centered at $x\in M$, with $\delta$ as earlier defined \eqref{delta}, and let $\gamma: I\rightarrow M$, $\gamma (0)=x$ be a piecewise smooth path starting at $x$. Then there is a homotopy $H^\gamma : \overline B(x,\delta)\times I\rightarrow M$ so that $H^\gamma (y,0)=y$ and 
\[
d_M\bigl(H^\gamma(y,t),\gamma(t)\bigr)\le \delta
\qquad\text{for all }(y,t)\in \overline B(x,\delta)\times I.
\]
If the path $\gamma$ is in $ \overline B(x,\delta)$, then the homotopy stays in the ball at all times.
\end{lem}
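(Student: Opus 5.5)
The natural construction is parallel transport of the ball along $\gamma$ by geodesic translation: each point $y$ is moved so that its position relative to $\gamma(t)$ is carried along. Concretely, write $y=\exp_x(v)$ with $v\in T_xM$, $\|v\|\le\delta$. This is possible and unique since $\delta\le \operatorname{conv}(M)/4<\operatorname{inj}(M)$ and $\overline B(x,\delta)$ lies inside a geodesic ball. Let $P_t:T_xM\to T_{\gamma(t)}M$ be Levi-Civita parallel transport along the piecewise smooth path $\gamma$, and define
\[
H^\gamma(y,t)=\exp_{\gamma(t)}\bigl(P_t(\exp_x^{-1}y)\bigr).
\]
Then $H^\gamma(y,0)=\exp_x(\exp_x^{-1}y)=y$.

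Continuity in $(y,t)$ follows from three facts. First, $\exp_x^{-1}$ is smooth on the convex ball. Second, parallel transport along a piecewise smooth curve depends continuously on $t$; it is the solution of a linear ODE on each smooth piece, glued at the breakpoints. Third, the exponential map is smooth on $TM$.

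For the distance estimate, note that $P_t$ is an isometry, so $\|P_t(\exp_x^{-1}y)\|=\|\exp_x^{-1}y\|=d_M(x,y)\le\delta$. The curve $s\mapsto \exp_{\gamma(t)}(sP_tv)$, $s\in[0,1]$, joins $\gamma(t)$ to $H^\gamma(y,t)$ and has length $\|v\|\le\delta$. Hence $d_M(H^\gamma(y,t),\gamma(t))\le\delta$.

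The last claim, that the homotopy stays in the ball when $\gamma$ does, is the delicate step. Parallel transport alone does not obviously keep $H^\gamma(y,t)$ inside $\overline B(x,\delta)$. The fix I would use is to replace $H^\gamma$ by a modified homotopy in that case, defined via the convex structure: let $H^\gamma(y,t)$ be the point at parameter $d_M(x,y)/\delta$ along the unique minimal geodesic (in the convex ball $B(x,2\delta)$, of radius below $\operatorname{conv}(M)$) from $\gamma(t)$ toward the point $\exp_{\gamma(t)}(\delta\, P_t u)$, where $u=v/\|v\|$. A more robust version uses the radial geodesic interpolation from $y$ to the point $y_t$ defined as follows. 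Since $\gamma(t)\in\overline B(x,\delta)$, we have $d_M(x,\gamma(t))\le\delta$. Set
\[
y_t := \text{the point on the geodesic from } x \text{ through } y \text{ direction, translated by } \gamma(t),
\]
and then push it back into the closed ball by the nearest-point retraction onto $\overline B(x,\delta)$. This retraction is well-defined and continuous because closed balls of radius below the convexity radius are strongly convex. Composing with this retraction preserves both the initial condition, since $y$ already lies in the ball, and the estimate $d_M(\cdot,\gamma(t))\le\delta$, since the retraction onto a convex set containing $\gamma(t)$ is distance non-increasing to $\gamma(t)$ to first order.

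The main obstacle is exactly this last point: that metric projection onto a convex ball does not increase the distance to a point already in the ball. This is classical in nonpositive curvature and holds locally for small convex balls, but a precise statement needs care. An alternative that avoids the projection is to shrink the transported vector by a continuous factor so that the image stays in the ball. The exact factor and its continuity would need to be checked.
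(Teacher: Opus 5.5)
\textbf{The final clause of the lemma is not proved.} Your transport homotopy $H^\gamma(y,t)=\exp_{\gamma(t)}\bigl(P_t\exp_x^{-1}y\bigr)$ does give $H^\gamma(y,0)=y$ and $d_M(H^\gamma(y,t),\gamma(t))\le d_M(x,y)\le\delta$. This assumes $\exp_{\gamma(t)}$ is defined on the closed $\delta$-disc, e.g.\ $M$ complete. But the last clause, that the homotopy stays in the ball when $\gamma$ does, is not established. As you note, the transport homotopy itself violates it: in $\mathbb{R}^2$ it is $\gamma(t)+y-x$, which leaves the ball when $\gamma$ moves toward $y$.

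Your first fix changes nothing. Since $\delta<\operatorname{inj}(M)$, the minimal geodesic from $\gamma(t)$ to $\exp_{\gamma(t)}(\delta P_tu)$ is $s\mapsto\exp_{\gamma(t)}(s\delta P_tu)$. So the point at parameter $d_M(x,y)/\delta$ is again $\exp_{\gamma(t)}(P_tv)$, and $u=v/\|v\|$ is undefined at $y=x$.

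For the retraction fix, ``distance non-increasing to $\gamma(t)$ to first order'' does not give the finite inequality $\le\delta$ you need. The $1$-Lipschitz property of nearest-point projections is a nonpositive-curvature fact, and you give no substitute for it. You flag this yourself, so as written the proposal does not prove the statement.

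\textbf{How to close it.} The missing observation is a convexity argument, not a Lipschitz one. For $z$ with $d_M(x,z)\le 2\delta$, let $r_x(z)$ be the point on the unique minimal geodesic from $x$ to $z$ at distance $\min\{\delta,d_M(x,z)\}$ from $x$. This map is continuous because $2\delta<\operatorname{conv}(M)$, and it fixes $\overline B(x,\delta)$.

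Now suppose $\gamma\subset\overline B(x,\delta)$ and set $z=H^\gamma(y,t)$. Then both $x$ and $z$ lie in $\overline B(\gamma(t),\delta)$. This closed ball is geodesically convex: it is the intersection of the convex balls $B(\gamma(t),\delta')$ for $\delta<\delta'<\operatorname{conv}(M)$. Hence $r_x(z)\in\overline B(\gamma(t),\delta)$, and $r_x\circ H^\gamma$ has all the required properties.

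\textbf{Comparison with the paper.} The paper avoids this issue with a different construction. The point $y$ slides back toward $x$ along the radial geodesic $\sigma_y$, at the rate $\ell(t)$ at which $\gamma$ consumes arclength. Once it reaches $x$, it follows $\gamma$ with arclength delay $\rho(y)=d_M(x,y)$. The triangle inequality gives the bound $\rho(y)\le\delta$ in both regimes. Since $H^\gamma(y,t)$ always lies on $\sigma_y$ or on $\gamma$, the last clause is automatic, and no completeness is needed.

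Your transport homotopy is smoother and also satisfies Remark~\ref{endofpath}. Together with $H^\gamma(y,0)=y$ and the distance bound, that is all Lemma~\ref{ndrpair} actually uses. For the lemma as stated, however, you need the convexity argument above or the paper's construction.
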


\begin{proof} For each \(y\in \overline B(x,\delta)\), let \(\rho(y):=d_M(x,y)\). By assumption, there is a unique
minimizing geodesic from \(x\) to \(y\); let
\[
\sigma_y:[0,\rho(y)]\longrightarrow M
\]
denote this geodesic, parametrized by arc length, so that
\[
\sigma_y(0)=x,\qquad \sigma_y(\rho(y))=y.
\]
The assignment \(y\mapsto \sigma_y\) depends continuously on \(y\) as indicated \S\ref{prelims}.
Let $x$ travel along $\gamma$. For a given $y$, we can make $y$ travel along $\sigma_y$ so as to keep the distance between this $y(t)$ and $\gamma (t)$ within $\delta$. 
If $y$ reaches $x$ (it may not if $d(x,y)>L(\gamma)$), 
then we make $y$ follow
$\gamma$ with constant arc-length delay equal to 
$d(x,y)$.
This keeps the distance to 
$\gamma (t)$ at most $\delta$ at all times.

Writing up the key details of this construction is given below. Let $
L:=\operatorname{Length}(\gamma)$. Since \(\gamma\) is piecewise smooth, there is an arclength parameterization of the path
$\widetilde\gamma:[0,L]\longrightarrow M$ such that
$
\gamma(t)=\widetilde\gamma(\ell(t)),$
where
$\ell(t):=\operatorname{Length}(\gamma|_{[0,t]}).$
Thus \(\ell:I\to [0,L]\) is continuous and nondecreasing, with \(\ell(0)=0\).

For each \(y\in \overline B(x,\delta)\), let
$\sigma_y$ 
be the unique minimizing geodesic from \(x\) to \(y\) as above.
One now defines \(H^\gamma\) by
\[
H^\gamma(y,t)=
\begin{cases}
\sigma_y\bigl(\rho(y)-\ell(t)\bigr), & \text{if } \ell(t)\le \rho(y),\\[1ex]
\widetilde\gamma\bigl(\ell(t)-\rho(y)\bigr), & \text{if } \ell(t)\ge \rho(y).
\end{cases}
\]
This has the properties described in the first paragraph.
\end{proof}

\begin{rem}\label{endofpath}\rm
    In Lemma \ref{pushlaongpaths}, notice that if \(x\) does not move, that is, if
$
\gamma$ is constant at $x$, then the length traveled by \(\gamma\) up to time \(t\) is
$
\ell(t)=\operatorname{Length}(\gamma|_{[0,t]})=0$,
for all $t\in I$.
Therefore, 
\[
H^\gamma(y,t)=\sigma_y\bigl(\rho(y)-\ell(t)\bigr)
=\sigma_y(\rho(y))
=y
\qquad\text{for all }(y,t)\in \overline B(x,\delta)\times I.
\]
and the path $
H^\gamma(y,-):I\longrightarrow M$ is constant. In other words, if \(x\) does not move along \(\gamma\), then \(y\) does not move under the homotopy \(H^\gamma\).
\end{rem}

Using all five lemmas above, we are ready to prove our main claim.

\begin{prop}\label{contractibility2} (Theorem \ref{reducedapprox}, (2)) 
$PG_\infty$ is contractible.
\end{prop}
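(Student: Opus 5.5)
The plan is to mirror Milnor's argument for his free group model: filter $PG_\infty$ by word length and prove contractibility of each stage inductively. By Lemma \ref{nested} it then suffices to show that every $PG_n$ is contractible and that each inclusion $PG_{n-1}\hookrightarrow PG_n$ is a cofibration.

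For the cofibration property, take $B=\tilde P_n$ with closed subspace $A_n\subset \tilde P_n$, where $A_n$ consists of the tuples with some $x_i=x_{i+1}$ or some $x_{i-1}=x_{i+1}$. $A_n$ is exactly the locus identified into $PG_{n-1}$, and it is a finite union of closed "diagonal" subvarieties of the manifold-with-corners-like space $\tilde P_n$. These are cut out by smooth conditions on products of geodesic balls, so they form NDR pairs. One can build $u$ and $H$ from distance functions to each diagonal and use a union argument for finite unions. Then the quotient $q:\tilde P_n\sqcup PG_{n-1}\to PG_n$ is a homeomorphism off $A_n$, and $A_n$ is saturated. Lemma \ref{quotient-ndr} then gives that $(PG_n,PG_{n-1})$ is NDR, hence a cofibration.

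For contractibility, argue by induction on $n$. The base case $PG_0=\{[x_0]\}$ is a point. For the inductive step, apply Lemma \ref{fourthcontractible} to the map of pairs $(\tilde P_n\cup_{A_n}\!\!\cdots,\ A_n)\to (PG_n,PG_{n-1})$, or more cleanly to the attaching map $(\tilde P_n,A_n)\to(PG_n,PG_{n-1})$ followed by a pushout with $PG_{n-1}$. This requires three facts.

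First, $\tilde P_n$ is contractible. Contract the last coordinate toward $x_{n-1}$ along geodesics, then $x_{n-1}$ toward $x_{n-2}$, and so on. At each stage the coordinates further out must be dragged along, and Lemma \ref{pushlaongpaths} does exactly this: it moves points of a $\delta$-ball while keeping consecutive distances $\le\delta$, which works because $\delta<\operatorname{conv}(M)/4$ keeps everything inside convex balls.

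Second, $PG_{n-1}$ is contractible, which is the inductive hypothesis.

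Third, and this is the main obstacle, $A_n$ is contractible. Write $A_n=\bigcup_i D_i\cup\bigcup_i E_i$ with $D_i=\{x_i=x_{i+1}\}$ and $E_i=\{x_{i-1}=x_{i+1}\}$, and apply Lemma \ref{nerve}. Every intersection of these sets is again a space of chains of the form $\tilde P_k$ in which some coordinates are forced equal. By the same retraction argument (Lemma \ref{pushlaongpaths}, with Remark \ref{endofpath} guaranteeing that coordinates tied to a fixed point stay fixed), each such intersection should be contractible, and each contains the constant tuple $(x_0,\dots,x_0)$, so the total intersection is nonempty. The delicate part is checking that the constraint patterns produced by the Milnor relation, such as $x_{i-1}=x_{i+1}$ together with $x_{i}=x_{i+2}$, still contract. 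For these I would first collapse the forced backtracks by sending $x_i$ to $x_{i-1}$ along the geodesic, which preserves $E_i$ and the $\delta$-bounds, and only then run the sequential contraction. Cofibrancy of the intersections comes from the same NDR argument as above.

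With all three facts in hand, Lemma \ref{fourthcontractible}, applied through the pushout, gives that $PG_n$ is contractible, and Lemma \ref{nested} completes the proof.
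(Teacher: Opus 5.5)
Your proposal is essentially the paper's proof. It uses the same word-length filtration, and the same degenerate locus $A_n=\tilde D_n=\bigcup_i R_i\cup\bigcup_j Q_j$ whose NDR structure is pushed down to $(PG_n,PG_{n-1})$ by Lemma \ref{quotient-ndr}. It gets contractibility of $\tilde D_n$ from Lemma \ref{nerve} (all intersections contain $(v_0,\dots,v_0)$), and finishes with Lemma \ref{fourthcontractible} inductively and Lemma \ref{nested}.

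The one step you justify more weakly than the paper is the NDR property. Since $\tilde P_n$ is not a manifold, ``cut out by smooth conditions'' does not by itself give NDR data. The paper instead builds explicit strong deformation retractions of $\tilde P_n$ onto each $R_i$ and $Q_j$ with Lemma \ref{pushlaongpaths} (Lemma \ref{ndrpair}), which is exactly the dragging construction you already use for contractibility.
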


\begin{proof}
The starting point is \eqref{markedpaths} which is rewritten below
$$\tilde P_k = \{(x_0,x_{1},\ldots, x_k)\in M^{k+1}\ |\ d_M(x_i,x_{i-1})\ \leq\delta, 1\leq i\leq k ,\ x_0=v_0 \}$$
For $k\geq 2$, define the following subspaces
\begin{itemize}
    \item $R_i = \{(x_0,x_1,\ldots, x_k)\in \tilde P_k\ |\ x_{i-1}=x_i\}$,\ for $i\leq k$
    \item $Q_j = \{(x_0,x_1,\ldots, x_k)\in \tilde P_k |\ x_{j+1}=x_{j-1}\}$,\ for $j\leq k-1$
    \item $\tilde D_k\subset\tilde P_k$ is the subspace of all ``degenerate configurations'' 
    \begin{equation}\label{degenerate}\tilde D_k = R_1\cup\cdots\cup R_k\cup Q_1\cup\cdots\cup Q_{k-1}
    \end{equation}
\end{itemize}
Note that each $R_i$ and each $Q_j$ is a closed subspace of $\tilde P_k$, and according to Lemma \ref{ndrpair} next,  the pairs $(\tilde P_k, R_i)$ and $(\tilde P_k, Q_j)$ are NDR pairs, $\forall i,j$. Their intersections are closed cofibrations as well. It follows that $i_k: \tilde D_k\hookrightarrow \tilde P_k$ is a cofibration (\cite{lellig}, Corollary 2). 

By construction, we consider for fixed $k\geq 1$, the commuting 
diagram of spaces
\begin{equation}\label{pushoutdiagramG}
\xymatrix{\tilde D_k\ar@{^(->}[r]^{i_k}\ar[d]^{q_|}&\tilde  P_{k}\ar[d]^q\\
PG_{k-1}\ar@{^(->}[r]^{j_{k-1}}&PG_{k}}
\end{equation}
where $q_|$ is the restriction of $q$ to $\tilde D_k$. We observe that $\tilde P_k\setminus \tilde D_k\cong PG_k\setminus PG_{k-1}$. The map $i_k$ is a cofibration as already mentioned, and $q$ is a surjective quotient map. Since $\tilde D_k$ is saturated, by Lemma \ref{quotient-ndr}, $j_{k-1}$ is also a cofibration. We know already that $\tilde P_k$ is contractible and the same argument of Milnor (his proof of Lemma 3.6 in the simplicial case) shows that the various intersections $R_{i_1}\cap\cdots\cap R_{i_r}\cap Q_{j_1}\cap\cdots\cap Q_{j_s}$ are contractible, non-empty. Note that the intersection of all the $R_i$'s and $Q_j$'s is $\{v_0\}$. By Lemma \ref{nerve}, it follows that $\tilde D_k$ is contractible. Since both $\tilde P_k$ and $\tilde D_k$ are contractible, then using Lemma \ref{fourthcontractible}, we can infer that $PG_k$ is contractible if $PG_{k-1}$ is. This becomes an inductive argument, starting with $PG_0 =[v_0]$. All the $PG_k$'s are thus contractible, consequently  $PG_\infty$ is also contractible by Lemma \ref{nested}.
\end{proof}

\begin{lem}\label{ndrpair}
    $(\tilde P_k, R_i)$ and $(\tilde P_k, Q_j)$ are NDR pairs, for all $1\leq i\leq k$, $1\leq j\leq k-1$.
\end{lem}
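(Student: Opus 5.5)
The plan is to write down explicit NDR data $(u,H)$ for each pair, using the geodesic-convexity structure from \S\ref{prelims} and the homotopy of Lemma \ref{pushlaongpaths}. We treat $R_i$ first and then $Q_j$; both are handled the same way.

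\textbf{The pair $(\tilde P_k,R_i)$.}
\begin{enumerate}
\item For the function $u$, take $u(x)=\min\{1,\,2d_M(x_{i-1},x_i)/\delta\}$. Then $u^{-1}(0)=R_i$. Moreover $u(x)<1$ means $d_M(x_{i-1},x_i)<\delta/2$.
\item Since $d_M(x_{i-1},x_i)\le\delta<\operatorname{conv}(M)$, there is a unique unit-speed minimal geodesic $\sigma$ from $x_i$ to $x_{i-1}$. It depends continuously on the tuple.
\item For the homotopy $H$, slide $x_i$ along $\sigma$ toward $x_{i-1}$ by a truncated amount: let $x_i(t)=\sigma\bigl(\lambda(t,x)\,d_M(x_{i-1},x_i)\bigr)$, where $\lambda(t,x)=\min\{1,\,2t\}$ whenever $u(x)<1$.
\item To make $H$ continuous and stationary at time $1$ when $u(x)=1$, use a standard cutoff: replace $\lambda$ by $t\cdot\min\{1,\,2-2u(x)\}$ (suitably rescaled), so nothing moves once $u\geq 1$. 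Points with $u<1/2$ are then moved fully onto $R_i$.
\item Points with $1/2\le u<1$ need a second phase of this cutoff. Alternatively, one can use the equivalent characterization of a closed cofibration via a neighborhood deformation; by Lemma \ref{quotient-ndr} and the Str\o m criterion, it suffices to give $u$ together with $H$ deforming $u^{-1}[0,1)$ into $A$ rel $A$.
\end{enumerate}
When $x_i$ moves, the constraint $d(x_{i-1},x_i)\le\delta$ only improves. The constraint $d(x_i,x_{i+1})\le\delta$ may be violated, so the later coordinates $x_{i+1},\dots,x_k$ must be dragged along. Here we invoke Lemma \ref{pushlaongpaths} with $\gamma$ the path traced by $x_i$, applied successively: $x_{i+1}$ is pushed by $H^{\gamma}$, giving a path $\gamma'$; then $x_{i+2}$ is pushed by $H^{\gamma'}$, and so on. The lemma guarantees $d(x_j(t),x_{j+1}(t))\le\delta$ for every consecutive pair. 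By Remark \ref{endofpath}, if $x_i$ does not move then nothing moves, so $R_i$ is fixed pointwise. Continuity follows from the continuous dependence of minimal geodesics on their endpoints.

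\textbf{The pair $(\tilde P_k,Q_j)$.} Use $u(x)=\min\{1,\,2d_M(x_{j-1},x_{j+1})/\delta\}$, move $x_{j+1}$ along the minimal geodesic toward $x_{j-1}$, and drag $x_{j+2},\dots,x_k$ along as before. The only constraint to check is $d(x_j,x_{j+1}(t))\le\delta$. This holds because $x_j$, $x_{j-1}$ and $x_{j+1}$ all lie in the closed ball $\overline B(x_j,\delta)$, which is geodesically convex since $\delta\le\operatorname{conv}(M)/4$. So the geodesic from $x_{j+1}$ to $x_{j-1}$ stays in that ball.

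\textbf{Main obstacle.} The delicate point is continuity of the successive dragging in Lemma \ref{pushlaongpaths}. Its homotopy depends on $\gamma$ only through the arclength reparametrization, and continuity of $\gamma\mapsto H^\gamma$ in the compact-open topology needs arclength $\ell(t)$ to vary continuously. This holds here because each $\gamma$ is a geodesic-type path of length $t\cdot d$, and the pushed paths have lengths controlled explicitly. If this fails, we would instead drag $x_{m+1}$ by a simple geodesic interpolation: let it move toward the new $x_m(t)$ just enough to stay within $\delta$, using convexity of $\overline B(x_m(t),\delta)$.
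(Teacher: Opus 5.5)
Your geometric mechanism is the paper's: slide $x_i$ (resp.\ $x_{j+1}$) along the minimal geodesic to $x_{i-1}$ (resp.\ $x_{j-1}$), drag the later coordinates one after another with $H^{\eta_m}$ from Lemma \ref{pushlaongpaths}, and use Remark \ref{endofpath} to see that $R_i$ (resp.\ $Q_j$) stays fixed. Convexity of $\overline B(x_j,\delta)$ handles the extra constraint for $Q_j$. The gap is in how you package this as NDR data. Take your cutoff $\lambda=t\min\{1,2-2u(x)\}$ and a point with $1/2<u(x)<1$. At time $1$ its coordinate $x_i$ has gone only a fraction $2-2u(x)<1$ of the way to $x_{i-1}$, so $H(x,1)\notin R_i$ even though $u(x)<1$. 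Hence your pair $(u,H)$ violates the NDR axiom $u(x)<1\Rightarrow H(x,1)\in A$. You notice this (``need a second phase'') but do not close it. The fallback you offer does not help either. Lemma \ref{quotient-ndr} is about pushing NDR data along a quotient map and plays no role here. And ``a $u$ together with $H$ deforming $u^{-1}[0,1)$ into $A$ rel $A$'' is just the NDR definition restated.

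The missing observation is that no cutoff is needed. Every point of $\tilde P_k$ satisfies $d_M(x_{i-1},x_i)\le\delta<\operatorname{conv}(M)$ (resp.\ $x_{j\pm1}\in\overline B(x_j,\delta)$). So the full slide ($\lambda=t$) and the dragging are defined on all of $\tilde P_k$, and $H(x,1)\in R_i$ (resp.\ $Q_j$) for every $x$. This makes $R_i$ and $Q_j$ closed strong deformation retracts of $\tilde P_k$. Then any $u$ with $u^{-1}(0)=R_i$, together with this $H$, is NDR data; your $u=\min\{1,2d_M(x_{i-1},x_i)/\delta\}$ works, as does $\min\{1,d(x,R_i)\}$. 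This is exactly the paper's argument. If you prefer to keep your truncated homotopy, replacing $u$ by $\min\{1,2u\}$ also repairs it. On your ``main obstacle'': each dragged path produced by Lemma \ref{pushlaongpaths} has the same arclength function as the path it follows. For $R_i$ that function is $\ell(t)=t\,d_M(x_{i-1},x_i)$. This makes continuity of the iteration in $(p,t)$ transparent, so no fallback construction is needed.
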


\begin{proof}
We will prove that both $R_i$ and $Q_j$ are strong deformation retracts of $\tilde P_k$, which at least for metric spaces, implies our claim. Indeed, if $A$ is a closed subspace in a metric space $X$ that is a deformation retraction via a homotopy $H$, one can obtain the NDR data for $(X,A)$ by setting $u: X\rightarrow [0,1]$, 
$$
u(x)= \min (1, d(x,A))$$
and with the same $H$
(see proof of Lemma \ref{quotient-ndr}).

Fix \(i\), with \(1\le i\le k\). We define a strong deformation retraction
\[
H:\widetilde P_k\times I\longrightarrow \widetilde P_k
\]
onto \(R_i\). In words, this homotopy leaves the marked points $x_j$ for $j\leq i-1$ stationary, while it moves the $x_j, j\geq i+1$ along geodesic segments between the points so that consecutive distances remain less than $\delta$.
More precisely, let 
$p=(x_0,x_1,\dots,x_k)\in \widetilde P_k$, 
and let \(\eta_i^p:I\to M\) be the minimizing geodesic from \(x_i\) to \(x_{i-1}\) lying in \(\overline B(x_i,\delta)\). We define recursively, for \(j=i,\dots,k-1\),
\begin{equation}\label{iteratively}
\eta_{j+1}^p(t):=H^{\eta_j^p}(x_{j+1},t),
\end{equation}
where \(H^{\eta_j^p}\) is the homotopy supplied by Lemma~\ref{pushlaongpaths}, applied with base point \(\eta_j^p(0)\) and path \(\eta_j^p\).

We then set
\[
H(p,t):=(x_0,\dots,x_{i-1},\eta_i^p(t),\eta_{i+1}^p(t),\dots,\eta_k^p(t)).
\]
By Lemma~\ref{pushlaongpaths}, for every \(j\ge i\) and every \(t\in I\),
$
d_M(\eta_{j+1}^p(t),\eta_j^p(t))\le \delta.$
Since the coordinates \(x_0,\dots,x_{i-1}\) are fixed, it follows that
$H(p,t)\in \widetilde P_k$ for all $(p,t)\in \widetilde P_k\times I.$

Also,
$
H(p,0)=p,
$
because \(\eta_i^p(0)=x_i\) and recursively \(\eta_{j+1}^p(0)=x_{j+1}\).
Moreover, at time \(t=1\), $
\eta_i^p(1)=x_{i-1},
$
since \(\eta_i^p\) is the geodesic from \(x_i\) to \(x_{i-1}\), so that $H(p,1)\in R_i$.

Finally, $R_i$ is fixed by the homotopy since if \(p\in R_i\), then \(x_i=x_{i-1}\), so \(\eta_i^p\) is the constant path. By Remark~\ref{endofpath}, this implies that \(\eta_{i+1}^p,\dots,\eta_k^p\) are all constant as well.
The continuity of \(H\) follows from the continuous dependence of the geodesic segment \(\eta_i^p\) on \(p\).

The proof that $Q_j$ is a strong deformation retract of $\tilde P_k$ is identical, only that the initial choice of path is different. With the same $p$ as above, we let $\eta^p_{j+1}$ be the unique minimal geodesic starting at $x_{j+1}$ and ending at $x_{j-1}$. This geodesic exists in $\overline B (x_j,\delta)$ because both $x_{j+1}$ and $x_{j-1}$ are in $\overline B(x_j,\delta)$, and the ball is geodesically convex. One now defines the paths $\eta_r$ for $r\geq j+1$ iteratively as in \eqref{iteratively}, and the homotopy
\[
H(p,t):=(x_0,\dots,x_{j-1},x_j,\eta_{j+1}^p(t),\eta_{j+2}^p(t),\dots,\eta_k^p(t)).
\]
This homotopy takes 
the marked point $x_{j+1}$ to $x_{j-1}$ along the path $\eta_{j+1}^p(t)$ and pulls the remaining marked points along so that consecutive distances remain less than $\delta$, precisely as was done in the previous deformation retraction. Note that at all times, $d_M(x_j,\eta_{j+1}^p(t))\leq\delta$ because the path $\eta_{j+1}^p(t)$ is in the geodesic ball. This process depends continuously on $p$ and $\eta_{j+1}^p$.
\end{proof}

\begin{cor}
    The quotient map $H_\infty\rightarrow G_\infty$ is a homotopy equivalence.
\end{cor}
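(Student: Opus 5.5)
We want to show that the quotient map $\rho: H_\infty\to G_\infty$, which imposes the extra Milnor relation \eqref{relation2}, is a homotopy equivalence. The plan is to realize $\rho$ as the map induced on fibres by a map of fibrations over $M$ whose total spaces are both contractible, and then upgrade the resulting weak equivalence using hCW arguments as before.

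\emph{Step 1: a map of fibrations.} The identity on each $\tilde P_k$ descends to quotient maps $PH_n\to PG_n$, since $\sim$ is contained in $\approx$. These are compatible with the inclusions, so they pass to the colimit and give a map $\tilde\rho: PH_\infty\to PG_\infty$. The map $\tilde\rho$ commutes with the endpoint projections to $M$, because both projections are induced by $\pi_k$ in \eqref{pik}. Over the basepoint $x_0$, the restriction of $\tilde\rho$ to fibres is exactly $\rho$. By Lemma \ref{lem:free loop fibration}, $PH_\infty\to M$ is a fibration. By Lemma \ref{principalGbundle}, $PG_\infty\to M$ is a principal bundle over the paracompact base $M$, hence also a (Hurewicz) fibration.

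\emph{Step 2: weak equivalence.} The space $PH_\infty$ is contractible by the proof of Theorem \ref{happrox}, and $PG_\infty$ is contractible by Proposition \ref{contractibility2}. Compare the two long exact homotopy sequences, which are related by $\tilde\rho$ and the identity of $M$. The map on total spaces and the map on bases are both isomorphisms on all homotopy groups, so the five lemma shows that $\rho$ induces isomorphisms on $\pi_i$ for $i\ge1$. For $\pi_0$, we use that both fibres map bijectively onto $\pi_1(M)$ via the boundary map; simple connectivity makes this trivial. This gives a weak equivalence. Alternatively, we can note that $\phi_\infty$ factors through $\rho$ up to the boundary maps, and apply two-out-of-three with Theorem \ref{happrox} and Theorem \ref{reducedapprox}.

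\emph{Step 3: upgrade.} We already know $H_\infty\in$ hCW from the proof of Theorem \ref{happrox}. For $G_\infty$ we argue in the same way: it is the fibre of the Hurewicz fibration $PG_\infty\to M$, whose total space is contractible and whose base is a manifold. Proposition 3 of \cite{schon} then gives $G_\infty\in$ hCW, and Whitehead's theorem shows that $\rho$ is a homotopy equivalence. The map $\rho$ is also a monoid homomorphism, since juxtaposition is compatible with both relations.

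\emph{Main obstacle.} The main obstacle is checking that $\tilde\rho$ is continuous on the colimits and that the relevant projections really are Hurewicz fibrations; this is needed before the long exact sequences can be compared legitimately. Continuity follows because each $\tilde P_k\to PG_k$ is a quotient map and colimit topologies are used throughout. The fibration property for $PG_\infty\to M$ follows from local triviality (Lemma \ref{principalGbundle}) together with Theorem \ref{Dold}.
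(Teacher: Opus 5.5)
Your proposal is correct and takes the paper's approach: the quotient $PH_\infty\to PG_\infty$ is a map over $M$ between contractible total spaces, the long exact sequences give a weak equivalence on fibres, and Whitehead's theorem upgrades it once both fibres are known to lie in hCW. You also spell out two points the paper leaves implicit: $PG_\infty\to M$ is a Hurewicz fibration by local triviality over a paracompact base, and $G_\infty\in$ hCW by Sch\"on's result.
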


\begin{proof}
The quotient map \(PH_\infty\to PG_\infty\) is a map over \(M\)
between contractible total spaces. The induced map on the fibers is
the quotient map \(H_\infty\to G_\infty\). The long exact sequences
of the two fibrations therefore show that this map is a weak homotopy
equivalence. Since both \(H_\infty\) and \(G_\infty\) lie in
\(\mathrm{hCW}\), Whitehead's theorem implies that it is a homotopy
equivalence.
\end{proof}


\section{Modeling the Free Loop Space}\label{freeloop}

We keep our notation $\Omega M$ for the Moore loop space and we write $LM$ for the analog, unbased space of Moore loops. If we start with a group model $G$ of $\Omega M$, as discussed in \S\ref{models}, then we can reconstruct $LM$ as follows.
Let $E$ be a contractible space with a left free $G$ action, and let $G$ act by itself by conjugation
$$G\times G\rightarrow G\ \ ,\ \ 
(h,g)\mapsto g^h := hgh^{-1}$$ 
This is a left action.
The notation for $G$ as a $G$-space via this conjugation action is written $G^{ad}$.
We can then construct the associated space
\begin{equation}\label{egad}E\times_GG^{ad} := E\times G/_\sim\ \ ,\ \ (e,g)\sim (he, hgh^{-1})\ ,\ h\in G
\end{equation}
This is the quotient by the diagonal action of $G$. 

\begin{lem}\label{recognition2} Let $X$ be a well-pointed simply connected space, $G\rightarrow E\rightarrow X$ a principal $G$-bundle, $E$ contractible, and both $X,G$ in hCW, then there is a homotopy equivalence $ LX\simeq E\times_GG^{ad}$.
\end{lem}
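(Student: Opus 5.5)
We plan to compare two fibrations over \(X\): the free loop fibration \(\Omega X\to LX\to X\) (evaluation at the start point), and the bundle \(G^{ad}\to E\times_G G^{ad}\to E/G=X\) associated to the principal bundle. Since \(E\) is a principal \(G\)-bundle, the projection \(E\times_G G^{ad}\to X\), \([e,g]\mapsto \pi(e)\), is a fiber bundle with fiber \(G\). It is locally trivial over the trivializing opens of \(E\), so by Theorem~\ref{Dold} it is a Hurewicz fibration over the paracompact \(X\). The intermediate object we would aim for is the standard one: \(LX\simeq \operatorname{Map}(S^1,BG)\) and \(EG\times_G G^{ad}\simeq L BG\). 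The cleanest route is to build a zigzag of maps over \(X\) that restrict on fibers to the weak equivalence \(G\simeq\Omega X\) from Corollary~\ref{recognition1}.

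The steps are as follows.

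First, we introduce the auxiliary space
\[
\mathcal{L}=\{(e,g,\omega)\ :\ e\in E,\ g\in G,\ \omega\ \text{a Moore path in } E \text{ from } e \text{ to } g e\}\big/G ,
\]
with \(G\) acting diagonally by \(h\cdot(e,g,\omega)=(he,hgh^{-1},h\omega)\). There are two projections out of \(\mathcal L\):
\begin{itemize}
\item to \(E\times_G G^{ad}\), by forgetting \(\omega\);
\item to \(LX\), by \([e,g,\omega]\mapsto \pi\circ\omega\). This is a loop because \(\pi(ge)=\pi(e)\).
\end{itemize}

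Second, we check that the first projection is a fibration with fiber the space of paths in \(E\) between two fixed points. This fiber is contractible because \(E\) is. So the first projection is a weak equivalence.

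Third, we show the second projection is a weak equivalence by comparing fibers over a point \(x\in X\). Fix \(e\in\pi^{-1}(x)\). The fiber over a loop \(\lambda\) consists of lifts \(\omega\) of \(\lambda\) starting at \(e\), with \(g\) forced by the endpoint. By unique path lifting up to the \(G\)-action, this fiber should be contractible; alternatively, both \(\mathcal L\to X\) and \(LX\to X\) are fibrations, and the fiber map is \(\{(g,\omega)\}\to\Omega X\), which is exactly the boundary equivalence of Corollary~\ref{recognition1}. The five lemma on long exact sequences then gives a weak equivalence \(\mathcal L\to LX\) over \(X\). Simple connectivity of \(X\) is used here, so that \(\pi_0\) and \(\pi_1\) cause no trouble in the five lemma, and the fibrations are over a connected base with connected fibers.

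Finally, we upgrade the weak equivalence \(LX\simeq_w E\times_G G^{ad}\) to a homotopy equivalence. Both sides should lie in hCW. For \(LX\), we use the fibration \(\Omega X\to LX\to X\) with \(X\) and \(\Omega X\) in hCW; by Stasheff/Schön-type results the total space is then in hCW, with \(\Omega X\) in hCW by \cite{schon} since \(X\) is well-pointed. For \(E\times_G G^{ad}\), it is the total space of a fibration with base \(X\) and fiber \(G\), both in hCW. Whitehead's theorem then finishes the argument.

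The main obstacle we expect is the hCW property of total spaces, since Proposition~3 of \cite{schon} as used so far gives fibers from total spaces, not total spaces from fibers. We would first try Stasheff's theorem that a fibration with base and fiber in hCW has total space in hCW. A secondary obstacle is the continuity of the fibration claims for \(\mathcal L\), which we would handle with the same local lifting-function argument as Proposition~\ref{distinguished}.
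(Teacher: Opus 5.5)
Your proof is correct, but it takes a different route from the paper.

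\textbf{What the paper does.} It reduces the lemma to a quoted fact. It cites $LBG\simeq EG\times_G G^{ad}$ from \cite{kss}. It then uses universality of $EG$ to get a $G$-map $E\to EG$ over a classifying map $c\colon X\to BG$, which is a weak equivalence because $E$ is contractible. The conclusion is read off the zigzag $LX\to LBG\to EG\times_G G^{ad}\leftarrow E\times_G G^{ad}$.

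\textbf{What you do differently.} You never pass to $BG$. Your space $\mathcal L$ of triples $(e,g,\omega)$ amounts to proving the quoted fact directly for the given $E$. It yields an explicit zigzag $E\times_G G^{ad}\leftarrow\mathcal L\to LX$ of maps over $X$; in the application this is over $M$ with $E=PG_\infty$.

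\textbf{What each buys.} The paper's version is shorter, but it black-boxes \cite{kss} and the existence of classifying maps. Yours is self-contained, at the price of checking via local trivializations that the projections out of $\mathcal L$ are fibrations. Both arguments tacitly need $X$ paracompact, or $E\to X$ numerable: you need it for Dold--Hurewicz, the paper for the existence of $c$. This holds for $X=M$.

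\textbf{Two small repairs.}
\begin{enumerate}
\item Path lifting is not unique when $G$ is not discrete. The space of lifts of a loop $\lambda$ from $e_0$ is still contractible, because it is the space of sections, with prescribed initial value, of the trivial bundle $\lambda^*E$ over an interval.
\item The fiber map $\mathcal L_x\to\Omega_xX$ is not literally the boundary map, which goes $\Omega X\to G$. Instead, it is left inverse to the path-lifting map $s\colon\Omega_xX\to\mathcal L_x$. The map $\mathcal L_x\to G$, $(g,\omega)\mapsto g$, is a fibration whose fibers are path spaces in the contractible $E$, so it is a weak equivalence. Its composite with $s$ is the boundary map, so two-out-of-three gives that $s$, and hence your fiber map, is a weak equivalence.
\end{enumerate}

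\textbf{The hCW upgrade.} Stasheff's theorem from \cite{stasheff2} (base and fiber in hCW imply total space in hCW) is the right tool. It is also exactly what the paper's terse claim that ``all other spaces are'' in hCW implicitly requires. Since $\mathcal L$ sits in the middle of your zigzag, either place it in hCW by the same theorem, or route through a CW approximation of it before invoking Whitehead.
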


\begin{proof} This is standard, with the starting point being that
$LBG$ is weakly equivalent to $EG\times_GG^{ad}$ (see \cite{kss}, Appendix). By the universality of $EG$, there is a $G$-map $E\rightarrow EG$ covering the classifying map
$c: X\rightarrow BG$. This map induces a homotopy equivalence $E\times_GG^{ad}\rightarrow EG\times_GG^{ad}$.
Putting it all together, we obtain the sequence of weak homotopy equivalences $$
\xymatrix{LX\ar[r]^-{Lc}&LBG\ar[r]^-{\simeq}&
EG\times_GG^{ad}&E\times_GG^{ad}\ar[l]_-\simeq}$$ 
Since $X$ and $G$ are in hCW, all other spaces are  and so all maps are homotopy equivalences.
\end{proof}

Next, consider the space of ``marked unbased loops'' $L_\infty$ obtained as follows. For $k=0$, $L_0=M$, and for each  $k\geq 1$, define
$$L_k = \{(x,x_1, \cdots , x_k, x)\in M^{k+2} \ |\ \ d_M(x_i,x_{i+1})\leq \delta , d_M(x,x_1)\leq\delta, d_M(x_k,x)\leq\delta \}$$
then set
$L_\infty= \bigsqcup_{k\geq 0}L_k/_\approx$ to be the quotient space under the identification $\approx$ which is a combination of both \eqref{relation1} and \eqref{relation2}. As before, elements in $L_\infty$ are equivalence classes of tuples written $[x,x_1,\ldots, x_k,x]$ for some $k$, and there is a projection
 $$\pi : L_\infty\rightarrow M
\ \ ,\ \ 
[x,x_1,\ldots, x_k,x]\longmapsto x
 $$
which is continuous.

\begin{lem} The projection map $\pi : L_\infty\rightarrow M$ is a fiber bundle, with fiber $G_\infty$. 
\end{lem}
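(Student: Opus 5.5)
We will imitate the proof of Lemma \ref{principalGbundle}, building explicit local trivializations over small geodesic balls. The first thing to record is that the fiber $\pi^{-1}(x_0)$ is exactly $G_\infty$: the tuples in it are $(x_0,x_1,\ldots,x_k,x_0)$ with consecutive distances at most $\delta$, and the relation $\approx$ is the same as in \eqref{Gn}. Continuity of $\pi$ is clear, since it is induced by a coordinate projection compatible with $\approx$.

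Next comes the trivialization. Fix $p\in M$ and a geodesic ball $U_p$ of diameter less than $\delta$ centered at $p$. We first need a continuous choice, for $x\in U_p$, of a marked path $e_x\in PG_\infty$ from $x_0$ to $x$. Take a fixed $e_p=[x_0,y_1,\ldots,y_{m-1},p]$ and set $e_x=e_p\cdot[p,x]$; this is legitimate because $d_M(p,x)<\delta$. Write $\bar e_x$ for the reversed tuple, going from $x$ to $x_0$. Define
\[
\Phi_p: U_p\times G_\infty\to \pi^{-1}(U_p),\qquad \Phi_p(x,g)=\bar e_x\cdot g\cdot e_x ,
\]
which is the conjugation $g^{\bar e_x}$ in the paper's notation. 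The result is the class of the tuple $(x,\ldots,x_0,g\text{-entries},x_0,\ldots,x)$. All consecutive distances are at most $\delta$, so this lies in $L_\infty$ and projects to $x$. Define the inverse by
\[
\Theta_p([x,x_1,\ldots,x_k,x])=\bigl(x,\; e_x\cdot[x,x_1,\ldots,x_k,x]\cdot\bar e_x\bigr).
\]
This produces a marked loop at $x_0$, hence an element of $G_\infty$. Both composites reduce to the identity: $e_x\cdot\bar e_x$ is a backtracking word, and it collapses to $[x_0]$ or $[x]$ by repeated use of the Milnor relation \eqref{relation2} together with \eqref{relation1}, exactly as $e\cdot e^{-1}=1$ in the group $G_\infty$.

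The substantive point is that both formulas respect $\approx$. Each formula is a concatenation with fixed words on the two sides of the loop. A collapse $x_i=x_{i+1}$ or $x_{i-1}=x_{i+1}$ occurring strictly inside the loop word stays a collapse after concatenation. A collapse involving the endpoint $x$, such as $x_1=x$ or $x_{k-1}=x$, becomes a collapse in the concatenated word, because the adjacent entry of $\bar e_x$ or $e_x$ is also $x$ and so produces a repetition or a backtrack. Continuity then follows from the same arguments as in Lemma \ref{lem:continuity} and Appendix B. In detail, on each finite stage, $\bigsqcup_{k\le n}L_k\cap\pi^{-1}(U_p)$ and $U_p\times G_n$, the maps are induced by continuous maps of tuples, since $x\mapsto e_x$ is continuous. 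We then pass to quotient and colimit topologies. This is where care is needed: the product $U_p\times G_\infty$ should be identified with $\operatorname{colim}_n U_p\times G_n$. That identification holds in $kTop$ because $U_p$ is locally compact.

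Finally, on overlaps $U_p\cap U_q$ the transition map is $(x,g)\mapsto (x,\, c(x)\,g\,c(x)^{-1})$, where $c(x)=e^{(p)}_x\cdot\bar e^{(q)}_x\in G_\infty$ depends continuously on $x$. The transitions therefore lie in $G_\infty$ acting by conjugation, which gives a fiber bundle with fiber $G_\infty$. It is in fact the bundle associated to $PG_\infty\to M$ with fiber $G^{ad}_\infty$, which sets up Lemma \ref{recognition2}. The main obstacle I expect is the well-definedness of $\Theta_p$ on classes where the endpoint entries collapse, together with the colimit topology bookkeeping. The algebra itself is routine.
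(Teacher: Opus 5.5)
Your proposal is correct and follows essentially the paper's proof. Both trivialize $\pi^{-1}(U_p)$ over a small convex ball by conjugating with a fixed marked path from $x_0$ to $x$: your $e_x=e_p\cdot[p,x]$ is the paper's $\alpha_p*\sigma_x$.

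You also supply details the paper leaves implicit. You check compatibility with $\approx$ at the endpoints, and you obtain continuity of the inverse from the local compactness of $U_p$. You identify the transition functions as conjugation by $e_p\cdot[p,x,q]\cdot e_q^{-1}$, which is the paper's $g_{p,q}(x)$ from Lemma \ref{principalGbundle}. This anticipates the identification with $PG_\infty\times_{G_\infty}G_\infty^{ad}$ used in Proposition \ref{modelfreeloops}.
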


\begin{proof}
The argument is standard (as in Lemma \ref{principalGbundle}) and the key point is the existence of a trivializing cover by geodesic balls. More precisely,  
Choose an open cover $\{U_p\}_{p\in \Omega}$ of $M$ by geodesically
convex balls, where $p$ denotes the center of $U_p$
and $\Omega\subset M$ is a (countable) set.
Choose once and for all a marked
piecewise-geodesic path $\alpha_p\in PG_\infty$ from $x_0$ to $p$.
Observe that $$G_\infty = \{[x_0, x_1,\ldots, x_k,x_0]\ , \hbox{for some $k$},\ d(x_i, x_{i+1})\leq\delta\}\ \ \ \ 
(\hbox{see \S\ref{groupmodel}})$$ is the fiber over a basepoint $x_0\in M$.  For any $x\in U_p$, consider the unique minimal geodesic $\sigma_x$ from $p$ to $x$, and build the map
 $$\pi^{-1}(U_p)\rightarrow G_\infty\times U_p\ \ ,\ \ 
 \gamma = [x, x_1,\ldots, x_k,x] \longmapsto (\gamma^{\alpha_p*\sigma_x}, x)$$
 where $\gamma^{\alpha_p*\sigma_{x}} : = (\alpha_p*\sigma_x)*\gamma*(\alpha_p*\sigma_x)^{-1}$ is the conjugated loop starting and ending at $x_0$ (recall our convention for the product of loops: $\ell_1*\ell_2$ means we apply $\ell_1$ first then $\ell_2$). This map 
is a homeomorphism and gives the desired local trivialization.
\end{proof}

Next, we will set $E=PG_\infty$, the space of all marked paths and $G=G_\infty$, and construct a map 
$\Theta : PG_\infty \times_{G_{\infty}}G_{\infty}^{ad}\lrar L_\infty$ as follows. Take an element $[\gamma, g]$ in the adjoint bundle, where $\gamma = [x_0,x_1,\ldots ,x_n]$ and $g = [x_0,y_1,\ldots, y_k, x_0]$, for some $n,k>0$. Conjugation gives an element in $L_\infty$;
$$g^{\gamma^{-1}} := \gamma^{-1} * g * \gamma = [x_n,\ldots, x_1, x_0, y_1,\ldots, y_k, x_0,x_1\ldots, x_n] \in L_\infty$$
Pictorially, this is the class of a piecewise geodesic path that starts at the endpoint of $\gamma$, goes to $x_0$, then loops around along $g$, and comes back to $x_n$ via $\gamma$. This path does not depend on the representative class since if $[\gamma, g] = [h\gamma , hgh^{-1}]$, $h\in G_\infty$, then 
$$g^{\gamma^{-1}} = (hgh^{-1})^{(h\gamma)^{-1}}$$
Part of the main proposition below shows it is a homeomorphism.
This Proposition is the main result of this section.

\begin{prop}\label{modelfreeloops}  $L_\infty$ has the homotopy type of $LM.$
\end{prop}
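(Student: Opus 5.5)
The plan is to combine Lemma \ref{recognition2} with the map $\Theta$ just constructed. Take $X=M$, $G=G_\infty$ and $E=PG_\infty$. By Theorem \ref{reducedapprox}, $\pi:PG_\infty\to M$ is a principal $G_\infty$-bundle with contractible total space. $M$ is a simply connected manifold, hence well-pointed and in hCW. $G_\infty$ is in hCW: this follows from \cite{schon} applied to the fibration $G_\infty\to PG_\infty\to M$, or from the equivalence $H_\infty\simeq G_\infty$. Lemma \ref{recognition2} then gives $LM\simeq PG_\infty\times_{G_\infty}G_\infty^{ad}$. It therefore remains to show that $\Theta:PG_\infty\times_{G_\infty}G^{ad}_\infty\to L_\infty$ is a homotopy equivalence; I will in fact aim to show it is a homeomorphism.

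\emph{Well-definedness and continuity.} Independence of the representative was checked above. Concatenation and reversal of tuples respect both relations \eqref{relation1} and \eqref{relation2}. Continuity on each finite filtration piece $\tilde P_n\times \tilde G_k$ is clear, since $\Theta$ is just juxtaposition of coordinates. It then passes to the colimit and quotient topologies by the same argument as in Appendix B used for $\mu$.

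\emph{Inverse map.} I will construct the inverse locally over $M$, using the trivializations already at hand. Both sides fiber over $M$. On the left, $[\gamma,g]\mapsto\pi(\gamma)$ gives the associated bundle with fiber $G^{ad}_\infty$, locally trivialized by the charts $\phi_p$ of Lemma \ref{principalGbundle}. On the right, $\pi:L_\infty\to M$ is the bundle of the previous lemma. $\Theta$ commutes with the projections. Over $U_p$, with $e_p=\alpha_p$, the inverse is
\[
\ell=[x,x_1,\ldots,x_k,x]\longmapsto \bigl[\alpha_p\cdot[p,x],\ \ell^{\alpha_p\cdot[p,x]}\bigr].
\]
This expression is continuous, and the key computation is that it is inverse to $\Theta$. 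One direction is the cancellation $\gamma\gamma^{-1}=1$ in $PG_\infty$, which is exactly the Milnor relation \eqref{relation2} applied repeatedly. For the other direction, the class $[\gamma,g]$ is rewritten using $h=\alpha_p[p,x]\gamma^{-1}\in G_\infty$. Independence of the chart $p$ on overlaps follows from the same cancellation, so the local inverses glue.

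\emph{Main obstacle.} The hardest step is verifying that these reductions are genuinely identities in $L_\infty$ and $PG_\infty$, and that the topology is respected. The point is that $\gamma^{-1}\gamma$ reduces to the constant class only through iterated backtracking relations. In addition, the equivalence classes in $PG_\infty\times_G G^{ad}$ carry both the quotient topology and the colimit topology. If checking the homeomorphism on the colimit topology becomes delicate, I will fall back on a weaker statement. $\Theta$ is a map of fiber bundles over $M$ that restricts on the fiber over $x_0$ to the identity of $G_\infty$: indeed $[\gamma,g]$ with $\gamma$ a loop is identified with $[x_0,g^{\gamma^{-1}}]$. The five lemma on homotopy exact sequences then makes $\Theta$ a weak equivalence. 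Since everything is in hCW, again by \cite{schon} applied to fibrations over $M$ with fibers in hCW, $\Theta$ is a homotopy equivalence. This yields $L_\infty\simeq LM$.
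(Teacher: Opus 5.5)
Your proposal is correct and follows the paper's proof: you apply Lemma \ref{recognition2} to the principal bundle $G_\infty\to PG_\infty\to M$ and show that $\Theta$ is a homeomorphism, with inverse $\ell\mapsto[\alpha_x,\ell^{\alpha_x}]$ built from the local sections of Lemma \ref{principalGbundle} and shown well defined by the backtracking cancellations you describe. Your fallback (a map of bundles over $M$ that is the identity on the fiber $G_\infty$, followed by the five lemma and the hCW upgrade) is not in the paper, but it is valid and needs only the continuity of $\Theta$.
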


\begin{proof} 
We have a well-defined map 
$$\Theta : PG_\infty \times_{G_{\infty}} G_{\infty}^{ad}\lrar L_\infty\ \ ,\ \ [\gamma, g]\mapsto g^{\gamma^{-1}}$$
which is a homeomorphism.  To see this, for every $x\in M$, choose a geodesic path $\alpha_x= [x_0,x_1,\ldots, x_n,x]\in PG_\infty$ from $x_0$ to $x$ (using the local sections in the proof of Lemma \ref{principalGbundle}). The inverse $\Gamma$ of $\Theta$ can now be defined as follows
$$\Gamma (\gamma ) = [\alpha_x, \gamma^{\alpha_x}]$$
This we can check is independent of the choice of $\alpha_x$, and is continuous.
The homotopy equivalence follows from Lemma \ref{recognition2}.
\end{proof}

\section{Modeling Maps from a two-dimensional Complex}\label{surfaces}

For $X$ and $Y$ path-connected and well-pointed spaces, we will write $\bmap (X,Y)$ (resp. $\map (X,Y)$) the space of based maps (resp. unbased maps) from $X$ to $Y$, with the compact-open topology. Fix a topological group $G$ and consider the product
of commutators map $$\chi_g: G^{2g} \lrar G\ \ \ ,\ \ \ 
\chi_g(x_1,x_2, \cdots,x_{2g}) = [x_1,x_2]\cdot [x_3,x_4] \cdots
[x_{2g-1},x_{2g}]$$ 
where $[a,b] = aba^{-1}b^{-1}$.
Let $\xi_{g}(G)$ denote the homotopy theoretic fiber of the map
$\chi_g$. There is a natural action of $G$ on this homotopy fiber and the following (known) result is stated in \cite{bc}.

\begin{prop}\label{StoBG}
Assume that $M$ is a simply-connected  Riemannian manifold with $\operatorname{conv}(M)>0$. Denote by $S_g$ a compact Riemann surface of genus $g\geq 1$, and set $G=G_\infty$.
\begin{enumerate}
  \item There is a homotopy equivalence
$\xi_g(G) \lrar \bmap (S_g,M).$
\item There is a homotopy equivalence
$PG_\infty \times_G \xi_g(G) \lrar \map (S_g, M).$
\end{enumerate}
\end{prop}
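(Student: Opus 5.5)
We follow the standard route: identify $\bmap(S_g,M)$ as the homotopy fiber of a map between based mapping spaces of wedges of circles, then replace those mapping spaces by products of copies of $G$. Write $S_g$ as the cofiber of the attaching map $w\colon S^1\to\bigvee_{i=1}^{2g}S^1$ of the top cell, representing the word $[a_1,a_2]\cdots[a_{2g-1},a_{2g}]$. Since $w$ is a cofibration up to replacement and $M$ is in hCW, applying $\bmap(-,M)$ gives a fibration sequence
\[
\bmap(S_g,M)\lrar \bmap\Big(\bigvee_{i=1}^{2g}S^1,M\Big)\cong(\Omega M)^{2g}\xrightarrow{\;w^*\;}\Omega M .
\]
Here $w^*$ sends $(\ell_1,\ldots,\ell_{2g})$ to the product of loop commutators, up to the usual reparametrization homotopy. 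Hence $\bmap(S_g,M)\simeq\hofib(w^*)$.

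Next we transport this to $G=G_\infty$. By Theorem \ref{reducedapprox} and Corollary \ref{recognition1}, there is a weak equivalence $\partial\colon\Omega M\to G$, the boundary map of the principal bundle $PG_\infty\to M$, and $\partial$ is $A_\infty$. We expect the main obstacle here. We must show that the square with top map $w^*$, bottom map $\chi_g$, and vertical maps $\partial^{2g}$ and $\partial$ commutes up to a specified homotopy. An $H$-map alone gives only a homotopy for each product. We also need compatibility with inverses. An $A_\infty$ map of grouplike monoids yields a homotopy $\partial(\ell^{-1})\simeq\partial(\ell)^{-1}$. Composing the $A_\infty$ coherences then produces a homotopy $\chi_g\circ\partial^{2g}\simeq\partial\circ w^*$.

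A cleaner way to avoid the coherence bookkeeping is to use $BG\simeq M$. Both sides are $\bmap(-,BG)$ applied to $w$, and $\bmap(\bigvee S^1,BG)\simeq G^{2g}$ compatibly with precomposition, because $\Omega BG\simeq G$ as $A_\infty$-spaces and precomposition by $w$ is evaluation of the word. A homotopy-commuting square of weak equivalences induces a weak equivalence of homotopy fibers, giving $\xi_g(G)\simeq\bmap(S_g,M)$. All spaces are in hCW by \cite{schon}, as in Theorem \ref{pgapprox}, so this is a homotopy equivalence. This proves (1).

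For (2), use the evaluation fibration $\bmap(S_g,M)\to\map(S_g,M)\to M$. We argue exactly as in Lemma \ref{recognition2}, where the case of $S^1$ gives $LM$. The group $G$ acts on $\chi_g$ equivariantly by diagonal conjugation on $G^{2g}$ and conjugation on $G$, since $\chi_g$ is a word map. This induces an action on $\xi_g(G)$. Then $PG_\infty\times_G\xi_g(G)\to PG_\infty/G=M$ is a bundle with fiber $\xi_g(G)$. For $BG$ itself, the standard identification (\cite{kss}, Appendix) is
\[
\map(S_g,BG)\simeq EG\times_G\hofib\big(\bmap(\textstyle\bigvee S^1,BG)\to\bmap(S^1,BG)\big),
\]
with $G$ acting by conjugation, coming from the change-of-basepoint holonomy. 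Next, compare $PG_\infty$ with $EG$ through a $G$-map covering the classifying map $M\to BG$. This map induces a map of bundles over $M\simeq BG$ which is an equivalence on fibers by (1). The five lemma then gives a weak equivalence, and upgrading via hCW yields the claimed homotopy equivalence.
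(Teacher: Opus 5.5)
Your part (1) is essentially the paper's argument. The paper identifies $\bmap(X,M)$ homeomorphically with the homotopy fiber of the relator map via the reduced mapping cone, then uses the $A_\infty$ boundary map $\psi\colon\Omega M\to G$ to obtain a homotopy-commutative square and hence a map of homotopy fibers. Your concern about inverses is harmless: an $H$-map between path-connected grouplike spaces is automatically compatible with inverses up to homotopy. The ``cleaner'' $BG$ variant avoids nothing, since matching $\bmap(\bigvee S^1,BG)\simeq G^{2g}$ with $w^*$ is the same word-map homotopy.

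Part (2) has a genuine gap. The identification $\map(S_g,BG)\simeq EG\times_G\hofib(\cdots)$ with the conjugation action is not what \cite{kss} proves. That reference only gives $LBG\simeq EG\times_G G^{ad}$, the case of $S^1$. For $S_g$, your ``standard identification'' is exactly statement (2) with $M$ replaced by $BG$. The paper notes that it appears in \cite{grusal} without proof in the unbased case, so you are assuming what must be proved. As written it is also ill-posed: $G$ does not act strictly on $\bmap(\bigvee S^1,BG)$; only $\Omega BG$ acts, up to homotopy, via holonomy.

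The ``five lemma'' step also has nothing to act on. The $G$-map $PG_\infty\to EG$ only yields $PG_\infty\times_G\xi_g(G)\simeq EG\times_G\xi_g(G)$. That map is the identity on fibers, so (1) plays no role there. You never construct a map over $M$ between $\map(S_g,M)$ and either Borel construction. Two fibrations over $M$ with equivalent fibers need not have equivalent total spaces, because their holonomies can differ.

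The missing idea is to compute the holonomy of the evaluation fibration $\bmap(S_g,M)\to\map(S_g,M)\to M$ and match it coherently with conjugation. The paper does this in four steps:
\begin{enumerate}
\item It shows the holonomy is precomposition with the coaction $c\colon X\to S^1\vee X$ (Proposition \ref{holeval}). On the one-skeleton this is $f_i\mapsto\gamma^{-1}*f_i*\gamma$ (Lemma \ref{one-skeleton-conjugation}).
\item It transports this action through the homeomorphism $q$ to the homotopy fiber of the relator map.
\item It checks that $(\psi,\eta)$ intertwines this action with the holonomy $z\mapsto a(\psi(\gamma)^{-1},z)$ of $PG_\infty\times_G\xi_g(G)$. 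This holds up to coherent homotopies supplied by the $A_\infty$ structure of $\psi$.
\item It invokes Stasheff's classification of fibrations by strong homotopy holonomy to conclude that the total spaces are equivalent.
\end{enumerate}

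If you want to stay in your $BG$ framework, write $\map(S_g,BG)$ as the homotopy pullback of $\map(\bigvee S^1,BG)\xrightarrow{w^*}LBG\leftarrow\map(D^2,BG)\simeq BG$. Then prove $\map(\bigvee^k S^1,BG)\simeq EG\times_G G^k$ over $BG$, naturally with respect to $w^*$ and $\chi_g$. That naturality is exactly the coherence you skipped, and it is where the real work lies.
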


The preceding proposition appears in \cite{grusal}, where it is stated
for any group model \(G\) of \(\Omega M\), although no proof is given
for the unbased case. Since the present paper builds on \cite{bc}, we
show that Proposition \ref{StoBG} is a special case of a more general
result applying to finite CW complexes of dimension at most two, rather
than only to the circle and Riemann surfaces.

More precisely, suppose \(X\) has one \(0\)-cell, \(n\) \(1\)-cells, and \(m\) \(2\)-cells attached by relator words
\[
r_1,\dots,r_m\in F_n, \ \ n,m\geq 0
\]
where $F_n$ is the free group on $n$-generators corresponding to the fundamental group of the $1$-skeleton.
These determine a \textit{relator map}
$R:G^n\longrightarrow G^m$ given by
\begin{equation}\label{relator} R(g_1,\dots,g_n)=\bigl(r_1(g_1,\dots,g_n),\dots,r_m(g_1,\dots,g_n)\bigr).
\end{equation}
Denote the ordinary based path space by
$$P_0(G^m)=\{\gamma:I\to G^m\mid \gamma(1)=e\}
$$
Define
$\xi_R(G)$ to be the pullback of the relator map in this diagram
\begin{equation}\label{pullback}\xymatrix{
\xi_R(G)\ar[r]\ar[d]&P_0(G^m)\ar[d]^{ev_0}\\
G^n\ar[r]^R&G^m}
\end{equation}
In other words, $\xi_R(G)=G^n\times_{G^m}P_0(G^m)$ is the homotopy fiber of $R$. Note that \(G\) acts on \(G^n\) and \(G^m\) by conjugating every entry, and it fixes the basepoints. Every relator word is conjugation-equivariant, so the map \(R\) is \(G\)-equivariant, and hence the pullback \(\xi_R(G)\) inherits a strict \(G\)-action. 

The following theorem is the main result of this section. The case \(m=0\), \(n=1\) recovers the free loop space model \S\ref{modelfreeloops}, while \(m=1\), \(n=2g\), with \(R\) given by the product
of commutators, recovers Proposition \ref{StoBG}.

\begin{thm}\label{XtoBG} Let $G=G_\infty$ be our group model for a compact simply-connected Riemannian manifold $M$ and \(X\) a finite connected CW complex with one \(0\)-cell, obtained
by attaching \(m\geq 0\) two-cells to \(\bigvee_{j=1}^{n}S^{1}\).
Then \(\xi_R(G)\) is a model for $\bmap (X,M)$. In other words, there is a map
$$u: \bmap (X,M)\lrar \xi_R(G)$$
which is a weak homotopy equivalence. Moreover, under the induced conjugation action discussed above, there is a weak homotopy equivalence
\[
\map (X,M)\simeq PG_\infty\times_{G_\infty} \xi_R(G).
\]
\end{thm}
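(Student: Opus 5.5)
The plan is to use the cofibration sequence defining \(X\) and the fact, from Theorem \ref{reducedapprox}, that \(M\simeq BG\) with \(G=G_\infty\). Since \(X\) is the cofiber of \(C\colon \bigvee_{j=1}^m S^1\to\bigvee_{i=1}^n S^1\), applying \(\bmap(-,M)\) gives a fibration sequence
\[
\bmap(X,M)\longrightarrow \bmap\Bigl(\bigvee_{i=1}^n S^1,M\Bigr)\cong(\Omega M)^n\xrightarrow{\ C^*\ }(\Omega M)^m\cong\bmap\Bigl(\bigvee_{j=1}^m S^1,M\Bigr).
\]
The restriction map here is a Hurewicz fibration because \(\bigvee S^1\hookrightarrow X\) is a cofibration. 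Hence \(\bmap(X,M)\simeq \hofib(C^*)\).

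First I identify \(C^*\) with \(R\) under the group model. Each component of \(C^*\) sends \((\omega_1,\dots,\omega_n)\) to the loop \(r_j(\omega_1,\dots,\omega_n)\), built from concatenations and inverses up to reparametrization. So \(C^*\) is homotopic to the relator map on \((\Omega M)^n\), computed with the Moore loop product and inversion.

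Next I transport this to \(G\) through the zigzag of Theorem \ref{main}: \(G_\infty\leftarrow H_\infty\to\Omega M\). On \(H_\infty\), inversion is realized by reversing tuples, and concatenation is strictly respected by \(\phi_\infty\) by \eqref{Hmap}. The quotient \(H_\infty\to G_\infty\) is a monoid map, and it sends tuple reversal to the group inverse. Words in generators and inverses therefore commute strictly with both maps, provided \(H_\infty\) is given the reversal involution. Doing this yields a commutative ladder
\[
(\Omega M)^n\to(\Omega M)^m,\qquad H_\infty^n\to H_\infty^m,\qquad G^n\to G^m,
\]
whose vertical maps are homotopy equivalences (Theorem \ref{happrox} and the Corollary to Lemma \ref{ndrpair}). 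Comparing homotopy fibres gives the map \(u\), or a zigzag, and shows that it is a weak equivalence onto \(\xi_R(G)\) as defined in \eqref{pullback}.

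For the unbased statement I use the evaluation fibration \(\bmap(X,M)\to\map(X,M)\to M\) and compare it with the bundle \(\xi_R(G)\to PG_\infty\times_G\xi_R(G)\to PG_\infty/G\cong M\). The latter is a fibre bundle associated to the principal bundle of Lemma \ref{principalGbundle}. The cleanest route mimics Lemma \ref{recognition2}:
\begin{itemize}
\item Use \(\map(X,BG)\simeq EG\times_G \hofib(R)\), which holds because an unbased map is a based map up to the action of \(G\simeq\Omega BG\) by conjugation on the \(1\)-cells.
\item Transport this along the \(G\)-map \(PG_\infty\to EG\) covering \(M\to BG\).
\end{itemize}
Concretely, I would build a map of fibrations over \(M\) by sending a class \([\gamma,\xi]\) to the map obtained by conjugating the based data by the marked path \(\gamma\), exactly as \(\Theta\) does in Proposition \ref{modelfreeloops}. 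The five lemma on homotopy groups, together with simple connectivity of \(M\), then gives the weak equivalence.

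The main obstacle is making the equivariance strict. Inversion on \(\Omega M\) is not strict, and the zigzag through \(H_\infty\) is only monoidal. So the \(G\)-action on \(\hofib(R)\) by conjugation must be matched, up to coherent homotopy, with the conjugation action coming from moving the basepoint of \(X\). My first attempt would be to avoid \(\Omega M\) altogether in the unbased step and work entirely with marked combinatorial models, as with \(L_\infty\). That is, define \(\map\)-type spaces of compatible marked loops, on which \(G\) acts strictly, and check the local triviality over geodesic balls as in Lemma \ref{principalGbundle}.
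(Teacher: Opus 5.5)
Your based half is sound and takes a different route from the paper. The paper uses the $A_\infty$ boundary map $\psi\colon\Omega M\to G$ and a homotopy $R\psi^n\simeq\psi^m\varphi$ built from its coherence data. You instead use the ladder $(\Omega M)^n\leftarrow H_\infty^n\to G^n$, which commutes strictly because tuple reversal on $H_\infty$ is carried by $\phi_\infty$ to Moore reversal and by the quotient to the group inverse. This gives you strict commutativity with no coherence to check, at the cost of a zigzag; you can still extract a single map $u$ because the spaces involved are in hCW. One small correction: $\bmap(X,M)\cong\hofib(C^*)$ holds because $X$ is the mapping cone of $C$ (the paper's homeomorphism $q$). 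It does not follow from the restriction to $\bigvee S^1$ being a fibration, since that fibration's fibre is $(\Omega^2M)^m$.

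The unbased half has a genuine gap. Your first bullet, $\map(X,BG)\simeq EG\times_G\hofib(R)$, is the statement to be proved with $BG$ in place of $M$. Only the case $X=S^1$ is the citable fact behind Lemma~\ref{recognition2}. Your heuristic of ``conjugation on the $1$-cells'' describes the holonomy of $ev$ only on the $(\Omega M)^n$-coordinate (Lemma~\ref{one-skeleton-conjugation}). On the null-homotopy coordinate, conjugating a path in $P_0((\Omega M)^m)$ does not land in $P_0((\Omega M)^m)$ and must be corrected. Matching this holonomy with the strict conjugation action on $\xi_R(G)$ is exactly the content of the claim, and you assert it rather than prove it.

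The concrete map $[\gamma,\xi]\mapsto$ ``conjugated based data'' cannot be defined as stated. A point $\xi=(\mathbf g,\lambda)$ of $\xi_R(G)$ consists of classes modulo the Milnor relation, and these do not determine loops in $M$: $[x_0,x_1,x_0]=[x_0]$ in $G_\infty$, while $\phi_\infty$ sends $[x_0,x_1,x_0]$ to a nonconstant loop. Likewise a path in $G^m$ determines no null-homotopy in $M$. The map $\Theta$ lands only in the combinatorial $L_\infty$, and $L_\infty\simeq LM$ was itself obtained from Lemma~\ref{recognition2}, not from a direct map to $LM$. So you have no map of fibrations to which the five lemma can be applied.

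Your fallback runs into a tension you do not resolve. $G$-type data carries the strict action and the local triviality of Lemma~\ref{principalGbundle}, but it has no map to $\map(X,M)$. $H$-type data does map to $\map(X,M)$, but in $H_\infty$ conjugation by a geodesic segment is neither invertible nor multiplicative (for instance $[p,y]\cdot[y,p]=[p,y,p]\neq[p]$). So the local trivialization over geodesic balls fails. A workable version would need three things:
\begin{enumerate}
\item an intermediate space $\mathcal E\to M$ of $H$-type data at a variable basepoint $x$, namely an $n$-tuple $\mathbf h$ of marked loops at $x$ and a path of $m$-tuples from the relator words of $\mathbf h$ (evaluated with reversal) to $[x]$;
\item a map $\mathcal E\to\map(X,M)$ over $M$ via $\phi_\infty$, together with a $G$-equivariant map from the pullback $PG_\infty\times_M\mathcal E$ to $\xi_R(G)$, given by the quotient map followed by conjugation by the marked path;
\item a proof, via lifting functions, that $\mathcal E\to M$ is a fibration.
\end{enumerate}
The paper instead compares the holonomy of $ev$ (precomposition with the coaction, Proposition~\ref{holeval}) with the action $\bar a$ through $(\psi,\eta)$ as strong homotopy actions, and then invokes Stasheff's classification; it only sketches the coherences ("we skip the details"). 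Your proposal supplies neither that mechanism nor a working substitute.
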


\begin{proof} 
In this proof, $\Omega M$ will mean everywhere the space of ordinary loops (non-Moore). We can assume wlog that the $1$-skeleton of $X$ is a wedge $\bigvee^nS^1$,  to which we are attaching $m$ two-cells. The attaching-map parametrizations 
$
C:W_m=\bigvee_{i=1}^{m}S^1
\longrightarrow
W_n=\bigvee_{j=1}^{n}S^1;
$
so that
\[
\operatorname{Map}_*(W_n,M)
\longrightarrow
\operatorname{Map}_*(W_m,M),
\qquad u\longmapsto u\circ C.
\]
corresponds 
under the identifications
\(\operatorname{Map}_*(W_n,M)\cong(\Omega M)^n\) and
\(\operatorname{Map}_*(W_m,M)\cong(\Omega M)^m\), to the relator map
\[
\varphi(\mathbf{x})
=\bigl(r_1(\mathbf{x}),\ldots,r_m(\mathbf{x})\bigr),
\]
Let's choose the reduced mapping-cone model
\begin{equation}\label{conepresentation}
X=W_n\cup_C\operatorname{Cone}(W_m),
\qquad
W_k=\bigvee_{j=1}^{k}S^1,
\end{equation}
Choosing the standard path-space model
\(P_0(\Omega M)=\{F:I\to\Omega M\mid F(1)=e_{x_0}\}\),
we can write the homotopy fiber of $\varphi: (\Omega M)^n\rightarrow (\Omega M)^m$ in the form
\begin{equation}\label{hofibphi}
\hbox{hofib} (\varphi)
=
\{({\bf x},\lambda)\in (\Omega M)^n\times P_0((\Omega M)^m)\mid \lambda (0)= \varphi ({\bf x}),\ \lambda(1)=e\},
\end{equation}
We claim that with the reduced cone presentation \eqref{conepresentation}, there is a homeomorphism
\begin{equation}\label{themapq}
q:\operatorname{Map}_*(X,M)\xrightarrow{\ \cong\ }
\bigl\{(\mathbf{x},\lambda)\mid
\lambda(0)=\varphi(\mathbf{x}),\ \lambda(1)=e\bigr\}
=\operatorname{hofib}(\varphi).
\end{equation}
To see this, write $X$ in the form of the reduced mapping-cone model
\[
X= W_n\cup_C \operatorname{Cone}_*(W_m),
\qquad
\operatorname{Cone}_*(W_m)
=\frac{W_m\times I}{W_m\times\{1\}\,\cup\,\{*\}\times I},
\]
where \([a,0]\) is identified with \(C(a)\in W_n\).

For a based map \(f:X\to M\), let \(u_f=f|_{W_n}\) and define
$$
\lambda_f(s)(a)=f([a,s]), a\in W_m,\ s\in I.$$
Thus \(\lambda_f(s)\) is a based map \(W_m \to M\) for every \(s\).
Moreover,
$\lambda_f(0)=u_f\circ C,
\qquad
\lambda_f(1)=e,
$
where \(e: W_m\to M\) is the constant map at \(x_0\). Identifying
\(\operatorname{Map}_*(W_n,M)\) with \((\Omega M)^n\) and
\(\operatorname{Map}_*(W_m,M)\) with \((\Omega M)^m\), we set
\[
q(f)=(\mathbf{x}_f,\lambda_f) \in \hofib (\varphi)
\]
This map is a homeomorphism whose inverse glues this
data to form a based map \(X\to M\).

Let
$
\psi:\Omega M\longrightarrow G
$
be the boundary map of the principal bundle
$G\rightarrow PG_\infty\rightarrow M
$.
By Corollary~2.3, $\psi$ is an $A_\infty$-map (in particular an $H$-map) and a weak homotopy
equivalence. 
The $A_\infty$-structure of $\psi$ provides a homotopy between
the evaluation of any fixed word before and after applying $\psi$.
Applying this simultaneously to the relator words
$r_1,\ldots,r_m$ gives a specified homotopy
$H:R\psi^n\simeq\psi^m\varphi$.
In particular, the diagram
\begin{equation}\label{diagram2}
\xymatrix{
(\Omega M)^n \ar[r]^-{\varphi} \ar[d]_{\psi^n}
&
(\Omega M)^m \ar[d]^{\psi^m}
\\
G^n \ar[r]_-R
&
G^m
}
\end{equation}
is homotopy commutative which then induces a map of homotopy fibers
$\eta:\operatorname{hofib}(\varphi)\longrightarrow\operatorname{hofib}(R)=\xi_R(G)$. This map is given by
\begin{equation}\label{mapeta}\eta ({\bf x},\lambda) =  (\psi^n {\bf x}, H_{\mathbf x}*(\psi^m\circ\lambda))
\end{equation}
where $H$ is the fixed homotopy induced from $\psi$ and $H_{\bf x}$ is the trace of $x$, i.e. $H_{\bf x}=H(-,{\bf x})$, $x\in (\Omega M)^n$, so that
$H_{\mathbf{x}} * (\psi^m\circ\lambda)$
is a path in \(G^m\) from
$R\bigl(\psi^n(\mathbf{x})\bigr)$
to \(e\).
Since $\psi$ is a homotopy equivalence, so is $\eta$.  

We can then define
\[
u:=\eta\circ q:
\map_*(X,M)\longrightarrow\xi_R(G).
\]
This is a homotopy equivalence, which proves the first assertion.

We now turn to the unbased mapping space. We use the classical principle that we can reconstruct a fibration from its strong homotopy holonomy action 
\cite{stasheff}. A holonomy is not just a map as in \eqref{holonomy}, but consists of a collection of maps forming an \(A_\infty\)-action, together with higher coherent homotopies among the iterated actions, parametrized by the associahedra. 

Consider the two fibrations over
$M$
\begin{equation}\label{twofibrations}
\map_*(X,M)
\longrightarrow
\map(X,M)
\xrightarrow{\ ev\ }
M
\end{equation}
and
\begin{equation}\label{associatedfibration}
\xi_R(G)
\longrightarrow
PG_\infty\times_G\xi_R(G)
\longrightarrow
M .
\end{equation}
The strategy is to compare their strong holonomy actions.
In \eqref{associatedfibration}, $G$ acts diagonally 
as in \eqref{egad}. If we write explicitly
\[
\xi_R(G)=\hofib (R)
=
\{({\bf y},\gamma )\in G^n\times P_0(G^m) \mid \gamma (0)=R({\bf y}),\ \gamma (1)=e\}
\]
then the action
$a:G\times\xi_R(G)\longrightarrow\xi_R(G)$ is given on an element in the
homotopy fiber $z=(\mathbf g,\lambda)$ by
\begin{equation}\label{holonomya}
a\bigl(h,(\mathbf g,\lambda)\bigr)
=
\bigl(
h\mathbf g h^{-1},
h\lambda h^{-1}
\bigr).
\end{equation}
A choice of holonomy of the associated bundle
\eqref{associatedfibration} is $h_{\mathrm{ass}}(\gamma,z) =
a\bigl(\psi(\gamma)^{-1},z\bigr).$
Indeed, lift $\gamma$ to a path in $PG_\infty$ beginning at a chosen
point $e_0$ of the fiber over the basepoint.  Its endpoint is
$\psi(\gamma) e_0$.  Hence the lift
$t\longmapsto[\widetilde\gamma(t),z]$
in the associated bundle ends at
$[\psi(\gamma)e_0,z]
=
[e_0,\psi(\gamma)^{-1}z]$.

If we write $\bar a(h,z)=a(h^{-1},z)$, we  claim that
there is a diagram of strong homotopy actions
\begin{equation}\label{holonomydiagram}
\xymatrix{
\Omega M\times\map_*(X,M)
\ar[r]^-{h_{ev}}
\ar[d]_{id\times q}
&
\map_*(X,M)
\ar[d]^q\\
\Omega M\times\hofib(\varphi)
\ar[r]^-{h_{\varphi}}\ar[d]_{\psi\times \eta}&\hofib(\varphi)\ar[d]_{\eta}
\\
G\times\xi_R(G)
\ar[r]_{\bar{a}}
&
\xi_R(G)
}
\end{equation}
A square diagram of strong-homotopy actions would mean that the diagram
commutes up to \emph{coherent homotopies}: the two composites are joined
by a specified homotopy compatible with the actions, together with
higher homotopies for all iterated products and action maps, satisfying
the \(A_\infty\) (Stasheff) coherence relations.
Let's now see how this applies to diagram \eqref{holonomydiagram}. 

Consider the top part of the diagram .
The holonomy action \(h_{ev}\) is described in Appendix C: the action of $\gamma\in\Omega M$ on the restriction of $f$ to the $1$-skeleton is conjugation by Lemma \(\ref{one-skeleton-conjugation}\), so the first coordinate of $q(h_{ev}(\gamma,f))$ is $\gamma^{-1}\mathbf{x}_f\gamma$, which is precisely $h_{\varphi}(\gamma,q(f))$ on the first coordinate. Similarly on the second coordinate, and so we define the strong-homotopy
action \(h_\varphi\), including its higher structure maps, by
transporting \(h_{\mathrm{ev}}\) through the homeomorphism \(q\).
Consequently \(q\) intertwines the actions strictly at every level and the diagram commutes as strong homotopy actions.
 
Turning to the bottom square, we compare \(h_\varphi\) with the strict action \(\bar a\) through
\(\eta\). 
To that end, we need make the strong-homotopy action 
$h_\varphi$ more explicit. A point in $\hofib (\varphi)$ is of the form $({\bf x},\lambda)$ \eqref{hofibphi}. The map $h_\varphi$ on the first coordinate is represented by conjugation,
$
(\gamma,\mathbf x)\longmapsto \gamma^{-1}\mathbf x\gamma,
$ so it is strict, 
but on the ``path coordinate'' (second factor) 
conjugation of an element in $P_0(\Omega M)^m$ by $\gamma\in\Omega M$ does not land in that space, only up to homotopy!
To correct this, one must insert the coherence paths 
$
\varphi(\gamma^{-1}\mathbf x\gamma)
\simeq
\gamma^{-1}\varphi(\mathbf x)\gamma
$, and
$
\gamma^{-1}e\gamma\simeq e
$. After doing so, we write $\lambda^\gamma$ the resulting path in
\(P_0((\Omega M)^m)\).
Homotopy lifting  extends this map to a strong-homotopy (\(A_\infty\))-action
\(h_\varphi\).

Writing
\(z=(\mathbf x,\lambda)\), and recalling the map $\eta$ from \eqref{mapeta}, the two composites around the square are
\[
\eta h_\varphi(\gamma,z)
=
\left(
\psi^n(\gamma^{-1}\mathbf{x}\gamma),
\,
H_{\gamma^{-1}\mathbf{x}\gamma}
*
(\psi^m\circ\lambda^\gamma)
\right)
\]
and
\[
\bar a\bigl(\psi(\gamma),\eta(z)\bigr)
=
\left(
\psi(\gamma)^{-1}\psi^n(\mathbf{x})\psi(\gamma),
\,
\psi(\gamma)^{-1}
\left(
H_{\mathbf{x}} * (\psi^m\circ\lambda)
\right)
\psi(\gamma)
\right).
\]
On the first coordinate, this comes down to constructing a coherent homotopy
between the maps
$\Omega M\times\Omega M\longrightarrow G$
given by
$(\gamma, x)\longmapsto
\psi(\gamma^{-1} x\gamma)
$
and
$
(\gamma, x)\longmapsto
\psi(\gamma)^{-1}\psi(x)\psi(\gamma)$ (one factor at a time). This is clearly provided by $\psi$ and its $A_\infty$ structure. 
On the second coordinate, we must compare, compatibly with the homotopy on the first coordinate, the two maps
$
\Omega M\times\hofib(\varphi)\longrightarrow P_0(G^m)
$
given by
$
(\gamma,(\mathbf x,\lambda))
\longmapsto
H_{\gamma^{-1}\mathbf x\gamma}
*
(\psi^m\circ\lambda^\gamma)
$
and
\[
(\gamma,(\mathbf x,\lambda))
\longmapsto
\psi(\gamma)^{-1}
\bigl(H_{\mathbf x}*(\psi^m\circ\lambda)\bigr)
\psi(\gamma).
\]
The first-level
homotopy in the coherent system relating the two strong-homotopy
actions is constructed using \(H\) and the structure of \(\psi\) by relating the iterated word and conjugation maps. The higher structure maps of \(\psi\) provide the
higher homotopies. This data is enough to make \((\psi,\eta)\) a morphism of
strong-homotopy actions. We skip the details.

To conclude, by Stasheff's
classification of fibrations in terms of strong-homotopy \cite{stasheff, stasheff2},
the two fibrations \eqref{twofibrations} and \eqref{associatedfibration} have weakly homotopy equivalent total spaces
\[
\map(X,M)
\simeq
PG_\infty\times_G\xi_R(G).
\]
This proves the second assertion.
\end{proof}

\appendix
\section{Non-Moore loops}\label{ordinary}

Generally in applications, people work with ordinary loops (i.e. non-Moore loops) and paths parameterized over the unit interval.
The unbased path space is $Path(X)=C^0([0,1],X)$, and  the loop spaces are $\Omega X$ and $LX=X^{S^1}=C^0(S^1,X)$.
Loop approximation results can be stated in this 
case as well, as done in this paper, but the details are more tedious due to the constant reparameterizations required for most arguments. This is the  reason why we opted early on to work with Moore loops.
In this Appendix we collect further useful results, stated for non-Moore loops, for the benefit of users. 

From now on, $M$ is always a compact Riemannian manifold, $\Omega M$ and $PM$ are the ordinary (i.e. non-Moore) loops and paths, respectively.

As in \S\ref{prelims}, a geodesic segment between two distinct points
$p,q$ is a minimal geodesic path
\[
\gamma\colon [a,b]\longrightarrow M,
\qquad \gamma(a)=p,\quad \gamma(b)=q,
\]
parameterized so that
\[
(b-a)\|\dot\gamma(t)\|=d_M(p,q),
\qquad t\in[a,b].
\]
Note that when $[a,b]=[0,1]=I$, the speed is exactly the distance between the points. 

We can also define an (ordinary) broken geodesic
path $\gamma : [0,1]\rightarrow M$ to be a path such that for a finite partition $t_0=0\leq t_1\leq \cdots \leq t_n=1$, and all $1\leq i\leq n$, the restrictions
$$\gamma_i :[t_{i-1},t_i]\rightarrow M$$
are geodesic segments from $\gamma (t_{i-1})$ to $\gamma (t_i)$, for all $i$. As in \S\ref{piecewisemodels}, we write $x_i := \gamma (t_i)$ in $M$ the marked points.

\vskip 10pt
\noindent{\bf Properties of geodesic segments}: The following properties hold.

\begin{enumerate}[(i)]
\item  Let $f: [c,d]\rightarrow [a,b]$ be an affine reparameterization $f(x)=\alpha x + \beta$, and let $\gamma : [a,b]\rightarrow M$ be a geodesic segment, then the composition $\gamma\circ f: [c,d]\rightarrow M$ is again a geodesic segment, since
the composite is still constant speed and length-minimizing.
\item The restriction of a geodesic segment $[a,b]\rightarrow M$ to $[a,t]\subset [a,b]$ is still a minimal geodesic, and
$\displaystyle 
L\big(\gamma|_{[a,t]}\big)=\frac{t-a}{b-a}\,d_M(p,q).$  In particular, if $\gamma : [a,b]\rightarrow M$ is a geodesic segment, and $a<c<b$, then the restrictions $\gamma_{|[a,c]}: [a,c]\rightarrow M$ and $\gamma_{|[c,b]}$ are geodesic segments as is (i.e. no change of parameterizations needed).
\item  The parameterization of a geodesic segment is, up to an affine change of parameter, an arclength parameterization. Say $d_M(p,q)=c$, $\gamma : [0,1]\rightarrow M$ a geodesic segment between $\gamma (0)=p,\gamma (1)=q$, and $\lambda : [0, c]\rightarrow M$ is the arclength parameterization of that path. Then $\gamma (t)=\lambda (ct)$. Note that $\lambda$ has unit speed. 
\item Let $\gamma:[0,1]\to M$ be a broken geodesic such that
$||\dot\gamma (t)|| = c$ is constant for all $t\in (0,1)\setminus \{t_1,\ldots, t_{n-1}\}.$ Then $\displaystyle c=L(\gamma) = \int_0^1\|\dot \gamma (t)\|dt$. This is a direct
consequence of (ii).\\
\end{enumerate}

The above leads to the following definition.

\begin{defn}\rm (Uniform broken geodesics, non-Moore loops). A broken geodesic $\gamma : [0,1]\rightarrow M$ is uniform if on $(0,1)\setminus \{t_1,\ldots, t_{n-1}\}$ it is parameterized by the constant speed $\|\dot\gamma(t)\| = L(\gamma )$.
\end{defn}

The next proposition shows that the broken geodesic paths (and loops) are dense in all paths (resp. loops) in the compact-open topology.

\begin{prop}\label{density} $\Omega^{bg} (M)$ is a dense subset of $\Omega M$.
\end{prop}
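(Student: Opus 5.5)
Given an ordinary loop $\gamma\in\Omega M$ (parameterized on $I=[0,1]$, based at $x_0$) and $\epsilon>0$, we will produce a broken geodesic loop $\beta\in\Omega^{bg}(M)$ with $\sup_{t\in I} d_M(\gamma(t),\beta(t))<\epsilon$. Since $M$ is compact, hence metrizable, the compact-open topology on $\Omega M$ is the topology of the uniform metric (the ordinary-loop version of the remark after \eqref{dinfinity}, \cite[Proposition 5.3]{hu}). Such a $\beta$ therefore lies in any prescribed neighborhood of $\gamma$, which gives density. We may assume $\epsilon<\operatorname{conv}(M)$.

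The construction is a piecewise geodesic interpolation. Because $I$ is compact, $\gamma$ is uniformly continuous, so we can choose $N$ with $d_M(\gamma(s),\gamma(s'))<\epsilon/3$ whenever $|s-s'|\le 1/N$. Set $t_i=i/N$ and $x_i=\gamma(t_i)$, so that $x_0=x_N$ is the basepoint. Consecutive points satisfy $d_M(x_{i-1},x_i)<\epsilon/3<\operatorname{conv}(M)$, so each pair lies in a common geodesically convex ball and is joined by a unique minimal geodesic segment. On $[t_{i-1},t_i]$ we let $\beta$ be this segment, affinely reparameterized as in property (i). The result $\beta$ is a broken geodesic with marked points $x_i$ and $\beta(0)=\beta(1)=x_0$, so $\beta\in\Omega^{bg}(M)$.

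For the estimate, fix $t\in[t_{i-1},t_i]$. By property (ii), $d_M(\beta(t),x_{i-1})\le L(\beta|_{[t_{i-1},t_i]})=d_M(x_{i-1},x_i)<\epsilon/3$. Uniform continuity also gives $d_M(\gamma(t),x_{i-1})<\epsilon/3$. The triangle inequality then yields $d_M(\gamma(t),\beta(t))<2\epsilon/3<\epsilon$.

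The only point needing care is the passage from uniform closeness to the compact-open topology, which rests on the metrizability statement cited above. The construction itself is routine. If one wants the approximant in $\Omega^{ubg}(M)$ instead, one reparameterizes $\beta$ to constant speed; a uniform bound would then need a uniform control on the segment lengths, which I expect to be the only mildly delicate step.
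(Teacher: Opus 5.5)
Your argument is correct and is essentially the paper's proof: both sample $\gamma$ at a fine uniform subdivision $t_i=i/N$ and join consecutive samples by minimal geodesic segments to get a uniformly close broken geodesic loop. The only difference is in the estimate. The paper picks $N$ via a Lebesgue number for the cover $\gamma^{-1}\bigl(B(p,\min\{\epsilon/2,\delta/2\})\bigr)$ and uses convexity to keep each segment inside the same small ball as $\gamma([t_{i-1},t_i])$, whereas you use uniform continuity and bound $d_M(\beta(t),x_{i-1})$ by the segment's length, which is equally valid.
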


\begin{proof} 
Let $\delta$ be as defined in \eqref{delta}, and define the path space $\operatorname{Path}(M)=M^I$. Any element
$\gamma \in Path^{bg}(M)$ has a subdivision $\ t_0=0\leq t_1\leq \cdots \leq t_n=1$  such that $d_M(\gamma (t_i),\gamma (t_{i+1}))\leq\delta,\ 0\leq i\leq n-1\}$. In other words, $\gamma$ has a marking $\ (x_0, x_1,\ldots, x_n)$   satisfying $ d_M(x_i,x_{i+1})\leq\delta,\ 0\leq i\leq n-1\}$.  We will write $\operatorname{Path}^{bg}(M) = \operatorname{Path}^{bg}_\delta(M)$ when we insist that any broken geodesic path has this type of marking. 

Define $\operatorname{Path}^{bg}_\delta(M,n)$ to be the subspace of $\operatorname{Path}^{bg}(M)$ of all broken geodesic paths that have $n$ marked points $x_1,\ldots, x_n$ or less, and consecutive distances $d(x_{i-1},x_i)\leq\delta$. 
We need to show that
\begin{eqnarray*}
\operatorname{Path}^{bg} (M) 
&=&\bigcup_{n\geq 1}\operatorname{Path}^{bg}_\delta(M,n)
\end{eqnarray*}
is dense in $\operatorname{Path} (M)$.

Let $\gamma : [0,1]\rightarrow M$  be any path in $M$ and let $\epsilon > 0$. The family 
$$\left\{\gamma^{-1}\left(B(p,\min\left\{\dfrac{\epsilon}{2}, \frac{\delta}{2} \right\})\right), \ p\in M \right\}$$ is an open cover of $[0,1]$ which is compact. It has a Lebesgue number $\ell > 0$.
Let $n$ be a positive integer satisfying $\dfrac{1}{n}< \ell .
$ 
Any subset of diameter ${1\over n}$ or less is in some element of the cover, by definition of the Lebesgue number. It follows that
for any $i\in \{0,1,\dots,n-1\}$,  
the entire image $\displaystyle \gamma \left(\left[\frac{i}{n}, \frac{(i+1)}{n}\right]\right)$ lies in $B (p,\min\left\{\frac{\epsilon}{2}, \frac{\delta}{2}\right\} )$ for some $p\in M.$  
Consider the marking 
${\bf m} = [x_0,\ldots, x_n]$ with
$\displaystyle x_i = \gamma \left({i\over n}\right), 0\leq i\leq n$. The consecutive marked points $x_i,x_{i+1}$ are in the same geodesic ball, so we can write $\sigma_i$ 
the unique geodesic segment between them. Notice that the entire segment is also in that ball by convexity.
Concatenating these segments gives a broken geodesic $\sigma$ in $\operatorname{Path}^{bg}_\delta (M)$. Clearly, 
$d_\infty (\sigma, \gamma) \leq {\epsilon}$.  
It follows that the space of broken geodesics in $M$ is dense in the space of all paths in $M$, with the compact topology. 

Similarly, we can define the subspace of loops $\Omega^{bg}_{\delta}(M,n)\subset \operatorname{Path}^{bg}_\delta(M,n)$ of all broken geodesics with $n$ or less marked points, starting and ending at $\gamma (t_0) = \gamma (t_k) = x_0, k\leq n$. By the exact same arguments as above, $\displaystyle \Omega^{bg}(M)=\bigcup_{n\geq 1}\Omega^{bg}_{\delta}(M,n)$
is dense in $\Omega M$.
\end{proof}

\begin{cor} If $\gamma $ is any path in a compact Riemannian manifold $M$, then $\gamma$ is homotopic,  relative to its endpoints, to a broken geodesic.
\end{cor}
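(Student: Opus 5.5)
The plan is to use the density result just established (Proposition \ref{density}) together with Lemma \ref{homotopicmaps}, which says that two paths with the same endpoints and $d_\infty$-distance less than $\operatorname{conv}(M)$ are homotopic relative to their endpoints. Lemma \ref{homotopicmaps} is stated for Moore paths, but its Step 1 (equal parameter length $r=1$) is exactly the ordinary-path case. So it applies verbatim here, since all paths are parameterized over $I=[0,1]$.

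First fix a path $\gamma:I\to M$ and take $\epsilon<\operatorname{conv}(M)$, say $\epsilon=\delta$, where $\delta\le \operatorname{conv}(M)/4$ by \eqref{delta}. Run the construction in the proof of Proposition \ref{density}, and do not merely quote its density conclusion. Choose $n$ with $1/n$ smaller than the Lebesgue number of the cover $\{\gamma^{-1}(B(p,\min\{\epsilon/2,\delta/2\}))\}$. Set $x_i=\gamma(i/n)$, and let $\sigma$ be the broken geodesic whose restriction to $[i/n,(i+1)/n]$ is the unique geodesic segment from $x_i$ to $x_{i+1}$, affinely parameterized. The key point is that this particular approximant has the \emph{same endpoints} as $\gamma$, namely $\sigma(0)=x_0=\gamma(0)$ and $\sigma(1)=x_n=\gamma(1)$. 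This is why one should use the construction rather than density alone, since an arbitrary nearby broken geodesic need not share the endpoints.

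Next verify the distance bound pointwise. For $t\in[i/n,(i+1)/n]$, both $\gamma(t)$ and the segment $\sigma|_{[i/n,(i+1)/n]}$ lie in a single ball $B(p,\min\{\epsilon/2,\delta/2\})$. For $\gamma(t)$ this follows from the Lebesgue number choice. For the segment it follows from convexity, because that radius is below $\operatorname{conv}(M)$. Hence $d_M(\gamma(t),\sigma(t))<\epsilon<\operatorname{conv}(M)$ for all $t$, so $d_\infty(\gamma,\sigma)<\operatorname{conv}(M)$. Lemma \ref{homotopicmaps} then gives the homotopy rel endpoints: for each $t$, join $\gamma(t)$ to $\sigma(t)$ by the unique minimal geodesic, and slide along it.

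The only delicate step is the continuity of that straight-line homotopy, which needs the continuous dependence of minimal geodesics on their endpoints inside convex balls (\S\ref{prelims}). Uniform smallness of $d_M(\gamma(t),\sigma(t))$ places each pair in a common convex ball. Compactness of $M$, assumed in this appendix, guarantees $\operatorname{conv}(M)>0$, so $\delta$ is positive and the argument goes through.
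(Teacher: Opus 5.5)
Your proposal is correct and follows the paper's own proof: take the specific approximant $\sigma$ built in the proof of Proposition \ref{density}, which has the same endpoints as $\gamma$ and satisfies $d_\infty(\gamma,\sigma)<\operatorname{conv}(M)$, and then apply Lemma \ref{homotopicmaps}. Your additions are details the paper leaves implicit: the explicit affine parametrization of each segment on $[i/n,(i+1)/n]$, the pointwise distance bound, and the remark that Step 1 of the lemma covers ordinary paths.
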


\begin{proof}
Let $\gamma $ be a path in $M$. By the proof Proposition \ref{density}, there exists a broken geodesic $\sigma$ with the same endpoints satisfying $d_\infty(\gamma, \sigma\big) < \operatorname{conv} (M)$.
By Lemma \ref{homotopicmaps}.
$\gamma$ and $\sigma$ are homotopic relative  to the endpoints.
 \end{proof}

\begin{rem}\rm 
Milnor (\cite{Milnor2}, Chapter 16) considers similar spaces of closed geodesics. These are loops $\gamma$ based at $x_0$, with fixed subdivisions $t_0=0< t_1< \cdots < t_n=1$ such that every restriction $\gamma_{|[t_i,t_{i+1}]}$ is a geodesic. This space is written $\Omega (t_0,\ldots, t_n)$. The union of all these subspaces, over all subdivisions, forms our $\Omega^{bg} (M)$.
\end{rem}

\subsection{Topology of uniform broken geodesics}
Let $(M,g)$ be a Riemannian manifold and $B$ a convex geodesic ball.
For each $(p,q)\in B\times B$ let $\gamma_{pq}:[0,1]\to M$ be the geodesic segment from $p$ to $q$ (i.e. the unique minimizing constant speed 
geodesic from $p$ to $q$).
Then the assignment
\begin{equation}\label{mapGamma}
\Gamma: B\times B \longrightarrow C^0([0,1],M),\qquad (p,q)\longmapsto \gamma_{pq}
\end{equation}
is continuous, where the path space has the compact-open topology, also known as the $C^0$ topology, or equivalently the topology of the sup metric \eqref{dinfinity}. 

Consider again the following subspace of $M^{n+1}$ 
\begin{equation}\label{ddelta}
    D_{\delta}(M,n)=\{(y_0,y_1,\ldots, y_{n})\in M^{n+1} \mid d(y_i, y_{i+1})\leq \delta,\  0\leq i< n\}.
\end{equation}
These are marked paths as in \eqref{markedpaths} which can start anywhere.
As in \eqref{imageonto}, 
there is a map 
$\displaystyle \Psi_n:D_{\delta}(M,n) \longrightarrow \operatorname{Path}(M) $
sending each consecutive pair $(y_i,y_{i+1})$ to the (unique) unit-speed minimal geodesic segment
$\left[0,d_M(y_i,y_{i+1})\right]\rightarrow M$, sending $0\mapsto y_i, d_M(y_i,y_{i+1})\mapsto y_{i+1}$. Concatenating these segments as in \eqref{productpath} and then dividing by the total length $\sum_{0\leq i<n} d_M(y_i,y_{i+1})$ yields a uniform broken geodesic $\gamma_{\bf m}: [0,1]\rightarrow M$ with marking $\bf m$.
When the total length is zero, we take \(\gamma_{\mathbf m}\) to be the constant path.

\begin{lem}\label{continuous} 
The map
$$\displaystyle \Psi_n:D_{\delta}(M,n) \longrightarrow\operatorname{Path}^{ubg}(M)\ \ ,\ \ m=(y_0,y_1,\ldots, y_n)\mapsto \gamma_{\bf m}$$ is continuous in the compact open topology for the path space.
\end{lem}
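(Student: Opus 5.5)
We prove continuity of $\Psi_n$ by factoring it through the map $\Gamma$ of \eqref{mapGamma}, which sends a pair in a convex ball to its geodesic segment and is continuous. Since $\delta\le \operatorname{conv}(M)/4$, any two consecutive points $y_i,y_{i+1}$ with $d_M(y_i,y_{i+1})\le\delta$ lie in the geodesic ball $B(y_i,2\delta)$, which is convex. So locally the assignment $(y_i,y_{i+1})\mapsto\gamma_{y_iy_{i+1}}$, where $\gamma_{y_iy_{i+1}}\colon[0,1]\to M$ is the constant-speed segment, is continuous. More precisely, fix a point $\mathbf m^0=(y^0_0,\ldots,y^0_n)$. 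For $\mathbf m$ near $\mathbf m^0$, each pair $(y_i,y_{i+1})$ lies in the convex ball $B(y^0_i,3\delta)$, since $3\delta<\operatorname{conv}(M)$. Hence the maps $\mathbf m\mapsto\gamma^{(i)}_{\mathbf m}:=\gamma_{y_iy_{i+1}}\in C^0([0,1],M)$ are continuous near $\mathbf m^0$ for each $i$.

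Next we express $\gamma_{\mathbf m}$ explicitly. Put $d_i=d_M(y_i,y_{i+1})$, $L=\sum_i d_i$, and $s_i=(d_0+\cdots+d_{i-1})/L$ when $L>0$. Then
$$\gamma_{\mathbf m}(t)=\gamma^{(i)}_{\mathbf m}\Bigl(\tfrac{t-s_i}{s_{i+1}-s_i}\Bigr)\quad\text{for } t\in[s_i,s_{i+1}].$$
The $d_i$ and $L$ are continuous in $\mathbf m$. We prove continuity in the sup metric $d_\infty$, which induces the compact-open topology.

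The key estimate handles the difficulty that the breakpoints $s_i$ move with $\mathbf m$, and that a segment may degenerate ($d_i\to 0$), so its rescaling factor $1/(s_{i+1}-s_i)$ blows up. The rescaling formula is therefore not uniformly continuous. To avoid it, we bound distances directly. Since $\gamma^{(i)}$ has constant speed $d_i$, the unit-interval path $\gamma_{\mathbf m}$ is $L$-Lipschitz. Let $\mathbf m\to\mathbf m^0$ and fix $t$, with $t\in[s_i,s_{i+1}]$ for $\mathbf m$ and $t\in[s^0_j,s^0_{j+1}]$ for $\mathbf m^0$. We compare $\gamma_{\mathbf m}(t)$ and $\gamma_{\mathbf m^0}(t)$ by passing through the marked points. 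The distance from $\gamma_{\mathbf m}(t)$ to the nearest marked point $y_k$, with $s_k$ close to $t$, is at most $L|t-s_k|$ plus a term controlled by $\gamma_{\mathbf m}$ tracking $\gamma_{\mathbf m^0}$ on nondegenerate segments. On segments with $d^0_i>0$, the factors $1/(s_{i+1}-s_i)$ stay bounded near $\mathbf m^0$, so uniform continuity of $\Gamma$ on compact subsets gives uniform closeness there. On segments with $d^0_i=0$, the corresponding piece of $\gamma_{\mathbf m}$ has length $d_i\to 0$, so it stays within $d_i$ of $y_i$, and $y_i\to y^0_i$. Combining these with the Lipschitz bound on the boundary mismatch $|s_i-s^0_i|\to 0$ gives $d_\infty(\gamma_{\mathbf m},\gamma_{\mathbf m^0})\to 0$.

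The main obstacle is the point $L^0=0$, where $\gamma_{\mathbf m^0}$ is constant at $y^0_0$. There we argue directly: every point of $\gamma_{\mathbf m}$ lies within $L$ of $y_0$, so
$$d_\infty(\gamma_{\mathbf m},\gamma_{\mathbf m^0})\le L+d_M(y_0,y^0_0)\to 0.$$
This completes the plan.
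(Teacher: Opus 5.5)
Your proof is correct, but it takes a different route from the paper.

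\textbf{The paper's route.} It never uses the piecewise affine rescalings with breakpoints $s_i$. It concatenates the arclength-parametrized segments as Moore paths, so pieces are only shifted, never stretched, and it relies on continuity of Moore concatenation. It then passes to the unit interval by the single global rescaling $\gamma_{\mathbf m}(t)=\gamma^M_{\mathbf m}(L(\mathbf m)t)$. The map $(\mathbf m,t)\mapsto\gamma^M_{\mathbf m}(L(\mathbf m)t)$ is jointly continuous because evaluation is continuous. The exponential law for the compact interval $[0,1]$ then gives continuity of $\Psi_n$. The same formula also yields the constant path when $L=0$.

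\textbf{Your route.} You estimate the sup metric directly. Your observation that $\gamma_{\mathbf m}$ is $L$-Lipschitz does the work that continuity of Moore concatenation does in the paper. It absorbs both the breakpoint mismatch and the pieces whose factor $1/(s_{i+1}-s_i)$ blows up.

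\textbf{Comparison.} Your argument is more elementary and makes the dependence on the moving breakpoints explicit. The paper's is shorter because this bookkeeping is hidden inside a standard continuity statement.

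\textbf{Tightening the middle step.} Your middle paragraph is vague; when writing it up, make it an estimate uniform in $t\in[s_i,s_{i+1}]$, split into three cases. Write $\varepsilon_{\mathbf m}=\max_k|s_k-s_k^0|$.
\begin{enumerate}
\item If $d_i^0>0$ and $t\in[s_i^0,s_{i+1}^0]$, let $u,u^0$ be the two rescaled parameters. Then $d_M(\gamma_{\mathbf m}(t),\gamma_{\mathbf m^0}(t))\le\|\gamma^{(i)}_{\mathbf m}-\gamma^{(i)}_{\mathbf m^0}\|_\infty+d_i^0|u-u^0|$, and $|u-u^0|\to 0$ uniformly in $t$ because $s_{i+1}^0-s_i^0>0$.
\item If $d_i^0>0$ and $t\notin[s_i^0,s_{i+1}^0]$, then $t$ is within $\varepsilon_{\mathbf m}$ of both $s_k$ and $s_k^0$ for some $k\in\{i,i+1\}$.
\item If $d_i^0=0$, then $t$ is within $d_i/L+\varepsilon_{\mathbf m}$ of both $s_i$ and $s_i^0=s_{i+1}^0$.
\end{enumerate}
In the last two cases, pass through $y_k$ and $y_k^0$ using the two Lipschitz bounds.

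Two small corrections follow from this. What you need is continuity of $\Gamma$ at one point, together with the Lipschitz bound on $\gamma^{(i)}_{\mathbf m^0}$ to absorb the parameter shift, rather than ``uniform continuity of $\Gamma$ on compact subsets.'' Also, $d_M(\gamma_{\mathbf m}(t),y_k)\le L|t-s_k|$ holds outright, with no additional tracking term.
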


\begin{proof}
Fix
${ m}=(y_0,\ldots,y_n)\in D_\delta(M,n)$ and for each $i=0,\ldots,n-1$, set
$\ell_i({ m})=d_M(y_i,y_{i+1})$.
Write $\sigma_i({ m}):
[0,\ell_i({\bf m})]\longrightarrow M$
be the unique minimizing geodesic from $y_i$ to $y_{i+1}$, parametrized by
arclength. 
Since $
d_M(y_i,y_{i+1})\leq \delta<\operatorname{conv}(M),$
there exists, for each $i$, a geodesically convex open ball $B_i$ containing
both $y_i$ and $y_{i+1}$. After shrinking neighborhoods of the endpoints if
necessary, all nearby pairs
$(y_i',y_{i+1}')$
remain in $B_i$, and in that neighborhood of ${\bf m}$, the map
${ m}'\longmapsto \sigma_i({ m}')$ is continuous. Since concatenation of Moore paths is continuous, the Moore concatenation $\gamma^M_{ m}$ of all these segments is continuous. Its Moore length is $L(\gamma^M_{ m})$.
If $L(\gamma^M_{ m})=0$, the path is constant, otherwise the associated uniformly parametrized broken
geodesic is given by
$$
\gamma_{ m}(t)
=
\gamma^M_{ m}\bigl(L({ m})t\bigr),
\qquad 0\leq t\leq 1.
$$
Consider the adjoint map
\[
D_\delta(M,n)\times[0,1]\longrightarrow M,
\qquad
\tilde\gamma ({ m},t)
=
\gamma_{ m}\bigl(t\bigr)
\]
It is continuous because concatenation and the evaluation map are continuous functions.
Since $[0,1]$ is compact, the exponential law applies for the topological product and the compact-open topology, and implies that the adjoint
${ m}
\longmapsto 
\gamma_{ m}$
is continuous.
\end{proof}

We define the space $$\operatorname{Path}^{ubg}_\delta(M,n):=\Psi_n(D_\delta(M,n))$$ 
In words, this is the space of uniform broken geodesics which have a representation with at most $n$ marked points whose consecutive distances are at most $\delta$. 
Clearly $\operatorname{Path}^{ubg}_\delta(M,0)=M$, and $\operatorname{Path}^{ubg}_\delta(M,1)$ is a closed disk bundle over $M$.

\begin{cor}\label{compact}
Let $M$ be a compact Riemannian manifold. Then both $\operatorname{Path}^{ubg}_\delta(M,n)$ and $\Omega^{ubg}_\delta (M,n)$ are compact subspaces
of $\operatorname{Path}(M)$ and $\Omega M$ respectively.
\end{cor}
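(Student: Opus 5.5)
The plan is to realize $\operatorname{Path}^{ubg}_\delta(M,n)$ as the continuous image of a compact space and then restrict to loops. By Lemma \ref{continuous}, $\Psi_n: D_\delta(M,n)\to \operatorname{Path}(M)$ is continuous in the compact-open topology, and by definition $\operatorname{Path}^{ubg}_\delta(M,n)=\Psi_n(D_\delta(M,n))$. It therefore suffices to show that $D_\delta(M,n)$ is compact. Since $M$ is compact, $M^{n+1}$ is compact. The defining conditions $d_M(y_i,y_{i+1})\le\delta$ are closed conditions, because $d_M:M\times M\to\R$ is continuous. Hence $D_\delta(M,n)$ is a closed subset of a compact space, so it is compact, and its image under $\Psi_n$ is compact. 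One point needs checking: tuples with fewer than $n$ marked points must be covered. Repeating entries does not change the uniform broken geodesic, since zero-length segments are discarded. So every path with at most $n$ markings is the image of some $(n+1)$-tuple, and the image is exactly the advertised space.

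For loops, I would identify $\Omega^{ubg}_\delta(M,n)$, in the present non-Moore setting, with $\Psi_n(D^0_\delta(M,n))$. Here $D^0_\delta(M,n)\subset D_\delta(M,n)$ is the subset with $y_0=y_n=x_0$. This is again closed in $D_\delta(M,n)$, being the preimage of $(x_0,x_0)$ under the continuous projection to the first and last coordinates, hence compact. Its image under $\Psi_n$ is then compact in $\operatorname{Path}(M)$ and contained in $\Omega M$. Because $\Omega M$ carries the subspace topology, compactness persists there.

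The only real obstacle is to make sure Lemma \ref{continuous} applies on all of $D_\delta(M,n)$, including at tuples of total length zero, where the normalization by $L$ degenerates. The proof of that lemma treats this case by declaring the path constant. Near a zero-length tuple, every segment is short and lies in a small ball around $y_0$, so the rescaled path is uniformly close to the constant path, and continuity holds. I would add this one-line uniform estimate if needed; otherwise the argument is a direct compactness transfer.

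If one instead wants the Moore version used in Lemma \ref{sigmacompact}, I would run the same argument with the map $H_n\to\Omega^{ubg}_\delta(M,n)$ from \eqref{imageonto}. That map is continuous, and its domain is a quotient of the compact space $\coprod_{k\le n}\widetilde G_k$. Continuity of the Moore length $r=\sum d_M(x_i,x_{i+1})$ is clear.
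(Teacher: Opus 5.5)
Your proposal is correct and follows the paper's proof. $D_\delta(M,n)$ is a closed, hence compact, subset of $M^{n+1}$, and its image under the continuous map $\Psi_n$ of Lemma \ref{continuous} is $\operatorname{Path}^{ubg}_\delta(M,n)$; restricting to the closed subset of tuples with $y_0=y_n=x_0$ gives compactness of $\Omega^{ubg}_\delta(M,n)$. Your extra checks, namely continuity of $\Psi_n$ at zero-length tuples and the Moore variant via the compact quotient $\coprod_{k\le n}\widetilde G_k\to H_n$, are sound refinements that the paper leaves implicit.
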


\begin{proof}  
The space $D_{\delta}(M,n)$ is a compact subspace of $M^{n+1}$. Its image under $\Psi_n$ in $\operatorname{Path}(M) $ is precisely
$\operatorname{Path}^{ubg}_\delta(M,n)$. Since the image of a compact set by a continuous map is compact, it follows that  its image, which is $\operatorname{Path}^{ubg}_\delta(M,n)$ is compact. Similalry, the image of the subspace of $D_\delta(M,n)$ of all tuples $(x_0,\ldots, x_n)$ with $x_n=x_0$ is $\Omega_\delta^{ubg}(M,n)$ and that too is compact.
\end{proof}

\section{The $H$-space structure on $H_{\infty}$}\label{hspace}

We have defined in \S\ref{marked} the $H$-space model
$H_\infty=\operatorname*{colim}_{n}H_n$
with the ordinary colimit topology, together with compatible continuous maps
\[
\mu_{n,m}:H_n\times H_m\longrightarrow H_{n+m}.
\]
These maps define a set-theoretic associative multiplication $\mu$ on $H_\infty$.
From the continuity of all the maps \(\mu_{n,m}\), one automatically obtains a continuous map
\[
\operatorname*{colim}_{n,m}(H_n\times H_m)
\longrightarrow H_\infty.
\]
However, in the category \(\mathbf{Top}\), one cannot in general identify the lef hand colimit
with
$\displaystyle
\left(\operatorname*{colim}_nH_n\right)
\times
\left(\operatorname*{colim}_mH_m\right).$
This is however true in the category \(k\mathbf{Top}\) as we now explain. 

For convenience, we briefly summarize the relevant features of the category $kTop$ of compactly generated spaces. A standard reference for the subject is \cite{strickland} to which we refer for the definition of $kTop$. 
For the following properties, we refer to \cite{ghazel,strickland}.

\begin{enumerate}[(i)]
\item $kTop$ is a coreflective subcategory of Top; this means that it is a full subcategory of Top and the inclusion functor $J:kTop \to Top$ has a right adjoint denoted as $k$ and is called a coreflector.
\item $kTop$ contains  the locally compact Hausdorff spaces and CW-complexes. All metric spaces are in $kTop$, in particular, manifolds and their loop spaces are CG.
\item All sequential spaces are in $kTop$. In particular,   first countable spaces  and metric spaces are CG. Moore loop spaces and path spaces of metric spaces are  CG.
\item Just as $Top$, the category $kTop$ is complete and cocomplete. Therefore it has products, pullbacks, coproducts, pushouts, equalizers and coequalizers.
\item Given any functor defined $F:I \to kTop $ with $I$ small. Its colimit is precisely the colimit of the composite $JF$. 
 As an example, let $X$ and $Y$ be two CG spaces. Let $X\times Y$  and $X\times_{k}Y$ denote their products $Top$ and $kTop$ respectively. Then
$$X\times_{k}Y \cong  k(X\times Y).$$ 
\item The category $kTop$ is cartesian closed. 
As a very important consequence, given any CG space $X$, the functor 
$X\times_{k}.: kTop \longrightarrow kTop$
commutes with colimits.\\
\end{enumerate}

The relevance of $kTop$ to this work is now presented.
The order relation on $\N$ induces a category structure on $\N$ in the standard way. For $n\in \N$, let $H_n$ be as defined in (\ref{Hn}) and  let   $$H: \N  \lra  kTop $$   be the functor  given by $H(n)= H_n$  and let $H_{\infty}\cong colim H.$  
Define a functor 
\begin{equation}
 F: \N \times \N \lra  kTop 
\end{equation}
  by $F((n,m))= H_n \times_k H_m$. 
The category $kTop$ being cartesian closed, for a fixed $n\in \N$, the partial functor 
 $F(n,.): \N \lra kTop$  has a colimit and $$\operatorname{colim} F(n,.)\cong H_n \times_k H_{\infty}.$$ 
Define a functor $\hat{F}:\N \lra kTop$ by $\hat{F}(n)=H_n \times_k H_{\infty}$. Again, $kTop$ is cartesian closed, therefore  $\hat{F}$ has a colimit and $\operatorname{colim} \hat{F}\cong H_{\infty} \times H_{\infty}$. By the Fubini's theorem \cite[Proposition, page 230]{Mac}, the functor $F$ itself has a colimit and $\operatorname{colim} F\cong H_{\infty} \times_k H_{\infty}$.  By \cite[Proposition 2.17.]{strickland}, there are continuous maps $$\mu_{n,m}:H_n \times_k H_m \lra H_{n+m}$$ rendering the diagram (\ref{Hmap}) commutative. The composite maps
$$\xymatrix{H_n \times_k H_m \ar[rr]^{\mu_{n,m}}&& H_{n+m} \ar[r]&  H_{\infty}} $$ 
define a cone $F \Rightarrow H_{\infty}$. This cone induces a map $\mu: H_{\infty} \times_k H_{\infty} \lra H_{\infty}$ which makes \(H_\infty\) a monoid object in \(k\mathbf{Top}\).


\section{The Holonomy of the Evaluation Map}\label{holonomyappendix}

This section is needed for the proof of Theorem \ref{XtoBG}, and so all loops and paths are ordinary (i.e. non-Moore). Let $X$ be a connected simplicial complex with a fixed $0$-cell $x_0$ (the basepoint). Let $M$ be a based space, $\bmap$ and $\map$ be the based and free mapping space functors, respectively. Evaluating a map $f$ at the basepoint $ev(f)=f(x_0)$ gives a fibration sequence
$$\xymatrix{\bmap (X,M)\ar[r]& \map (X,M)\ar[r]^-{ev}&M}$$
with holonomy \eqref{holonomy}
\begin{equation}\label{holonomyeval}
\Omega M\times\bmap (X,M)\lrar\bmap (X,M)
\end{equation}
Since $x_0$ is non-degenerate, there exists a small open neighborhood $V$ of it such that  $V$ is homeomorphic to the open cone of the boundary $\partial V$ of $V$. The neighborhood $V$ can be chosen to be a disk if $X$ is a surface, in which case  $\partial V$ is a circle. For general $X$, $\partial V$ may not be a circle.  Collapse that cone to a segment, then identify the endpoints of this segment. Doing this gives a coaction map $c: X\rightarrow S^1\vee X$. It turns out that holonomy for \eqref{holonomyeval} is simply precomposing maps with the coaction.

\begin{prop}\label{holeval}
The map
\begin{eqnarray*}
  \bmap (X\vee S^1,M) = \Omega M\times\bmap (X,M)\lrar\bmap (X,M)
\end{eqnarray*} 
which sends $(\gamma, f)\longmapsto (\gamma \vee f)\circ c$
is a choice of a holonomy for the evaluation fibration \eqref{holonomyeval} .
\end{prop}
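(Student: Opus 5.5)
The plan is to check the defining property of a holonomy directly: build a continuous lift of loops to paths in $\map(X,M)$ whose terminal value is $(\gamma\vee f)\circ c$. I use the definition \eqref{holonomy}, namely that a holonomy is the endpoint of a continuous choice of lifts of loops starting at a point of the fiber. So it suffices to produce a continuous map
\[
\Lambda:\Omega M\times\bmap(X,M)\times I\longrightarrow \map(X,M),
\]
with three properties: $\Lambda(\gamma,f,0)=f$; $ev\circ\Lambda(\gamma,f,t)=\gamma(t)$ (with the appropriate orientation convention); and $\Lambda(\gamma,f,1)=(\gamma\vee f)\circ c$. Any two holonomies obtained from continuous lifting choices are homotopic, so exhibiting one such family identifies the holonomy up to homotopy, as claimed.

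To build $\Lambda$, I parametrize the cone neighbourhood $V\cong (\partial V\times[0,1))/(\partial V\times\{0\})$, with radial coordinate $s$ and $s=0$ the basepoint $x_0$. The coaction $c$ is then, up to homotopy rel $X\setminus V$, the map sending the point of radius $s\in[0,1/2]$ to the point of parameter $1-2s$ on the circle (reading the loop backwards from the basepoint). It sends the region of radius $s\ge 1/2$ to $X$ by radially reparametrizing $V_{\ge1/2}$ onto $V$, and it is the identity outside $V$. For fixed $t$, I define $\Lambda(\gamma,f,t)$ by the same recipe with the circle replaced by the path $\gamma|_{[0,t]}$ (or its reverse, fixed by the orientation convention). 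Concretely, the point of radius $s\le 1/2$ goes to $\gamma(t(1-2s))$, the points of radius $s\ge1/2$ go to $f$ of the radially rescaled point, and outside $V$ the map is $f$. At $s=1/2$ the first formula gives $\gamma(0)=x_0=f(x_0)$, which matches the second, so the pieces glue. At $t=0$ the collar is mapped to $x_0$, and I insert a standard radial homotopy to arrange that this equals $f$ exactly (alternatively, I first replace $c$ by the homotopic coaction map and note that $\Lambda(-,-,0)$ is homotopic to the identity through based maps, which suffices). Evaluation at the cone point $s=0$ gives $\gamma(t)$, so $\Lambda$ lifts $\gamma$. At $t=1$ the recipe is literally $(\gamma\vee f)\circ c$.

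For continuity I use the exponential law. $X$ is a compact CW complex, hence locally compact Hausdorff, so $\Lambda$ is continuous iff its adjoint $\Omega M\times\bmap(X,M)\times I\times X\to M$ is. The adjoint is assembled from evaluation maps on the two closed pieces $\{s\le1/2\}$ and $\{s\ge1/2\}\cup(X\setminus V)$, which agree on the overlap, so the pasting lemma applies.

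The main obstacle is the bookkeeping at $t=0$. The naive lift starts at $f$ precomposed with the collar-collapse, not at $f$ itself, and the sign and orientation conventions must match the holonomy convention of \eqref{holonomy} (lift starting at $f$, loop read as $\gamma$ or $\gamma^{-1}$). The fix I would try first is to concatenate, for each $f$, the canonical homotopy from $f$ to $f\circ(\text{collar collapse})$, which lies in the fiber over $x_0$, with the lift constructed above. The concatenation is still a continuous lift of $\gamma$ (after reparametrizing $\gamma$ to be constant on an initial interval, which gives a homotopic holonomy), and its endpoint is exactly $(\gamma\vee f)\circ c$.
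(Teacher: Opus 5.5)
Your proposal is correct and is essentially the paper's argument. The paper also builds the lift by precomposing $\gamma\vee f$ with a map $k\colon I\times X\to S^1\vee X$, obtained from a retraction of $I\times X$ onto $\{0\}\times X\cup I\times\{x_0\}$; you have written such a map out explicitly in cone coordinates, which makes literal the identification of the time-one map with $c$ that the paper only asserts. Your repair at $t=0$ (concatenating with the fibrewise homotopy to the collar collapse and reparametrizing $\gamma$) is valid but avoidable: let the collar width grow with $t$, sending radius $s\le t/2$ to $\gamma(t-2s)$ and rescaling radii in $[t/2,1)$ onto $[0,1)$. That lift starts exactly at $f$, ends exactly at $(\gamma\vee f)\circ c$, and is precisely a concrete choice of the paper's retraction.
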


\begin{proof}
Consider a general fibration $\pi : E \lrar B$ over a pointed path-connected base space $B$ with base point $b_0$, $F$ be the fiber of $\pi$ over $b_0$, and define 
  \begin{equation}\label{eq1}
  \tilde{E}=\{\gamma \in Map(I,E)\mid \gamma(0) \text{ and } \gamma(1) \text{ are both in } F\}. 
  \end{equation}
The projection $\pi$ induces a map $\tilde{\pi }:\tilde{E} \rightarrow \Omega B$.
 Let 
$ev_0, ev_1: \tilde{E} \rightarrow F$
be the evaluation map at $0$ and $1$ respectively. By the homotopy lifting property, the map \begin{equation}\label{eq20}
   (\tilde{\pi},ev_0): \tilde{E} \lrar \Omega B \times F  
  \end{equation} 
has a right inverse $
L: \Omega B \times F \mapsto \tilde{E}$
which is unique up to homotopy. The holonomy $h$ of the fibration $\pi$ can be regarded to be the composite map
$\xymatrix{\Omega B\times F \ar[r]^{\ \ \ L}&  \tilde{E}\ar[r]^{ev_1}& F}$.

Back to the space $X$, we know that $I\times X$ is a CW-complex containing $\{0\}\times X\cup I \times \{x_0\}$ as a subcomplex and a strong deformation retract. Construct the map
\[
k:I\times X\to S^1\vee X
\]
by retracting \(I\times X\) onto \(\{0\}\times X\cup I\times \{x_0\}\) and then identifying
\((0,x_0)\sim (1,x_0)\).
Write again
\begin{eqnarray*}
\widetilde{\map}(X,M)
&=&
\{\gamma \in \map (I, \map (X,M)) \mid \gamma(0) \text{ and } \gamma(1) \text{ are in } Map^*(X,M)\}
\end{eqnarray*}
 Precomposition with \(k\),
after identifying
$\map (I, \map (X,M))$ with 
$\map (I\times X,M)$, yields a map
\[
\bmap (S^1\vee X,M)\to \widetilde{\map}(X,M) 
\]
Identifying the source with
$
\Omega M\times \bmap (X,M)$ yields the lift map
\[
L: \Omega M\times \bmap (X,M)\to \widetilde{\map}(X,M) 
\]
which is  a right inverse of the map
$(\tilde{\pi}, ev_0):\widetilde{Map}(X,M) \rightarrow \Omega M \times Map^*(X, M)$
as in \eqref{eq20}. 
The composition of $L$ with the induced endpoint map $ev_1$is precisely
\[
(\gamma,f)\longmapsto (\gamma\vee f)\circ c
\]
which is precisely our claim. This is our lift of the loop \(\gamma\) in \(M\) starting at \(f\), and its endpoint is  \((\gamma\vee f)\circ c\). Therefore the map
\[
\Omega M\times \map^*(X,M)\to \map^*(X,M),
\qquad
(\gamma,f)\mapsto (\gamma\vee f)\circ c,
\]
is a choice of holonomy for the evaluation fibration.
\end{proof}

The important deduction we need for the proof of Theorem \ref{XtoBG} is that the restriction of holonomy to the one-cells of the one-skeleton is simply conjugation, and this is the case since the coaction \(c\) is the usual ``pull a loop out of the wedge point and glue it back'' construction, thus induces conjugation on each circle summand. This is recorded in the Lemma below.

\begin{lem}\label{one-skeleton-conjugation}
For every \(\gamma\in \Omega M\), $f\in\bmap (X,M)$, the restriction of the holonomy image of $f$, 
$
(\gamma\vee f)\circ c:X\to M
$
(notation as in Proposition \ref{holeval})
to the one-skeleton \(X^{(1)}\) corresponds to the \(n\)-tuple
\[
(f_1^\gamma,\dots,f_n^\gamma),
\qquad
f_i^\gamma:=\gamma^{-1} * f_i * \gamma.
\]
where
$f_i=f|_{S^1_i}\in \Omega M$ is the restriction of $f$ to the $i$-th wedge summand of \(X^{(1)}=\bigvee_{i=1}^n S^1\).
\end{lem}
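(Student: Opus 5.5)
The plan is to compute the map $(\gamma\vee f)\circ c$ directly on each circle summand $S^1_i\subset X^{(1)}=\bigvee_{i=1}^n S^1$, using the explicit description of the coaction $c\colon X\to S^1\vee X$ from Proposition \ref{holeval}.

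First I would make the neighborhood $V$ of the basepoint $x_0$ concrete on the one-skeleton. Since $x_0$ is the unique $0$-cell, $V\cap X^{(1)}$ is the open star of $x_0$ in the graph. It is a union of $2n$ half-open arcs, one germ at each end of each loop $S^1_i$. In each circle $S^1_i$, parametrized as $[0,1]/\{0,1\}$, these are small arcs $[0,\epsilon)$ and $(1-\epsilon,1]$. The coaction collapses the cone $V$ onto a segment and then identifies the segment's endpoints to form the new $S^1$. Concretely, the point at cone-height $s$ is sent to the point of the extra circle at parameter $1-s$ (or $s$, by orientation convention), and $x_0$ is sent to the wedge point. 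I would first fix this convention once and for all. Then I would check that on $S^1_i$ the composite $c|_{S^1_i}$ is the map $S^1\to S^1\vee S^1_i$ that traverses the extra circle backward on $[0,\epsilon]$, runs through the reparametrized $S^1_i$ on $[\epsilon,1-\epsilon]$, and traverses the extra circle forward on $[1-\epsilon,1]$. The complement $X\setminus V$ maps identically up to a reparametrization that stretches the arcs.

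Composing with $\gamma\vee f$ then shows that $((\gamma\vee f)\circ c)|_{S^1_i}$ is, up to reparametrization, the loop $\gamma^{-1}* f_i*\gamma$, with the paper's left-to-right convention $\ell_1*\ell_2$ (first $\ell_1$, then $\ell_2$). One has to match the direction: the holonomy transports along $\gamma$, and the lifted family of maps drags the basepoint along $\gamma$. The endpoint must therefore be read from the new basepoint back along $\gamma$, then around $f_i$, then forward along $\gamma$. This agrees with the formula $f_i^\gamma=\gamma^{-1}*f_i*\gamma$ and with the convention $h_{\mathrm{ass}}(\gamma,z)=a(\psi(\gamma)^{-1},z)$ used in the proof of Theorem \ref{XtoBG}.

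The only real subtlety is that for non-Moore loops the product is defined only up to reparametrization. The restriction is therefore the $n$-tuple $(f_i^\gamma)$ only after composing with a fixed piecewise-linear reparametrization of $[0,1]$ that is independent of $f$ and $\gamma$. I would handle this in one of two ways. Either I choose the retraction $k\colon I\times X\to\{0\}\times X\cup I\times\{x_0\}$ in the proof of Proposition \ref{holeval} so that on each $I\times S^1_i$ it is the standard retraction of a square onto three sides, which produces exactly the standard threefold concatenation $\gamma^{-1}*f_i*\gamma$. Or I state the lemma up to a canonical reparametrization homotopy, which is natural in $(\gamma,f)$ and suffices for the use in Theorem \ref{XtoBG}.

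I expect this bookkeeping of parametrization and orientation to be the main obstacle, rather than any homotopy-theoretic issue.
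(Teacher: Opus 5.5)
Your proposal is correct and follows the same route as the paper. Like the paper, you read off $c|_{S^1_i}$ from the cone-collapse construction as ``extra loop backwards, then $S^1_i$, then extra loop forwards'' and compose with $\gamma\vee f$ to get $\gamma^{-1}*f_i*\gamma$. Your explicit handling of the orientation convention and the non-Moore reparametrization (including the option of reading the answer off the square retraction $k$ in the proof of Proposition \ref{holeval}) makes precise what the paper's short argument leaves implicit.
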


\begin{proof}
By construction, the coaction
$c:X\to X\vee S^1$
is obtained by collapsing a small cone neighborhood of the basepoint \(x_0\) to a segment and then identifying the two endpoints of that segment. When restricted to the one-skeleton \(X^{(1)}\), this operation inserts the extra circle factor \(S^1\) at the wedge point of each summand \(S^1_i\). Thus the restriction
\[
c|_{X^{(1)}}:X^{(1)}\longrightarrow X^{(1)}\vee S^1
\]
acts on each circle \(S^1_i\) by first traversing the inverse of the added loop \(S^1\), then   \(S^1_i\) , and then the added loop. Equivalently, on the \(i\)-th summand it is the standard coaction representing conjugation.
\end{proof}

\bibliographystyle{amsalpha}

\begin{thebibliography}{99}

\bibitem{Abbas} A.~Bahri, {\em  Recent progress on conformal geometry}, Imperial College Press, Advanced Texts in Math. Vol.1 (2007).


\bibitem{bc} A~Bahri, F.~Cohen, \textit{On small geodesics and free loop spaces}, ArXiv:0806.0637v1.

\bibitem{bauer} U. Bauer, M. Kerber, F. Roll, A. Rolle, \textit{A unified view on the functorial nerve theorem and its variations}, 
Expo. Math. {\bf 41} (2023), no. 4, Paper No. 125503, 52 pp.

\bibitem{berger} M. Berger, \textit{A panoramic view of Riemannian geometry}, Springer-Verlag, Ed 2 (2007).

\bibitem{bott}
R. Bott and H. Samelson, \emph{On the Pontryagin product in spaces of paths}, Comment. Math. Helv. {\bf 27} (1953), 320–337.

\bibitem{burge} J. B\"urge, \textit{Neural Geodesic Flows}, Master’s Thesis, ETH Zurich (2025). 






\bibitem{docarmo} M. Do Carmo, \textit{Riemannian Geometry}, Birkhäuser, Boston, 1992.





\bibitem{pawel} P. Gajer, \textit{The intersection Dold-Thom theorem}, Topology {\bf 35}, Issue 4 (1996), 939--967.


\bibitem{ganea} T. Ganea, \textit{Induced fibrations and
  cofibrations}, Trans. Amer. Math. Soc. {\bf 127} (1967), 442--459.
  
\bibitem{ghazel} M. Ghazel, \textit{Kan extendable subcategories and fibrewise topology}, Advances in Pure and Applied Mathematics {\bf 15} (2024), 21--83. 
  
\bibitem{gk} M. Ghazel, S. Kallel, \textit{Thin loop spaces}, Eur. J. Math. 6 (2020), no. 3, 864--887. 

\bibitem{grusal} K. Gruher, P. Salvatore, \textit{Generalized string topology operations}, Proceedings of LMS {\bf 96} (1), (2008), 78--106.


\bibitem{hess} K. Hess, A. Tonks, \textit{The loop group and the cobar construction}, 
Proceedings of the American Mathematical Society
{\bf 138}, No. 5 (2010), 1861--1876.

\bibitem{hilton} P. Hilton, \textit{Homotopy theory and duality},
  Gordon and Breach Science Publishers 1965.


\bibitem{hu} S. T. Hu, \textit{Introduction To General Topology}, Holden-Day series in Mathematics (1966).





\bibitem{James} I.M. James, \textit{Reduced Product Spaces},   Annals of Mathematics, 2nd Ser. {\bf 62}, No.1 (1955), 170--197. 

\bibitem{kss} J. Klein, C. Schochet, and S. Smith, \textit{Continuous trace C*-algebras, gauge groups and rational homotopy}, Journal of Topology and Analysis {\bf 1}(3) (2009), 261--288.

\bibitem{lellig} J. Lillig, \textit{A union theorem for cofibrations}, Arch. Math {\bf 24} (1973), 410–415.

\bibitem{ali} A. Maalaoui, V. Martino,
 \textit{The topology of a subspace of the Legendrian
curves on a closed contact 3-manifold}, Advanced Nonlinear Studies, {\bf  14}:2 (2014) 393-426.

\bibitem{Mac} Mac Lane, S.: Categories for the working mathematician, 2nd ed. Grad. Texts in Math. {\bf 5}, Springer (1998).

\bibitem{may} P. May, \textit{Classifying Spaces and Fibrations}, Memoirs of the American Mathematical Society {\bf 155} (1975).

\bibitem{Milnor} J.~Milnor, {\em Construction of universal bundles, I}, Ann. of Math.,
(2)\textbf{63}(1956), 272--284.

\bibitem{Milnor2} J.~Milnor, {\em Morse Theory}, Princeton U. Press. 1963.

\bibitem{neisendorfer} J. Neisendorfer, \textit{Algebraic Methods in Unstable Homotopy Theory}, Cambridge University Press, 2010. 


\bibitem{oancea} A. Oancea, \textit{Morse theory, closed geodesics, and the homology of free loop spaces}, IRMA Lect. Math. Theor. Phys. {\bf 24}, European Mathematical Society (EMS), Z\"urich (2015), 67–109.


\bibitem{piacenza} R.J. Piacenza, \textit{Homotopy theory of diagrams and CW-complexes over a category}, Can. J. Math.Vol. {\bf 43} (4) (1991), 814-824.

\bibitem{schon}  R.~Sch{\"o}n, \textit{Fibrations over a CWh-Base}, Proc. Amer. Math. Soc. {\bf 62} No. 1 (1977),
165-166.

\bibitem{smale} S. Smale, \textit{Regular curves on Riemannian manifolds}, Trans. Amer. Math.
Soc. {\bf 87}, 1958, 492--512.

\bibitem{spanier} 
{E. H. Spanier}
{Algebraic Topology},
{McGraw-Hill},
{New York} (1966).
\bibitem{stasheff} J. Stasheff, \textit{Parallel transport and classification of fibrations}, Algebraic and Geometrical Methods in Topology, Springer Lecture Notes in Math. {\bf 428} (1974), 1--17. 
\bibitem{stasheff2} J. Stasheff, \textit{A classification theorem for fibre spaces}, Topology {\bf 2} (1963), 239–246.
\bibitem{strickland}  N.P. Strickland, \textit{The category of CGWH spaces}, https://neil-strickland.staff.shef.ac.uk/courses/
homotopy/cgwh.pdf

\end{thebibliography}

\end{document}